\newcommand{\R}{{\mathbb R}}
\newcommand{\Rp}{{\mathbb R^+}}
\newcommand{\sgn}{\mbox{\rm sgn}}
\newcommand{\N}{{\mathbb N}}
\newcommand{\LL}{{\mathbb L}}
\newcommand{\s}{{\bigcup_{\beta<1}{\cal S}^\beta}}
\newcommand{\h}{{\bigcup_{\beta<1}{\cal H}^\beta}}
\newcommand{\FF}{{\cal F}}
\newcommand{\HH}{{\cal H}}
\newcommand{\arrowp}{\mathop{\longrightarrow}_{P}}
\newcommand{\Rd}{{{{\mathbb R}^d}}}
\newtheorem{theorem}{\bf Theorem}[subsection]
\newtheorem{proposition}[theorem]{\bf Proposition}
\newtheorem{corollary}[theorem]{\bf Corollary}
\newtheorem{remark}[theorem]{\bf Remark}
\newenvironment{proof}{{\sc Proof.}}{\hfill $\Box$}
\newcommand{\nsubsection}{\setcounter{equation}{0}\subsection}
\begin{document}
\title{$\LL^p$ solutions of reflected BSDEs under
monotonicity condition}
\author{Andrzej Rozkosz and Leszek
S\l omi\'nski\footnote{Corresponding author.
Tel.: +48-566112954; fax: +48-566112987.}
\mbox{}\\[2mm]
{\small  Faculty of Mathematics and Computer Science,
Nicolaus Copernicus University},\\
{\small Chopina 12/18, 87-100 Toru\'n, Poland}}
\date{}
\maketitle
\begin{abstract}
We prove existence and  uniqueness of $\LL^p$ solutions,
$p\in[1,2]$, of reflected backward stochastic differential
equations with $p$-integrable data and generators satisfying the
monotonicity condition. We also show that the solution may be
approximated by the penalization method.  Our results are new even
in the classical case $p=2$.
\end{abstract}
{\em Keywords:} reflected backward stochastic differential
equations, monotonicity condition, $p$-integrable data.

\footnotetext{{\em Email addresses:} rozkosz@mat.uni.torun.pl (A.
Rozkosz), leszeks@mat.uni.torun.pl (L. S\l omi\'nski). }

\nsubsection{Introduction}

Nonlinear backward stochastic differential equations (BSDEs) were
considered for the first time by Pardoux and Peng \cite{pp}. In
the paper \cite{ekppq} by El Karoui et al. the so called reflected
BSDEs (RBSDEs) were introduced. By a solution of the RBSDE with
terminal value $\xi$, generator
$f:[0,T]\times\Omega\times\R\times\Rd\to\R$ and obstacle
$L=\{L_t,t\in[0,T]\}$ we understand a triple $(Y,Z,K)$ of
$(\FF_t)$ adapted processes such that
\begin{equation}
\label{eq1.1} \left\{
\begin{array}{l}
Y_t=\xi+\int^T_tf(s,Y_s,Z_s)\,ds -\int^T_tZ_s\,dW_s +K_T-K_t,\quad
t\in[0,T],\medskip\\
Y_t\ge L_t,\quad t\in[0,T],\medskip \\
K\mbox{ is nondecreasing, continuous, }K_0=0,\,\,
\int^T_0(Y_t-L_t)\,dK_t=0,
\end{array}
\right.
\end{equation}
where  $W$ is a standard $d$-dimensional Wiener process and
$(\FF_t)$ is the standard augmentation of the natural filtration
generated by $W$. It is assumed here that $\xi$ is $\FF_T$
measurable and $L$ is an $(\FF_t)$ progressively measurable
continuous process such that $L_T\le\xi$ a.s. Condition in
(\ref{eq1.1})${}_2$ says that the first component $Y$ of the
solution is forced to stay above $L$. The role of $K$ is to push
$Y$ upwards in order to keep it above $L$. We also require that
$K$ be minimal in the sense of (\ref{eq1.1})${}_3$, i.e. $K$
increases only when $Y=L$. Note that usual BSDEs may be considered
as special case of RBSDEs with $L\equiv-\infty$ (and $K\equiv0$).

In \cite{pp} it is proved  that if $\xi\in\LL^2,\quad
\int_0^T(f(s,0,0))^2\,ds\in\LL^1$ and $f$ is Lipschitz continuous
in both variables $y,z$ then there exists a unique solution
$(Y,Z)$ of BSDE with data $\xi,f$ such that $Y\in{\cal S}^2$,
$Z\in\HH^2$, i.e. $Y$ is continuous and adapted, $Z$ is
progressively measurable, and $Y^*_T\in\LL^2$, $
(\int^T_0|Z_t|^2\,dt)^{1/2}\in\LL^2$ (here and later on we use the
notation $X^*_t=\sup_{s\leq t}X_s$, $t\in[0,T]$). In \cite{ekppq}
existence and uniqueness of a solution $(Y,Z,K)$ of (\ref{eq1.1})
such that $Y,K\in{\cal S}^2$, $Z\in\HH^2$ is proved under the
additional assumption that $L^+=\max(L,0)\in{\cal S}^2$.

The assumptions on the data  in \cite{ekppq,pp} are sometimes too
strong for applications (see, e.g., \cite{de,ekpq} for
applications in economics and finance and \cite{bc,rs} for
applications to PDEs). Therefore many attempts have been made to
weaken the integrability conditions imposed in \cite{ekppq,pp} on
$\xi$ and $f$ or weaken the assumption that $f$ is Lipschitz
continuous. For instance, Briand and Carmona \cite{bc} and Pardoux
\cite{pa}  consider square-integrable solutions (i.e. $Y\in{\cal
S}^2$, $Z\in\HH^2)$ of BSDEs with generators which are Lipschitz
continuous with respect to $z$ while with respect to $y$ are
continuous and satisfy the monotonicity condition and the general
growth condition of the form $|f(t,y,z)|\le
|f(t,0,z)|+\varphi(|y|)$. In $\cite{bc}$ $\varphi$ is a polynom,
whereas in \cite{pa} an arbitrary positive continuous increasing
function. In El Karoui et. al. \cite{ekpq}  conditions ensuring
existence and uniqueness of $\LL^p$ solutions (i.e. $Y,Z\in{\cal
S}^p$, $Z\in\HH^p)$ for $p>1$ of BSDEs with Lipschitz continuous
generator with respect to both $y$ and $z$ are given. The
strongest results in this direction are given in Briand et al.
\cite{bdhps}, where $\LL^p$ solutions of BSDEs for $p\in[1,2]$ are
considered. It is proved there that in  case $p\in(1,2]$ if
\[
\xi\in\LL^p,\quad \int^T_0|f(s,0,0)|\,ds\in\LL^p,\quad
\forall_{r>0}\,\int_0^T\sup_{|y|\leq r}|f(s,y,0)-f(s,0,0)|\,ds
<+\infty
\]
and $f$ is Lipschitz continuous in $z$ and  continuous and
monotone in $y$  then there exists a unique $\LL^p$ solution.
Similar result is proved for $p=1$ in case $f$ does not depend on
$z$ and in the general case under some additional assumption
(assumption (H5) in Section \ref{sec5}). Finally, let us mention
that many papers are devoted  to BSDEs with quadratic growth
generators in $z$ (see, e.g., \cite{K} and the references given
there).

In Matoussi \cite{M} existence of square-integrable solutions of
RBSDEs with continuous generators satisfying the linear growth
condition is proved. Square-integrable solutions of RBSDEs under
monotonicity and the general  growth condition with respect to $y$
were considered by Lepeltier et al. in \cite{lmx}. In Hamad\`ene
and Popier \cite{hp} existence and uniqueness of $\LL^p$ solutions
of RBSDEs is proved in case $p\in(1,2)$ for $\xi\in\LL^p$,
$L^+\in{\cal S}^p$ and generators which are Lipschitz continuous
in $y$ and $z$ and satisfy the condition
$\int_0^T|f(s,0,0)|\,ds\in\LL^p$. Similar result for generators
satisfying the monotonicity condition and the linear growth
condition with respect to $y$ is proved in Aman \cite{A}. $\LL^1$
solutions of some generalized Markov type RBSDEs with random
terminal time are considered in \cite{rs}.

In the present paper we study $\LL^p$ solutions of RBSDEs of the
form (\ref{eq1.1}) for $p\in[1,2]$. Our main theorems on existence
and uniqueness of solutions may be summarized by saying that if
$\xi,f$ satisfy assumptions from \cite{bdhps} and the obstacle $L$
satisfies the assumptions
\[
L^{+,*}_T\in\LL^p,\quad\int_0^T|f(s,L^{+,*}_s,0)|ds\in\LL^p,
\]
then there exists a unique $\LL^p$ solution of (\ref{eq1.1}). It
is worth noting that as in \cite{bdhps} we do not assume that $f$
satisfies the general growth condition in $y$. Therefore our
results strengthen known results  proved in \cite{A,lmx} even in
the classical case $p=2$ (see Remark 4.4) and results proved in
\cite{A,hp} in case $p\in(1,2)$. We also show that the solution
$(Y,Z,K)$ to (\ref{eq1.1}) may be approximated by the penalization
method if $p\in(1,2]$ and if $p=1$ and $f$ is independent of $z$.
More precisely, if $p\in(1,2]$ then
\begin{equation}
\label{eq1.3} \|Y^n-Y\|_{{\cal S}^p}\to0,\quad \|Z^n-Z\|_{{\cal
H}^p}\to 0,\quad\|K^n-K\|_{{\cal S}^p}\to0,
\end{equation}
where $(Y^n,Z^n)$ is a solution of the BSDE
\begin{equation}
\label{eq1.2}
Y^n_t=\xi+\int_t^Tf(s,Y^n_s,Z^n_s)\,ds-\int_t^TZ^n_s\,dW_s
+K^n_T-K^n_t,\quad t\in[0,T]
\end{equation}
with
\[
K^n_t=n\int_0^t(Y^n_s-L_s)^-\,ds,\quad t\in[0,T].
\]
This  generalizes  and at the same time strengthens corresponding
result proved in \cite{lmx} in case $p=2$. In case $p=1$ we show
that (\ref{eq1.3}) holds in the spaces ${\cal S}^{\beta}$, ${\cal
H}^{\beta}$ with $\beta\in(0,1)$.

The paper is organized as follows. Section 2 contains basic
notation and  definitions. A useful a priori estimate for stopped
solutions of RBSDEs is also given. In Section 3 we prove main
estimates in  case $p\in(1,2]$. In Section 4 we apply the above
mentioned estimates  to prove convergence of penalization scheme
in case $p\in(1,2]$. Section 5 is devoted to the case where $p=1$.
For generator $f$ not depending on $z$ we give some a priori
estimates similar to those proved in case $p>1$ and we show
convergence of the penalization scheme. In the general case,
following \cite[Section 6]{bdhps} we prove existence and
uniqueness of solutions of (\ref{eq1.1}) under some additional
assumption on $f$. In this case the solution is a limit of
solutions of appropriately chosen RBSDEs  with generators not
depending on $z$.

\nsubsection{Notation and preliminary estimates}

Let $(\Omega,{\cal F},P)$ be a complete probability space.
$\LL^p$, $p>0$, is the space of random variables $X$ such that
$\|X\|_p=E(|X|^p)^{1\wedge1/p}<+\infty$. $X^*_t=\sup_{s\le
t}|X_s|$, $t\in[0,T]$. ${\cal S}^p$ denotes the set of adapted and
continuous processes $X$ such that $\|X\|_{{\cal
S}^p}=\|X^*_T\|_p<+\infty$. Let $W$ be a standard $d$-dimensional
Wiener process on $(\Omega, {\cal F}, P)$  and let $(\FF_t)$ be
the standard augmentation of the natural filtration generated by
$W$. ${\cal H}^p$ denotes the set of progressively measurable
$d$-dimensional processes $X$ such that $\|X\|_{{\cal H}^p}
=\|(\int_0^T|X_s|^2\,ds)^{1/2}\|_p<+\infty$. It is well known that
${\cal S}^p$  and ${\cal H}^p$ are Banach spaces for $p\geq1$. If
$p<1$ then $\LL^p$, ${\cal S}^p$ and ${\cal H}^p$ are complete
metric spaces with metrics defined by $\|\cdot\|_p$,
$\|\cdot\|_{{\cal S}^p}$ and $\|\cdot\|_{{\cal H}^p}$\,,
respectively.

We will assume that we are given an ${\cal F}_T$ measurable random
variable $\xi$, a generator
$f:[0,T]\times\Omega\times\R\times\R^d\to\R$ measurable with
respect to $Prog\otimes {\cal B}(\R)\otimes{\cal B}(\R^d)$, where
$Prog$ denotes the  $\sigma$-field of progressive subsets of
$[0,T]\times\Omega$ and a barrier $L$, which is an $({\cal F}_t)$
adapted  continuous process. We will always assume that $\xi\geq
L_T$. We will need the following assumptions on $f$.
\begin{enumerate}
\item[(H1)]There is $\lambda\ge0$ such that
$|f(t,y,z)-f(t,y,z')|\le\lambda|z-z'|$ for $t\in[0,T]$, $y\in\R$,
$z,z'\in\Rd$,
\item[(H2)]There is $\mu\in\R$ such that
$(y-y')(f(t,y,z)-f(t,y',z))\leq\mu(y-y')^2$ for $t\in[0,T]$,
$y,y'\in\R$, $z\in\Rd$.
\end{enumerate}
In (H1), (H2) and  in the sequel we understand that the
inequalities hold true  $P$-a.s..


\begin{proposition} \label{prop2.1}
Assume that $f$ satisfies (H1), (H2) and let $(Y,Z,K)$ be a
solution of (\ref{eq1.1}). Then for every $p>0$ there exists $C>0$
depending only on  $p$ and $\mu,\lambda, T$ such that for every
stopping time $\tau$ such that $\tau\le T$,
\begin{equation}
\label{eq2.01} E\big((\int_0^\tau|Z_s|^2ds)^{p/2}+K_\tau^p\big)
\le C E\Big((Y^*_\tau)^p+(L^{+,*}_\tau)^p
+(\int_0^\tau|f(s,L^{+,*}_s,0)|\,ds)^p\Big).
\end{equation}
\end{proposition}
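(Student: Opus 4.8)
The plan is to apply Itô's formula to $Y_t^2$ on $[0,\tau]$, which produces the identity
\[
\int_0^\tau|Z_s|^2\,ds=Y_\tau^2-Y_0^2+2\int_0^\tau Y_sf(s,Y_s,Z_s)\,ds-2\int_0^\tau Y_sZ_s\,dW_s+2\int_0^\tau Y_s\,dK_s,
\]
from which I would read off a bound for $(\int_0^\tau|Z_s|^2\,ds)^{p/2}$, while the bound for $K_\tau^p$ will be extracted from the equation rewritten as $K_\tau=Y_0-Y_\tau-\int_0^\tau f(s,Y_s,Z_s)\,ds+\int_0^\tau Z_s\,dW_s$. The reflection term is handled by minimality: since $\int_0^\tau(Y_s-L_s)\,dK_s=0$ we have $\int_0^\tau Y_s\,dK_s=\int_0^\tau L_s\,dK_s$, and because $dK$ charges only $\{Y_s=L_s\}$, on its support $|L_s|=|Y_s|\le Y^*_\tau$, so $|\int_0^\tau Y_s\,dK_s|\le Y^*_\tau K_\tau$. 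Note that $L$ is merely continuous and adapted, hence not a semimartingale, so Itô cannot be applied to $Y-L$; the running supremum $L^{+,*}$, however, is continuous, nondecreasing and of finite variation, which is exactly what makes the next step legitimate.

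For the generator term I would use the reference value $L^{+,*}_s$, writing $f(s,Y_s,Z_s)=D_s+[f(s,L^{+,*}_s,Z_s)-f(s,L^{+,*}_s,0)]+f(s,L^{+,*}_s,0)$ with $D_s:=f(s,Y_s,Z_s)-f(s,L^{+,*}_s,Z_s)$. The second bracket is $\le\lambda|Z_s|$ by (H1) and the last term integrates to the quantity appearing in (\ref{eq2.01}); in the weighted integral $\int_0^\tau Y_sD_s\,ds$ I would split $Y_s=(Y_s-L^{+,*}_s)+L^{+,*}_s$, control $(Y_s-L^{+,*}_s)D_s\le\mu(Y_s-L^{+,*}_s)^2\le2|\mu|((Y^*_\tau)^2+(L^{+,*}_\tau)^2)$ by (H2), and absorb the cross term $\lambda|Y_s||Z_s|$ into $\tfrac12|Z_s|^2$ by Young's inequality. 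The one genuinely awkward piece is $\int_0^\tau L^{+,*}_sD_s\,ds$, which monotonicity controls on only one side; here the key trick is integration by parts against the finite-variation process $L^{+,*}$, using $dY_s=-f\,ds+Z_s\,dW_s-dK_s$, to rewrite $\int_0^\tau L^{+,*}_sf(s,Y_s,Z_s)\,ds$ as boundary terms $L^{+,*}_\tau Y_\tau$, a term $\int_0^\tau Y_s\,dL^{+,*}_s\le Y^*_\tau L^{+,*}_\tau$, a martingale $\int_0^\tau L^{+,*}_sZ_s\,dW_s$, and $\int_0^\tau L^{+,*}_s\,dK_s\le L^{+,*}_\tau K_\tau$. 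This is the device that avoids any growth assumption on $f$ in $y$.

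The main obstacle is the control of $K_\tau$, equivalently of $\int_0^\tau f(s,Y_s,Z_s)\,ds$. A pointwise estimate is hopeless: monotonicity bounds $D_s$ from above on $\{Y_s>L^{+,*}_s\}$ and from below on $\{Y_s\le L^{+,*}_s\}$, so neither $\int D_s^+$ nor $\int D_s^-$ is controlled termwise, and indeed for generators of superlinear decay the pointwise bound $\int_0^\tau|f(s,Y_s,0)|\,ds$ can be far larger than the right-hand side. What saves the estimate is the cancellation carried by the equation and the reflection: the signed integral telescopes, and the combination of the two displayed representations together with the reflection identity and the integration-by-parts bound above yields $\int_0^\tau f\,ds$ in terms of $Y^*_\tau$, $L^{+,*}_\tau$, $\int_0^\tau|f(s,L^{+,*}_s,0)|\,ds$, $(\int_0^\tau|Z_s|^2\,ds)^{1/2}$ and the cross terms $Y^*_\tau K_\tau$, $L^{+,*}_\tau K_\tau$. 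These cross terms couple the $Z$- and $K$-estimates, so I would treat the two inequalities as a coupled system and close it by splitting the cross terms with Young's inequality using a small coefficient on $K_\tau$ and on $\int_0^\tau|Z_s|^2\,ds$, absorbing them into the left-hand sides.

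Finally, to reach arbitrary $p>0$ rather than just $p=2$, I would not iterate the $L^2$ computation but raise the resulting pathwise inequalities to the power $p/2$ (respectively $p$), take expectations, and handle the stochastic integrals by the Burkholder–Davis–Gundy inequality: for instance $E\big|\int_0^\tau Y_sZ_s\,dW_s\big|^{p/2}\le CE\big[(Y^*_\tau)^{p/2}(\int_0^\tau|Z_s|^2\,ds)^{p/4}\big]$, which by Young's inequality is dominated by $\varepsilon E(\int_0^\tau|Z_s|^2\,ds)^{p/2}+C_\varepsilon E(Y^*_\tau)^p$ and absorbed. The same BDG-plus-Young scheme disposes of $\int_0^\tau Z_s\,dW_s$ and $\int_0^\tau L^{+,*}_sZ_s\,dW_s$ in the $K$-estimate, and after choosing all the small constants appropriately the coupled inequalities give (\ref{eq2.01}) with a constant $C$ depending only on $p,\mu,\lambda,T$.
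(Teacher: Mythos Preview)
Your plan for the $Z$-estimate is workable, but the treatment of $K_\tau$ has a genuine gap. You correctly observe that $D_s:=f(s,Y_s,Z_s)-f(s,L^{+,*}_s,Z_s)$ is one-sidedly bounded by (H2), with the sign depending on $\sgn(Y_s-L^{+,*}_s)$, so neither $\int_0^\tau D_s\,ds$ nor its negative is controlled. Bounding $K_\tau=Y_0-Y_\tau-\int_0^\tau f\,ds+\int_0^\tau Z\,dW$ from above requires an upper bound on $-\int_0^\tau f\,ds$, hence a lower bound on $\int_0^\tau D_s\,ds$, and none of your devices supplies one: the It\^o formula for $Y^2$ controls the weighted integral $\int Y_sf\,ds$, your integration by parts against $L^{+,*}$ controls $\int L^{+,*}_sf\,ds$, but neither gives the unweighted $\int f\,ds$. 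The identity you call ``telescoping'' is just the equation itself, which expresses the sum $\int f\,ds+K_\tau$ in terms of $Y^*$ and $\int Z\,dW$ but cannot separate the two summands. Thus your ``coupled system'' has only one genuine inequality (the $Z$-bound, with its cross term in $K_\tau$); there is no valid second inequality to close against it.

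The paper resolves this by a localization you do not use. Since $dK$ is supported on $\{Y_s=L_s\}\subset\{Y_s\le L^{+,*}_s\}$, one writes $K_\tau=\int_0^\tau{\bf 1}_{\{Y_s\le L^{+,*}_s\}}\,dK_s$ and substitutes $dK_s=-dY_s-f(s,Y_s,Z_s)\,ds+Z_s\,dW_s$. On the set $\{Y_s\le L^{+,*}_s\}$ monotonicity now points the right way: with $\mu=0$ one has $-f(s,Y_s,0)\le -f(s,L^{+,*}_s,0)\le|f(s,L^{+,*}_s,0)|$, so the drift contribution is controlled outright. The remaining piece $-\int_0^\tau{\bf 1}_{\{Y_s\le L^{+,*}_s\}}\,dY_s$ is handled by the It\^o--Tanaka formula for $(Y-L^{+,*})^-$, whose local-time term has the favourable sign; this bounds it by $Y^*_\tau+L^{+,*}_\tau$. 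One obtains $K_\tau$ dominated by $Y^*_\tau$, $L^{+,*}_\tau$, $\int|f(s,L^{+,*}_s,0)|\,ds$, $(\int|Z|^2)^{1/2}$ and a stochastic integral, with no $K_\tau$ on the right. Incidentally, the paper also applies It\^o to $(Y-L^{+,*})^2$ rather than $Y^2$ for the $Z$-estimate, which makes $\int(Y_s-L^{+,*}_s)\,dK_s\le 0$ outright; the $Z$-bound then decouples from $K$ entirely and your integration-by-parts detour becomes unnecessary.
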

\begin{proof}
Let $a\in\R$ and let $\tilde Y_t=e^{at}Y_t$, $\tilde
Z_t=e^{at}Z_t$, $\tilde K_t=\int_0^te^{as}dK_s$ and $\tilde
\xi=e^{aT}\xi$, $\tilde f(t,y,z)=e^{at}f(t,e^{-at}y,e^{-at}z)-ay$.
Observe that $(\tilde Y,\tilde Z, \tilde K)$ solves the RBSDE
\[
\tilde Y_t=\tilde \xi+\int_t^T\tilde f(s,\tilde Y_s,\tilde
Z_s)\,ds-\int_t^T\tilde Z_s\,dW_s +\tilde K_T-\tilde K_t,\qquad
t\in[0,T]
\]
with the reflecting barrier $\tilde L_t=e^{at}L_t$, and that there
exist constants $C_1,C_2>0$ depending only on $p,a,T$ such that
\begin{align*}
(\int_0^\tau|f(s,L^{+,*}_s,0)|ds)^p+(L^{+,*}_\tau)^p &\le
C_1\big((\int_0^\tau|\tilde f(s,\tilde L^{+,*}_s,0)|ds)^p
+(\tilde L^{+,*}_\tau)^p\big)\\
&\le
C_2\big((\int_0^\tau|f(s,L^{+,*}_s,0)|ds)^p+(L^{+,*}_\tau)^p\big).
\end{align*}
It follows that (\ref{eq2.01}) is satisfied if and only if it is
satisfied for the solution $(\tilde Y,\tilde Z,\tilde K)$ and the
data $\tilde\xi,\tilde f,\tilde L$ (with some constant $C$
depending also on $a$). Therefore choosing $a$ appropriately we
may assume that (H2) is satisfied with arbitrary but fixed
$\mu\in\R$. In the rest of the proof we will assume that $\mu=0$.
Moreover, without loss of generality we may and will assume that
$Y^*_\tau$, $L^{+,*}_\tau$, $\int_0^\tau|f(s,L^{+,*}_s,0)|ds$
$\in\LL^p$. Set $\tau_n=\inf\{t;\int_0^t|Z_s|^2ds\geq
n\}\wedge\tau$, $n\in\N$. Obviously $P(\tau_n=\tau)\nearrow1$. By
It\^o's formula applied to the continuous semimartingale
$Y-L^{+,*}$, for $n\in\N$ we have
\begin{align*}
&(Y_0-L^{+,*}_0)^2+\int_0^{\tau_n}|Z_s|^2\,ds
=(Y_{\tau_n}-L^{+,*}_{\tau_n})^2
+2\int_0^{\tau_n}(Y_s-L^{+,*}_s)f(s,Y_s,Z_s)\,ds\\
&\qquad-2\int_0^{\tau_n}(Y_s-L^{+,*}_s)Z_s\,dW_s
+2\int_0^{\tau_n}(Y_s-L^{+,*}_s)(dK_s+dL^{+,*}_s).
\end{align*}
Since $K$ is increasing only on the set $\{s:Y_s=L_s\}$,
\begin{align}
\label{eq2.1} \int_0^{\tau_n}(Y_s-L^{+,*}_s)\,dK_s
&\le \int_0^{\tau_n}(Y_s-L^{+,*}_s)
{\bf 1}_{\{Y_s>L^{+,*}_s\}} \,dK_s\nonumber\\
&=\int_0^{\tau_n}(Y_s-L^{+,*}_s){\bf 1}_{\{L_s>L^{+,*}_s\}}\,
dK_s=0.
\end{align}
By the above and (H1), (H2),
\begin{align*} \int_0^{\tau_n}|Z_s|^2\,ds
&\le(Y_{\tau_n}-L^{+,*}_{\tau_n})^2
+2\int_0^{\tau_n}|Y_s-L^{+,*}_s||f(s,L^{+,*}_s,0)|\,ds\\
&\quad+2\lambda\int_0^{\tau_n}|Y_s-L^{+,*}_s||Z_s|\,ds+
2|\int_0^{\tau_n}(Y_s-L^{+,*}_s)Z_s\,dW_s|\\
&\quad+2\sup_{t\leq \tau_n}|Y_t-L^{+,*}_t|L^{+,*}_{\tau_n}\\
&\le\sup_{t\leq \tau_n}(Y_t-L^{+,*}_t)^2+2\sup_{t\leq
\tau_n}|Y_t-L^{+,*}_t|\int_0^{\tau_n}|f(s,L^{+,*}_s,0)|\,ds\\
&\quad+\int_0^{\tau_n}\big(\frac{(2\lambda|Y_s-L^{+,*}_s|)^2}2
+\frac{|Z_s|^2}2\big)\,ds\\
&\quad+2|\int_0^{\tau_n}(Y_s-L^{+,*}_s)Z_s\,dW_s|
+\sup_{t\leq \tau_n}(Y_t-L^{+,*}_t)^2+(L^{+,*}_{\tau_n})^2\\
&\le(3+2\lambda^2T)\sup_{t\leq\tau_n}(Y_t-L^{+,*}_t)^2
+(\int_0^{\tau_n}|f(s,L^{+,*}_s,0)|\,ds)^2\\
&\quad+\frac12\int_0^{\tau_n}|Z_s|^2\,ds
+(L^{+,*}_{\tau_n})^2+2|\int_0^{\tau_n}(Y_s-L^{+,*}_s)Z_s\,dW_s|.
\end{align*}
Hence there is $C'>0$ such that
\[
\int_0^{\tau_n}|Z_s|^2\,ds\le
C'\big((Y^*_{\tau})^2+(L^{+,*}_{\tau})^2
+(\int_0^{\tau}|f(s,L^{+,*}_s,0)|ds)^2
+|\int_0^{\tau_n}(Y_s-L^{+,*}_s)Z_s\,dW_s|\big),
\]
which implies that for some $C'_p>0$,
\begin{align*}
(\int_0^{\tau_n}|Z_s|^2ds)^{p/2} &\leq
C'_p\Big((Y^*_{\tau})^p+(L^{+,*}_{\tau})^p\\
&\quad+(\int_0^{\tau}|f(s,L^{+,*}_s,0)|\,ds)^p
+|\int_0^{\tau_n}(Y_s-L^{+,*}_s)Z_s\,dW_s|^{p/2}\Big).
\end{align*}
By the Burkholder-Davis -Gundy inequality,
\begin{align*}
E|\int_0^{\tau_n}(Y_s-L^{+,*}_s)Z_s\,dW_s|^{p/2} &\le c_pE
(\int_0^{\tau_n}(Y_s-L^{+,*}_s)^2|Z_s|^2\,ds)^{p/4}\\
&\le c'_pE\big((Y^*_\tau)^p+(L^{+,*}_\tau)^p\big) +\frac12 E
(\int_0^{\tau_n}|Z_s|^2\,ds)^{p/2}.
\end{align*}
Putting together the last two estimates we see that there is $C>0$
such that
\[
E\big((\int_0^{\tau_n}|Z_s|^2\,ds)^{p/2}\big)\le C
E\Big((Y^*_\tau)^p+(L^{+,*}_\tau)^p
+(\int_0^\tau|f(s,L^{+,*}_s,0)|\,ds)^p\Big)
\]
for all $n\in\N$. Letting $n\to\infty$ and using Fatou's lemma we
conclude that
\begin{equation}
\label{eq2.2} E\big((\int_0^\tau|Z_s|^2\,ds)^{p/2}\big)\le C
E\Big((Y^*_\tau)^p+(L^{+,*}_\tau)^p
+(\int_0^\tau|f(s,L^{+,*}_s,0)|\,ds)^p\Big).
\end{equation}
In order to get estimates on $K$ we first observe that by
(\ref{eq1.1}),
\[
K_t= Y_0-Y_t-\int_0^tf(s,Y_s,Z_s)\,ds+\int_0^tZ_s\,dW_s,\quad
t\in[0,T].
\]
Hence $dK_s=-dY_s-f(s,Y_s,Z_s)\,ds +Z_s\,dW_s$. From this, (H1)
and the fact that  $K$  is increasing  only on the set
$\{s:L_s=Y_s\}$ it follows  that
\begin{align}
\label{eq2.3} K_\tau=\int_0^{\tau}{\bf 1}_{\{Y_s\leq
L^{+,*}_s\}}\,dK_s &= -\int_0^{\tau}{\bf 1}_{\{Y_s\leq
L^{+,*}_s\}}\,dY_s
-\int_{0}^{\tau}f(s,Y_s,Z_s){\bf 1}_{\{Y_s\le L^{+,*}_s\}}\,ds\nonumber\\
&\nonumber\quad +\int_{0}^{\tau}Z_s{\bf 1}_{\{Y_s\leq L^{+,*}_s\}}\,dW_s\\
 &\le-\int_0^{\tau}{\bf 1}_{\{Y_s\leq
L^{+,*}_s\}}\,dY_s -\int_{0}^{\tau}f(s,Y_s,0){\bf 1}_{\{Y_s\le
L^{+,*}_s\}}\,ds\nonumber\\
&\quad+\lambda T^{1/2}(\int_{0}^{\tau}|Z_s|^2\,ds)^{1/2}
+\int_{0}^{\tau}Z_s{\bf 1}_{\{Y_s\leq L^{+,*}_s\}}\,dW_s.
\end{align}
By the classical It\^o-Tanaka formula applied to the function
$g(x)=(x)^-=\max(-x,0)$ and the continuous semimartingale
$Y-L^{+,*}$,
\begin{align*}
-\int_0^{\tau}{\bf 1}_{\{Y_s\leq
L^{+,*}_s\}}\, dY_s&=-(Y_0-L^{+,*}_0)^-+(Y_{\tau}-L^{+,*}_{\tau})^-\\
&\quad-\int_0^{\tau}{\bf 1}_{\{Y_s\leq L^{+,*}_s\}}\,dL^{+,*}_s
-\frac12 L^0_{\tau}(Y-L^{+,*})\\
&\le Y^*_{\tau}+L^{+,*}_{\tau},
\end{align*}
where $L^0(Y-L^{+,*})$ denotes the usual local time of $Y-L^{+,*}$
at $0$. On the other hand, by (H2),
\[
-f(s,Y_s,0){\bf 1}_{\{Y_s\leq
L^{+,*}_s\}}\leq-f(s,L^{+,*}_s,0){\bf 1}_{\{Y_s\leq L^{+,*}_s\}}
\leq|f(s,L^{+,*}_s,0)|.
\]
From the above we deduce that there is $C_p>0$ such that
\begin{align*}
E(K_\tau)^p&\le  C_p E\Big((Y^*_{\tau})^p+(L^{+,*}_{\tau})^p
+(\int_0^{\tau}|f(s,L^{+,*}_s,0)|\,ds)^p\\
&\quad+(\int_{0}^{\tau}|Z_s|^2\,ds)^{p/2}+|\int_{0}^{\tau}Z_s{\bf
1}_{\{Y_s\leq L^{+,*}_s\}}\,dW_s|^p\Big).
\end{align*}
Combining this with (\ref{eq2.2}) and using the
Burkholder-Davis-Gundy we get (\ref{eq2.01}).
\end{proof}

\nsubsection{Main estimates in the case $p>1$}
\label{sec3}

Let $g:\R\to\R$ be a difference of two convex functions and let
$X$ be a continuous semimartingale. We will use the following form
of the It\^o-Tanaka formula
\begin{equation}
\label{eq3.1}
g(X_t)=g(X_0)+\int_0^t\frac12(g'_-+g'_+)(X_s)dX_s+\frac12\int_\R
\tilde L^a_t(X)g''(da)
\end{equation}
(see \cite[Exercise VI.1.25]{RY}). Here  $\tilde L^a(X)$ denotes
the symmetric local time of $X$ at $a\in\R$ and $g''(da)$ is a
measure determined by the second derivative of $g$ in the sense of
distributions. Note that $\tilde L^a(X)$ is a unique increasing
process such that
\begin{equation}
\label{eq3.2}
|X_t-a|=|X_0-a|+\int_0^t\sgn(X_s-a)dX_s+\tilde L^a_t(X),
\end{equation}
where $\sgn(x)$ is equal $1$ if $x>0$, $-1$ if $x<0$  and $0$ if
$x=0$ (see \cite[Exercise VI.1.25]{RY}). One can observe that
$\tilde L^a(X)=(L^a(X)+L^{a-}(X))/2$, where $L^a(X)$ denotes the
usual local time of  $X$ at $a$. If $g''$ is absolutely
continuous, i.e. if $g''(da)=g''(a)da$, then by the occupation
times formula, $\int_\R \tilde
L^a_t(X)g''(da)=\int_0^tg''(X_s)d[X]_s$. In this section we will
apply (\ref{eq3.1}) to functions of the form $g(x)=|x|^p$ or
$g(x)=((x)^+)^p$. If  $p>1$ then in both cases the second
derivative of $g$ is absolutely continuous. Therefore if $p>1$
then the backward It\^o-Tanaka formula has the form
\begin{equation}
\label{eq3.3} g(X_t)+\frac12\int_t^Tg''(X_s)\,d[X]_s
=g(X_T)-\int_t^T\frac12(g'_-+g'_+)(X_s)\,dX_s.
\end{equation}

We can now prove basic a priori estimate and comparison result for
$\LL^p$ solutions of (\ref{eq1.1}).

\begin{proposition}\label{prop2}
Assume that $f$ satisfies (H1), (H2) and let $(Y,Z,K)$ be a
solution of (\ref{eq1.1}) such that $Y\in{\cal S}^p$ for some
$p>1$. There exists $C>0$ depending only on $p$ and $\mu,\lambda,
T$ such that
\[
E\big(Y^*_{T}\big)^p\leq C
E\Big(|\xi|^p+(L^{+,*}_T)^p
+(\int_0^T|f(s,L^{+,*}_s,0)|\,ds)^p\Big).
\]
\end{proposition}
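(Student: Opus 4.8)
The plan is to apply the backward It\^o--Tanaka formula \eqref{eq3.3} with $g(x)=|x|^p$ to the semimartingale $Y-L^{+,*}$, or more cleanly to work with the shifted process so that the obstacle is absorbed into the reference level. As in the proof of Proposition 2.1, I would first perform the exponential change of variables $\tilde Y_t=e^{at}Y_t$ etc.\ in order to reduce (H2) to the case $\mu=0$; this is harmless because the constant $C$ is allowed to depend on $\mu,\lambda,T$, and the obstacle terms on the right-hand side transform up to multiplicative constants. So I will assume $\mu=0$ throughout. Since we are told $Y\in{\cal S}^p$, all the suprema appearing below are $\LL^p$-integrable and the localization by stopping times $\tau_n=\inf\{t:\int_0^t|Z_s|^2\,ds\ge n\}$ can be carried out exactly as before, with Fatou's lemma at the end.

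The main computation is to apply \eqref{eq3.3} to $X=Y-L^{+,*}$. Here $g(x)=|x|^p$ has $g''(x)=p(p-1)|x|^{p-2}$ and $\tfrac12(g'_-+g'_+)(x)=p|x|^{p-2}x=p\,|x|^{p-1}\sgn(x)$, so the formula reads
\[
|Y_t-L^{+,*}_t|^p+\frac{p(p-1)}2\int_t^T|Y_s-L^{+,*}_s|^{p-2}\,d[Y-L^{+,*}]_s
=|\xi-L^{+,*}_T|^p-\int_t^T p|Y_s-L^{+,*}_s|^{p-1}\sgn(Y_s-L^{+,*}_s)\,d(Y_s-L^{+,*}_s).
\]
Now I substitute $dY_s=-f(s,Y_s,Z_s)\,ds+Z_s\,dW_s-dK_s$. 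The key sign cancellations are: first, because $K$ increases only when $Y_s=L_s\le L^{+,*}_s$, the term involving $dK_s$ is nonnegative after accounting for the sign of $\sgn(Y_s-L^{+,*}_s)$, so it can be dropped (as in \eqref{eq2.1}); second, the drift term $p|Y_s-L^{+,*}_s|^{p-1}\sgn(\cdot)f(s,Y_s,Z_s)$ is controlled using (H1) and (H2) with $\mu=0$: write $f(s,Y_s,Z_s)=[f(s,Y_s,Z_s)-f(s,L^{+,*}_s,Z_s)]+[f(s,L^{+,*}_s,Z_s)-f(s,L^{+,*}_s,0)]+f(s,L^{+,*}_s,0)$, so that the monotonicity kills the first bracket in the favorable direction, the Lipschitz bound $\lambda|Z_s|$ handles the second, and $|f(s,L^{+,*}_s,0)|$ remains. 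The term $\int_t^T|Y_s-L^{+,*}_s|^{p-2}\,d[Y]_s=\int_t^T|Y_s-L^{+,*}_s|^{p-2}|Z_s|^2\,ds$ on the left is nonnegative and will be exploited to absorb the Lipschitz contribution via Young's inequality.

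After evaluating at $t=0$ and taking expectations, the stochastic integral $\int_0^{\tau_n}p|Y_s-L^{+,*}_s|^{p-1}\sgn(\cdot)Z_s\,dW_s$ has mean zero on each localized interval, so the heart of the matter is to bound
\[
E\sup_{t\le T}|Y_t-L^{+,*}_t|^p
\]
rather than the value at a single time. For this I would run the argument with $t$ replaced by a generic time, take $\sup_t$ before expectation, and use the Burkholder--Davis--Gundy inequality on the martingale part exactly as in Proposition 2.1: the bracket of $\int p|Y-L^{+,*}|^{p-1}\sgn(\cdot)Z\,dW$ is $\int p^2|Y-L^{+,*}|^{2p-2}|Z|^2\,ds$, and BDG followed by the elementary split $|Y-L^{+,*}|^{2p-2}|Z|^2=(|Y-L^{+,*}|^p)^{(p-1)/p}\cdot(|Y-L^{+,*}|^{p-2}|Z|^2)$ together with Young's inequality lets me absorb a small multiple of $E\sup_t|Y-L^{+,*}|^p$ and of $E\int|Y-L^{+,*}|^{p-2}|Z|^2\,ds$ into the left-hand side. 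The main obstacle, as usual in these $\LL^p$ estimates, is this absorption step: one must choose the Young-inequality weights so that the coefficient in front of $E\sup_t|Y-L^{+,*}|^p$ on the right is strictly less than $1$, which is exactly where the positivity of the $g''$-term $\int|Y-L^{+,*}|^{p-2}|Z|^2\,ds$ and the factor $p>1$ are essential. Once the absorption succeeds I obtain
\[
E\sup_{t\le T}|Y_t-L^{+,*}_t|^p\le C\,E\Big(|\xi-L^{+,*}_T|^p+(\textstyle\int_0^T|f(s,L^{+,*}_s,0)|\,ds)^p\Big),
\]
and since $Y^*_T\le \sup_t|Y-L^{+,*}_t|+L^{+,*}_T$ and $|\xi-L^{+,*}_T|\le|\xi|+L^{+,*}_T$, the triangle inequality for the $\LL^p$ (quasi-)norm gives the stated estimate with a new constant $C$.
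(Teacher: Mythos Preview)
Your outline follows the same route as the paper (It\^o--Tanaka for $|Y-L^{+,*}|^p$, sign control on $dK$, BDG and absorption), but there is one genuine gap in the absorption step. With your reduction to $\mu=0$, Young's inequality on the Lipschitz cross term
\[
p\lambda|Y_s-L^{+,*}_s|^{p-1}|Z_s|\le \frac{p(p-1)}{4}|Y_s-L^{+,*}_s|^{p-2}{\bf 1}_{\{Y_s\ne L^{+,*}_s\}}|Z_s|^2+\frac{p\lambda^2}{p-1}|Y_s-L^{+,*}_s|^{p}
\]
produces an extra term $\frac{p\lambda^2}{p-1}\int_t^T|Y_s-L^{+,*}_s|^p\,ds$ on the right that you never mention. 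You cannot simply absorb it into $E\sup_t|Y_t-L^{+,*}_t|^p$ unless $\lambda^2 T$ is small. The paper avoids this by choosing the free constant in the exponential change so that $\mu=-\lambda^2/(p-1)$ rather than $\mu=0$; then the monotonicity bound contributes $+\mu|Y_s-L^{+,*}_s|$ to $\sgn(Y_s-L^{+,*}_s)f(s,Y_s,Z_s)$, and the resulting $p\mu\int|Y-L^{+,*}|^p\,ds$ exactly cancels the Young remainder above, leaving the clean inequality $|Y_t-L^{+,*}_t|^p+\frac{p(p-1)}{4}\int_t^T\ldots\le X-M_t$ with no $|Y-L^{+,*}|^p$ integral. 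This choice of $\mu$ is the missing idea.

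Two smaller points. First, your factorisation for the BDG step is off: the correct split is $|Y-L^{+,*}|^{2p-2}|Z|^2=|Y-L^{+,*}|^{p}\cdot|Y-L^{+,*}|^{p-2}|Z|^2$, and after taking the square root and pulling out $\sup|Y-L^{+,*}|^{p/2}$ one applies Young with exponents $2,2$, exactly as in the paper. Second, you omit the $-dL^{+,*}_s$ part of $d(Y_s-L^{+,*}_s)$; it enters the right-hand side as $p\int_0^T|Y_s-L^{+,*}_s|^{p-1}\,dL^{+,*}_s$ and, after the final Young inequality, contributes the $(L^{+,*}_T)^p$ term, not only through $|\xi-L^{+,*}_T|$. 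Finally, the localisation by $\tau_n$ is unnecessary here: since $Y\in{\cal S}^p$, Proposition~\ref{prop2.1} already gives $Z\in{\cal H}^p$, so $M$ is a uniformly integrable martingale and one can take expectations directly.
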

\begin{proof}
We follow the proof of \cite[Proposition 3.2]{bdhps}. The
reasoning used at the beginning of the proof of Proposition
\ref{prop2.1} shows that we may assume that $\mu=-\lambda^2/(p-1)$
and $\xi$, $L^{+,*}_T,\int_0^T|f(s,L^{+,*}_s,0)|\,ds\in\LL^p$. By
(\ref{eq3.3}),
\begin{align*}
&|Y_t-L^{+,*}_t|^p +\frac{p(p-1)}2\int_t^T|Y_s-L^{+,*}_s|^{p-2}
{\bf 1}_{\{Y_s\ne L^{+,*}_s\}}|Z_s|^2\,ds\\
&\qquad=|\xi-L^{+,*}_T|^p
+p\int_t^T|Y_s-L^{+,*}_s|^{p-1}\,\sgn(Y_s-L^{+,*}_s)f(s,Y_s,Z_s)\,ds\\
&\qquad\quad+p\int_t^T|Y_s-L^{+,*}_s|^{p-1}\,\sgn(Y_s-L^{+,*}_s)(dK_s+dL^{+,*}_s)\\
&\qquad\quad-p\int_t^T|Y_s-L^{+,*}_s|^{p-1}\,\sgn(Y_s-L^{+,*}_s)
Z_s\,dW_s.
\end{align*}
By (H2) and the fact that $K$ is increasing only on the set
$\{s:Y_s=L_s\}$,
\[
\sgn(Y_s-L^{+,*}_s)f(s,Y_s,Z_s)\le
|f(s,L^{+,*}_s,0)|+\mu|Y_s-L^{+,*}_s|+\lambda |Z_s|
\]
and
\[
\sgn(Y_s-L^{+,*}_s)\,dK_s\le 0.
\]
Hence
\begin{align*}
&|Y_t-L^{+,*}_t|^p +\frac{p(p-1)}2\int_t^T|Y_s-L^{+,*}_s|^{p-2}
{\bf 1}_{\{Y_s\ne L^{+,*}_s\}}|Z_s|^2\,ds\\
&\qquad\le|\xi-L^{+,*}_T|^p
+p\int_t^T|Y_s-L^{+,*}_s|^{p-1}(|f(s,L^{+,*}_s,0)|\,ds+dL^{+,*}_s)\\
&\qquad\quad+p\mu\int_t^T|Y_s-L^{+,*}_s|^p\,ds
+p\lambda\int_t^T|Y_s-L^{+,*}_s|^{p-1}|Z_s|\,ds\\
&\qquad\quad-p\int_t^T|Y_s-L^{+,*}_s|^{p-1}\,\sgn(Y_s-L^{+,*}_s)
Z_s\,dW_s.
\end{align*}
Since
\begin{align*}
p\lambda|Y_s-L^{+,*}_s|^{p-1}|Z_s|
\le\frac{p\lambda^2}{p-1}|Y_s-L^{+,*}_s|^{p}
+\frac{p(p-1)}4|Y_s-L^{+,*}_s|^{p-2} {\bf 1}_{\{Y_s\ne
L^{+,*}_s\}}|Z_s|^2
\end{align*}
for $s\in[0,T]$, we have
\begin{equation}
\label{eq3.4}
|Y_t-L^{+,*}_t|^p +\frac{p(p-1)}2\int_t^T|Y_s-L^{+,*}_s|^{p-2}
{\bf 1}_{\{Y_s\ne L^{+,*}_s\}}|Z_s|^2\,ds\le X-M_t,
\end{equation}
where
\[
X=|\xi-L^{+,*}_T|^p
+p\int_0^T|Y_s-L^{+,*}_s|^{p-1}(|f(s,L^{+,*}_s,0)|\,ds+dL^{+,*}_s)\]
and
\[
M_t= \int_0^t|Y_s-L^{+,*}_s|^{p-1}\sgn(Y_s-L^{+,*}_s)
Z_s\,dW_s,\quad t\in[0,T].
\]
Since $Y\in{\cal S}^p$ and, by Proposition \ref{eq2.1}, $Z\in{\cal
H}^p$, applying Young's inequality we obtain
\begin{align*}
EX&\le E|\xi-L^{+,*}_T|^p+E(\sup_{t\le T}
|Y_t-L^{+,*}_t|^{p-1}(\int_0^T|f(s,L^{+,*}_s,0)|\,ds+L^{+,*}_T)\\
&\le E|\xi-L^{+,*}_T|^p+\frac{p-1}{p}E\sup_{t\le T}|Y_t-L^{+,*}_t|^{p}\\
&\quad+\frac{2^{p-1}}{p}
E(\int_0^T|f(s,L^{+,*}_s,0)|\,ds)^p+(L^{+,*}_T)^p)<+\infty
\end{align*}
and
\begin{align*}
E([M]_T^{1/2})&\le E(\sup_{t\leq T}
|Y_t-L^{+,*}_t|^{p-1}(\int_0^T|Z_s|^2\,ds)^{1/2})\\
&\le \frac{(p-1)2^{p-1}}{p}E((Y^*_T)^p
+(L^{+,*}_T)^p)+\frac1{p}E(\int_0^T |Z_s|^2\,ds)^{p/2}<+\infty.
\end{align*}
In particular, $M$ is a uniformly integrable martingale and hence,
by (\ref{eq3.4}),
\begin{equation}
\label{eq3.05} \frac{p(p-1)}{2}E\int_0^T|Y_s-L^{+,*}_s|^{p-2} {\bf
1}_{\{Y_s\ne L^{+,*}_s\}}|Z_s|^2\,ds\leq EX.
\end{equation}
From (\ref{eq3.4}), (\ref{eq3.05}), the Burholder-Davis-Gundy
inequality and the definition of $M$ it follows that there is
$c_p$ such that
\begin{align*}
&E\sup_{t\leq T}|Y_t-L^{+,*}_t|^p\le EX+c_pE[M]^{1/2}_T\\
&\qquad\le EX+c_pE(\sup_{t\leq T}
|Y_t-L^{+,*}_t|^{p}\int_0^T|Y_s-L^{+,*}_s|^{p-2}({\bf
1}_{\{Y_s\ne L^{+,*}_s\}}|Z_s|^2\,ds)^{1/2}\\
&\qquad\leq EX+\frac12E\sup_{t\le T}
|Y_t-L^{+,*}_t|^{p}+\frac{c^2_p}2E\int_0^T|Y_s-L^{+,*}_s|^{p-2}
{\bf 1}_{\{Y_s\ne L^{+,*}_s\}}|Z_s|^2\,ds\\
&\qquad\le(1+\frac{c^2_p}{p(p-1)})EX+ \frac12E\sup_{t\leq T}
|Y_t-L^{+,*}_t|^{p}.
\end{align*}
By the above, the definition of $X$ and Young's inequality,
\begin{align*}
&E\sup_{t\le T}|Y_t-L^{+,*}_t|^p\le c_p'EX\\
&\qquad \le c_p'(E|\xi-L^{+,*}_T|^p
+pE\int_0^T|Y_s-L^{+,*}_s|^{p-1}(|f(s,L^{+,*}_s,0)|\,ds+dL^{+,*}_s)\\
&\qquad \le c_p'E|\xi-L^{+,*}_T|^p+ pc_p'E\sup_{t\le T}
|Y_t-L^{+,*}_t|^{p-1}(\int_0^T|f(s,L^{+,*}_s,0)|\,ds+L^{+,*}_T)\\
&\qquad\le \frac12E\sup_{t\leq T}|Y_t-L^{+,*}_t|^{p}+c_p''(
E|\xi-L^{+,*}_T|^p+E(\int_0^T|f(s,L^{+,*}_s,0)|\,ds+L^{+,*}_T)^p).
\end{align*}
Hence
\[
E\sup_{t\leq T}|Y_t-L^{+,*}_t|^p\le 2c_p''(E|\xi-L^{+,*}_T|^p
+(\int_0^T|f(s,L^{+,*}_s,0)|\,ds+L^{+,*}_T)^p),
\]
from which the required estimate for $Y^*_T$ follows.
\end{proof}

\begin{proposition}
\label{prop3.2} Let $(Y,Z,K)$ be a solution of (\ref{eq1.1}) with
$f$ satisfying (H1), (H2) and let $(Y',Z',K')$ be a solution of
(\ref{eq1.1}) with data $\xi'$, $f'$, $L'$ such that
$\xi\leq\xi'$, $f(t,Y'_t,Z'_t)\leq f'(t,Y'_t,Z'_t)$ and $L_t\leq
L'_t$, $t\in[0,T]$. If $Y,Y'\in{\cal S}^p$ for some $p>1$ then
$Y_t\leq Y'_t$, $t\in[0,T]$.
\end{proposition}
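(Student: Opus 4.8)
The plan is to work with the difference of the two solutions and to show that its positive part vanishes. Writing $\hat Y=Y-Y'$, $\hat Z=Z-Z'$ and $\hat K=K-K'$, subtracting the two equations of the form (\ref{eq1.1}) gives
\[
\hat Y_t=(\xi-\xi')+\int_t^T\big(f(s,Y_s,Z_s)-f'(s,Y'_s,Z'_s)\big)\,ds
-\int_t^T\hat Z_s\,dW_s+\hat K_T-\hat K_t.
\]
As at the beginning of the proof of Proposition \ref{prop2.1}, the exponential change of variables $\tilde Y_t=e^{at}Y_t$, $\tilde Y'_t=e^{at}Y'_t$, etc.\ preserves the ordering of all the data (since $e^{at}>0$) and of $Y,Y'$, while turning $\mu$ into $\mu-a$ and leaving $\lambda$ unchanged; choosing $a=\mu+\lambda^2/(p-1)$ I may therefore assume $\mu=-\lambda^2/(p-1)$. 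I would then apply the backward It\^o--Tanaka formula (\ref{eq3.3}) to the function $g(x)=((x)^+)^p$, whose second derivative is absolutely continuous for $p>1$, and to the continuous semimartingale $\hat Y$, noting $d[\hat Y]_s=|\hat Z_s|^2\,ds$ and $g'(x)=p((x)^+)^{p-1}$.

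Since $\xi\le\xi'$ the terminal term $g(\hat Y_T)=((\xi-\xi')^+)^p$ vanishes. On the set $\{\hat Y_s>0\}$ I would decompose
\[
f(s,Y_s,Z_s)-f'(s,Y'_s,Z'_s)=\big(f(s,Y_s,Z_s)-f(s,Y'_s,Z_s)\big)
+\big(f(s,Y'_s,Z_s)-f(s,Y'_s,Z'_s)\big)+\big(f(s,Y'_s,Z'_s)-f'(s,Y'_s,Z'_s)\big),
\]
bounding the three terms by $\mu\hat Y_s$ (by (H2), using $\hat Y_s>0$), by $\lambda|\hat Z_s|$ (by (H1)), and by $0$ (by the hypothesis $f(s,Y'_s,Z'_s)\le f'(s,Y'_s,Z'_s)$). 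The delicate point is the reflection term $g'(\hat Y_s)\,d\hat K_s=p((\hat Y_s)^+)^{p-1}(dK_s-dK'_s)$: on $\{\hat Y_s>0\}$ one has $Y_s>Y'_s\ge L'_s\ge L_s$, hence $Y_s>L_s$, so by the minimality condition (\ref{eq1.1})${}_3$ for $(Y,Z,K)$ we get $dK_s=0$ there, while $dK'_s\ge0$ because $K'$ is nondecreasing; thus this term is nonpositive.

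Combining these bounds with the Young inequality used in Proposition \ref{prop2}, namely $p\lambda((\hat Y_s)^+)^{p-1}|\hat Z_s|\le\frac{p\lambda^2}{p-1}((\hat Y_s)^+)^p+\frac14 g''(\hat Y_s)|\hat Z_s|^2$, the choice $\mu=-\lambda^2/(p-1)$ cancels the $((\hat Y_s)^+)^p$ drift, and half of the $g''$-term is absorbed into the left-hand side, leaving
\[
g(\hat Y_t)+\tfrac14\int_t^T g''(\hat Y_s)|\hat Z_s|^2\,ds\le
-\int_t^T p((\hat Y_s)^+)^{p-1}\hat Z_s\,dW_s.
\]
The stochastic integral on the right is a genuine uniformly integrable martingale: since $Y,Y'\in{\cal S}^p$ and, by Proposition \ref{prop2.1}, $Z,Z'\in{\cal H}^p$, a Burkholder--Davis--Gundy plus Young estimate on $E([M]_T^{1/2})$ (exactly as in Proposition \ref{prop2}) shows its quadratic variation has integrable square root. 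Taking expectations then gives $E\,g(\hat Y_t)\le0$, i.e.\ $E((\hat Y_t)^+)^p=0$ for every $t$, so $Y_t\le Y'_t$ a.s.\ for each $t$ and, by continuity of $Y,Y'$, for all $t$ simultaneously.

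I expect the main obstacle to be the sign analysis of the reflection term, since this is the only place where the barrier ordering $L\le L'$ and the minimality of $K$ genuinely enter; the remaining steps are a routine adaptation of the a priori estimate of Proposition \ref{prop2}.
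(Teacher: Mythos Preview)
Your argument is correct and matches the paper's proof essentially step for step: the exponential change to set $\mu=-\lambda^2/(p-1)$, the It\^o--Tanaka formula for $g(x)=((x)^+)^p$, the three-term splitting of the generator difference bounded via (H2), (H1) and the hypothesis $f\le f'$, the sign analysis of the reflection term (which the paper leaves implicit but you spell out correctly), the Young inequality absorbing half the second-order term, and the conclusion $E((\hat Y_t)^+)^p=0$ after checking the stochastic integral is a true martingale.

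One small caveat: you invoke Proposition~\ref{prop2.1} to get $Z'\in\mathcal H^p$, but that proposition needs (H1)--(H2) on the generator of the equation in question, and nothing of the sort is assumed for $f'$. This is harmless, however, because the martingale property of $M_t=\int_0^t((\hat Y_s)^+)^{p-1}\hat Z_s\,dW_s$ does not actually require $Z,Z'\in\mathcal H^p$ separately: exactly as in the proof of Proposition~\ref{prop2}, one first localizes to bound $E\int_0^T((\hat Y_s)^+)^{p-2}\mathbf 1_{\{\hat Y_s>0\}}|\hat Z_s|^2\,ds$ by a constant times $E\sup_t((\hat Y_t)^+)^p<\infty$ (finite since $Y,Y'\in\mathcal S^p$), and then $E[M]_T^{1/2}$ is controlled by the same two quantities. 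So your reference to ``exactly as in Proposition~\ref{prop2}'' is the right justification; the appeal to $Z'\in\mathcal H^p$ is unnecessary and should be dropped.
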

\begin{proof} Assume that $\mu=-\lambda^2/(p-1)$. Then by (H1),
(H2),
\begin{align*}
&((Y_s-Y'_s)^+)^{p-1}(f(s,Y_s,Z_s)-f(s,Y'_s,Z'_s))\\
&\qquad\le
-\frac{\lambda^2}{p-1}((Y_s-Y'_s)^+)^p+\lambda((Y_s-Y'_s)^+)^{p-1}|Z_s-Z'_s|
\end{align*}
for $s\in[0,T]$. Hence, by (\ref{eq3.3}),
\begin{align*}
&((Y_t-Y'_t)^+)^p+\frac{p(p-1)}2\int_t^T((Y_s-Y'_s)^+)^{p-2}{\bf
1}_{\{Y_s> Y'_s\}}|Z_s-Z'_s|^2\,ds\\
&\quad=((\xi-\xi')^+)^p+\int_t^T((Y_s-Y'_s)^+)^{p-1}\sgn(Y_s-Y'_s)
(f(s,Y_s,Z_s)-f(s,Y'_s,Z'_s))\,ds\\
&\qquad+p\int_t^T((Y_s-Y'_s)^+)^{p-1}\sgn(Y_s-Y'_s)(dK_s-dK'_s)\\
&\qquad-p\int_t^T((Y_s-Y'_s)^+)^{p-1}\sgn(Y_s-Y'_s)(Z_s-Z'_s)\,dW_s\\
&\quad\le((\xi-\xi')^+)^p
-\frac{p\lambda^2}{p-1}\int_t^T((Y_s-Y'_s)^+)^{p}\,ds\\
&\qquad+p\lambda\int_t^T((Y_s-Y'_s)^+)^{p-1}|Z_s-Z'_s|\,ds
-p\int_t^T((Y_s-Y'_s)^+)^{p-1}(Z_s-Z'_s)\,dW_s.
\end{align*}
Since
\begin{align*}
&p\lambda((Y_s-Y'_s)^+)^{p-1}|Z_s-Z'_s|\\
&\quad\le\frac{p\lambda^2}{p-1}((Y_s-Y'_s)^+)^{p}
+\frac{p(p-1)}4((Y_s-Y'_s)^+)^{p-2}{\bf 1}_{\{Y_s>Y'_s\}}
|Z_s-Z'_s|^2,
\end{align*}
it follows that
\begin{align}
\label{eq3.06}
&((Y_t-Y'_t)^+)^p+\frac{p(p-1)}4\int_t^T((Y_s-Y'_s)^+)^{p-2} {\bf
1}_{\{Y_s>Y'_s\}}|Z_s-Z'_s|^2\,ds\nonumber\\
&\qquad\le-p\int_t^T((Y_s-Y'_s)^+)^{p-1}(Z_s-Z'_s)\,dW_s.
\end{align}
Finally, as in the proof of Proposition \ref{prop2} one can check
that $M$ defined by
\[
M_t=\int_0^t ((Y_s-Y'_s)^+)^{p-1}(Z_s-Z'_s)\,dW_s,\quad
t\in[0,T]
\]
is a uniformly integrable martingale. Therefore from
(\ref{eq3.06}) it follows that $E((Y_t-Y'_t)^+)^p=0$, $t\in[0,T]$.
\end{proof}
\medskip

By repeating arguments from the proof of  Proposition
\ref{prop3.2} one can obtain the following version  of the
comparison theorem for  nonreflected BSDEs.
\begin{corollary}
\label{cor3.3} Let $(Y,Z)$ be a solution of nonreflected
(\ref{eq1.1}) (i.e., where $L=-\infty$  and $K=0$) with $f$
satisfying (H1), (H2) and let $(Y',Z')$ be a solution of
nonreflected (\ref{eq1.1}) with data $\xi'$, $f'$,  such that
$\xi\leq\xi'$ and $f(t,Y'_t,Z'_t)\leq f'(t,Y'_t,Z'_t)$,
$t\in[0,T]$. If $Y,Y'\in{\cal S}^p$ for some $p>0$ then $Y_t\le
Y'_t$, $t\in[0,T]$.
\end{corollary}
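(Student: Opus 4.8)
The plan is to run the argument of Proposition~\ref{prop3.2} in the degenerate situation $L=L'\equiv-\infty$ and $K\equiv K'\equiv0$, so that the two reflecting terms disappear and only the generators and the driving stochastic integral remain. Put $\bar Y=Y-Y'$ and $\bar Z=Z-Z'$. Using first the assumption $f(s,Y'_s,Z'_s)\le f'(s,Y'_s,Z'_s)$ and then splitting $f(s,Y_s,Z_s)-f(s,Y'_s,Z'_s)$ through the point $(Y'_s,Z_s)$, hypotheses (H2) and (H1) give, on the set $\{\bar Y_s>0\}$,
\[
f(s,Y_s,Z_s)-f'(s,Y'_s,Z'_s)\le\mu\bar Y_s+\lambda|\bar Z_s|,
\]
which is exactly the inequality feeding the estimate of Proposition~\ref{prop3.2}.

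For $p>1$ I would then proceed as there. After the exponential change of variables of Proposition~\ref{prop2.1} I may assume $\mu=-\lambda^2/(p-1)$; applying the backward It\^o--Tanaka formula (\ref{eq3.3}) to $g(x)=((x)^+)^p$ and $X=\bar Y$, the cross term $p\lambda((\bar Y_s)^+)^{p-1}|\bar Z_s|$ is absorbed by the second-order term $\frac{p(p-1)}2((\bar Y_s)^+)^{p-2}{\bf 1}_{\{\bar Y_s>0\}}|\bar Z_s|^2$, and since $\bar Y_T=\xi-\xi'\le0$ forces $(\bar Y_T)^+=0$, the analogue of (\ref{eq3.06}) becomes
\[
((\bar Y_t)^+)^p+\frac{p(p-1)}4\int_t^T((\bar Y_s)^+)^{p-2}{\bf 1}_{\{\bar Y_s>0\}}|\bar Z_s|^2\,ds\le-p\int_t^T((\bar Y_s)^+)^{p-1}\bar Z_s\,dW_s.
\]
Exactly as in Proposition~\ref{prop3.2}, the process $M_t=\int_0^t((\bar Y_s)^+)^{p-1}\bar Z_s\,dW_s$ is a uniformly integrable martingale -- here I use $\bar Y\in{\cal S}^p$ together with $\bar Z\in{\cal H}^p$, which follows from Proposition~\ref{prop2.1} applied to both solutions, and the Burkholder--Davis--Gundy and Young inequalities -- so taking expectations yields $E((\bar Y_t)^+)^p=0$, i.e. $Y_t\le Y'_t$.

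For $p\in(0,1]$ this route is not available: when $p<1$ the map $x\mapsto((x)^+)^p$ is not a difference of two convex functions, so (\ref{eq3.1})--(\ref{eq3.3}) cannot be applied to it, and when $p=1$ the second-order term in (\ref{eq3.3}) vanishes and can no longer absorb the $\lambda|\bar Z_s|$ contribution. Here I would instead linearize: by (H1), (H2) and the ordering of the generators one writes
\[
f(s,Y_s,Z_s)-f'(s,Y'_s,Z'_s)=a_s\bar Y_s+\langle b_s,\bar Z_s\rangle+\delta_s,
\]
with $a_s\le\mu$, $b_s$ an $\Rd$-valued process with $|b_s|\le\lambda$, and $\delta_s\le0$. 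Removing the $y$-linear term by the integrating factor $\Gamma_t=\exp(\int_0^ta_s\,ds)$ and the $z$-linear term by the Girsanov change of measure $d\tilde W_s=dW_s-b_s\,ds$ (legitimate since $b$ is bounded, so the density $dQ/dP$ is a martingale with moments of every order), the product $\Gamma_t\bar Y_t$ obeys $d(\Gamma_t\bar Y_t)=-\Gamma_t\delta_t\,dt+\Gamma_t\bar Z_t\,d\tilde W_t$, whence
\[
\Gamma_t\bar Y_t=E_Q\Big[\Gamma_T(\xi-\xi')+\int_t^T\Gamma_s\delta_s\,ds\;\Big|\;{\cal F}_t\Big]\le0,
\]
both terms on the right being nonpositive; as $\Gamma_t>0$ this again gives $Y_t\le Y'_t$.

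The main obstacle is precisely the passage to this last identity, namely showing that $\int_0^\cdot\Gamma_s\bar Z_s\,d\tilde W_s$ is a genuine (uniformly integrable) $Q$-martingale. For $p\le1$ this is delicate, because although $dQ/dP$ has all moments, the quantities $(\int_0^T|\bar Z_s|^2\,ds)^{1/2}$ and $\bar Y^*_T$ are only $p$-integrable under $P$, so H\"older's inequality does not restore their integrability under $Q$. I would resolve this by localizing along $\tau_k=\inf\{t:\int_0^t|\bar Z_s|^2\,ds\ge k\}\wedge T$, establishing the stopped version of the identity at each level $k$ and then letting $k\to\infty$, the limit being controlled through $\bar Y^*_T\in\LL^p$ and Fatou's lemma; an alternative is to approximate $\xi,\xi'$ and the generators by bounded data, invoke the already proved case $p>1$, and pass to the limit using an $\LL^p$-stability estimate for solutions. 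It is this integrability/martingale step, not any of the algebraic manipulations, where the real work lies.
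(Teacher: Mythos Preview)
Your treatment of the case $p>1$ is correct and is precisely the paper's argument: it is literally the computation of Proposition~\ref{prop3.2} with the $dK$--$dK'$ terms deleted, leading to (\ref{eq3.06}) and then $E((\bar Y_t)^+)^p=0$.

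For $p\in(0,1]$ you take a genuinely different route (linearization plus Girsanov), and here there is a real gap. Under (H2) alone the coefficient
\[
a_s=\frac{f(s,Y_s,Z_s)-f(s,Y'_s,Z_s)}{\bar Y_s}
\]
is bounded \emph{above} by $\mu$, but nothing prevents it from being arbitrarily negative: monotonicity is one-sided, and $f$ need not be Lipschitz in $y$. Consequently $\int_0^t a_s\,ds$ may fail to be finite, so that the integrating factor $\Gamma_t=\exp(\int_0^t a_s\,ds)$ is not well defined (or collapses to $0$, in which case $\Gamma_t\bar Y_t\le0$ says nothing about $\bar Y_t$). The classical linearization proof you are invoking relies on a two-sided Lipschitz bound in $y$; with only (H2) it does not go through as written. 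One can try to repair this by replacing $\Gamma_t$ by the deterministic factor $e^{\mu t}$ and applying Tanaka to $(\bar Y)^+$ rather than to $\bar Y$, but then a local-time term appears and the Girsanov step must be redone carefully.

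Even granting the dynamics, the martingale issue you flag is not actually resolved by your proposed localization: after stopping you obtain $E_Q[e^{\mu(t\wedge\tau_k)}(\bar Y_{t\wedge\tau_k})^+]\le E_Q[e^{\mu\tau_k}(\bar Y_{\tau_k})^+]$, and the right-hand side is dominated by $CE_Q[\bar Y^*_T\mathbf{1}_{\{\tau_k<T\}}]$. Since $dQ/dP$ has all moments but $\bar Y^*_T$ is only in $\LL^p(P)$ with $p\le1$, no H\"older pairing gives $\bar Y^*_T\in\LL^1(Q)$, so dominated convergence is unavailable and Fatou goes the wrong way. Your ``alternative'' via approximation by bounded data and $\LL^p$-stability is a legitimate programme, but it needs stability results that the paper has not established at this stage. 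In short, the $p>1$ part is fine and matches the paper; the $p\le1$ part, while well motivated, is still incomplete. (It is fair to add that the paper's own one-line proof does not spell out the $p\le1$ case either, and the corollary is in fact only invoked for $p>1$ in Section~\ref{sec4}.)
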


Note that Corollary \ref{cor3.3} generalizes the comparison result proved in
\cite{pa}  for square-integrable solutions of nonreflected BSDEs.

\nsubsection{Existence and uniqueness of solutions in the case
$p>1$} \label{sec4}

We begin with a general uniqueness result.

\begin{proposition}
\label{corr} If $f$ satisfies (H1), (H2) then there is at most one
solution $(Y,Z,K)$ of (\ref{eq1.1}) such that $Y\in{\cal S}^p$ for
some $p>1$.
\end{proposition}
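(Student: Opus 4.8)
The plan is to obtain uniqueness as a direct consequence of the comparison result of Proposition \ref{prop3.2}, supplemented by the uniqueness of the canonical decomposition of a continuous semimartingale to pin down $Z$ and $K$ once $Y$ is identified. Suppose $(Y,Z,K)$ and $(Y',Z',K')$ are two solutions of (\ref{eq1.1}) for the \emph{same} data $\xi,f,L$, both with first component in ${\cal S}^p$ for some $p>1$.

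First I would show $Y=Y'$. Viewing $(Y',Z',K')$ as a solution for the data $\xi'=\xi$, $f'=f$, $L'=L$, every hypothesis of Proposition \ref{prop3.2} holds as an equality: $\xi\le\xi'$, $L_t\le L'_t$ and $f(t,Y'_t,Z'_t)\le f'(t,Y'_t,Z'_t)$ are all trivially satisfied. Since $Y,Y'\in{\cal S}^p$, Proposition \ref{prop3.2} yields $Y_t\le Y'_t$ for $t\in[0,T]$. Interchanging the roles of the two solutions (which is legitimate precisely because both lie in ${\cal S}^p$) gives the reverse inequality, hence $Y=Y'$.

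Next, with $Y=Y'$ in hand, I would identify $Z$ and $K$. Writing (\ref{eq1.1})${}_1$ in differential form for each solution, subtracting, and using $Y=Y'$ gives
\[
(Z_t-Z'_t)\,dW_t=\big(f(t,Y_t,Z_t)-f(t,Y_t,Z'_t)\big)\,dt+dK_t-dK'_t.
\]
The left-hand side is the differential of the continuous local martingale $N_t=\int_0^t(Z_s-Z'_s)\,dW_s$, whereas the right-hand side is of finite variation: by (H1) the drift is dominated by $\lambda|Z_s-Z'_s|$, which is $ds$-integrable on $[0,T]$ a.s.\ since $\int_0^T|Z_s-Z'_s|^2\,ds<+\infty$ a.s.\ for any solution (indeed $Z,Z'\in{\cal H}^p$ by Proposition \ref{prop2.1}), while $K,K'$ are nondecreasing. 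A continuous local martingale of finite variation is constant, so $N\equiv 0$; therefore $[N]_T=\int_0^T|Z_s-Z'_s|^2\,ds=0$ and $Z=Z'$ in ${\cal H}^p$. The surviving finite-variation identity then reduces to $dK_t=dK'_t$, and since $K_0=K'_0=0$ we conclude $K=K'$.

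The whole argument is short and presents no genuine obstacle; the only points deserving care are the symmetric invocation of the comparison theorem, which relies on both first components belonging to ${\cal S}^p$, and the justification that $N$ is a finite-variation (hence constant) local martingale, which rests on the integrability of $Z$ and $Z'$ guaranteed by Proposition \ref{prop2.1}.
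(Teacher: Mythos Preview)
Your proof is correct and follows exactly the route the paper intends: the paper's own proof is the single line ``Follows from Proposition \ref{prop3.2},'' and your argument simply unpacks this by applying the comparison result symmetrically to obtain $Y=Y'$ and then invoking the uniqueness of the semimartingale decomposition to conclude $Z=Z'$ and $K=K'$.
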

\begin{proof}
Follows from Proposition \ref{prop3.2}.
\end{proof}
\medskip

The problem of existence of solutions is more delicate. In the
present section we will assume additionally that
\begin{enumerate}
\item[(H3)] (a) $E|\xi|^p<+\infty$,

(b) For every $t\in[0,T]$ and $z\in\Rd$, $y\mapsto f(t,y,z)$ is
continuous,

(c) $E(\int_0^T|f(s,0,0)|\,ds)^p<+\infty$,

(d) for every $r>0$, $\int_0^T\sup_{|y|\le r}
|f(s,y,0)-f(s,0,0)|\,ds <+\infty$,
\item[(H4)] (a) $E(L^{+,*}_T)^p<+\infty$,

(b) $E(\int_0^T|f(s,L^{+,*}_s,0)|ds)^p<+\infty$.
\end{enumerate}

From Theorem 4.2 and  Remark 4.3 in \cite{bdhps} one can deduce
that under (H1)--(H3), (H4a) for every $n\in\N$ there exists a
unique solution $Y^n\in{\cal S}^p$, $Z^n\in{\cal H}^p$ of the BSDE
(\ref{eq1.2}).

\begin{proposition}
\label{prop5} Let $f$ satisfy (H1), (H2) and let $(Y^n,Z^n,K^n)$,
$n\in\N$, be a solution of (\ref{eq1.2}). Then for every $p>0$
there exists $C>0$ depending only on $p$ and $\mu,\lambda, T$ such
that for every stopping time $\tau\le T$ and $n\in\N$,
\[
E\big((\int_0^\tau|Z^n_s|^2\,ds)^{p/2}+(K^n_\tau)^p\big)\le C
E\Big((Y^{n,*}_\tau)^p+(L^{+,*}_\tau)^p
+(\int_0^\tau|f(s,L^{+,*}_s,0)|\,ds)^p\Big).
\]
\end{proposition}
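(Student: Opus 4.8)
The plan is to observe that the proof of Proposition \ref{prop2.1} carries over almost verbatim, since the only structural property of the reflecting process $K$ that was actually used there is that the measure $dK$ is carried by a set on which $Y\le L^{+,*}$. First I would record this precisely: in the proof of Proposition \ref{prop2.1} the fact that $K$ increases only on $\{s:Y_s=L_s\}$ entered exactly twice — once in (\ref{eq2.1}) to conclude $\int_0^{\tau_n}(Y_s-L^{+,*}_s)\,dK_s\le0$, and once in (\ref{eq2.3}) to write $K_\tau=\int_0^\tau{\bf 1}_{\{Y_s\le L^{+,*}_s\}}\,dK_s$. Both of these rest only on the weaker statement that the support of $dK$ is contained in $\{s:Y_s\le L^{+,*}_s\}$.

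Next I would check that $K^n$ shares this property. By definition $dK^n_s=n(Y^n_s-L_s)^-\,ds$, so the support of $dK^n$ is contained in $\{s:Y^n_s<L_s\}$. Since $L_s\le L^+_s\le L^{+,*}_s$, on this support one has $Y^n_s<L_s\le L^{+,*}_s$; in particular $Y^n_s-L^{+,*}_s<0$ there, giving $\int_0^{\tau_n}(Y^n_s-L^{+,*}_s)\,dK^n_s\le0$, while $\{Y^n_s<L_s\}\subseteq\{Y^n_s\le L^{+,*}_s\}$ gives $K^n_\tau=\int_0^\tau{\bf 1}_{\{Y^n_s\le L^{+,*}_s\}}\,dK^n_s$. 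Moreover $K^n$ is nondecreasing and continuous with $K^n_0=0$, and from (\ref{eq1.2}) the dynamics $dY^n_s=-f(s,Y^n_s,Z^n_s)\,ds+Z^n_s\,dW_s-dK^n_s$ have exactly the same form as those of the RBSDE solution.

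With these two observations in hand, I would run the computation of Proposition \ref{prop2.1} word for word with $(Y,Z,K)$ replaced by $(Y^n,Z^n,K^n)$. Setting $\tau_n=\inf\{t:\int_0^t|Z^n_s|^2\,ds\ge n\}\wedge\tau$ and applying It\^o's formula to $Y^n-L^{+,*}$ on $[0,\tau_n]$, then using (H1), (H2), the sign of the $dK^n$ term just verified, the Burkholder-Davis-Gundy and Young inequalities, and Fatou's lemma exactly as before, yields the bound on $E(\int_0^\tau|Z^n_s|^2\,ds)^{p/2}$; and the It\^o-Tanaka argument for $g(x)=(x)^-$ applied to $Y^n-L^{+,*}$, together with (H2) in the form $-f(s,Y^n_s,0){\bf 1}_{\{Y^n_s\le L^{+,*}_s\}}\le|f(s,L^{+,*}_s,0)|$, yields the bound on $E(K^n_\tau)^p$. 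There is no genuine obstacle beyond confirming the sign and support facts above; the one point worth flagging is that penalization makes $K^n$ grow on $\{Y^n<L\}$ rather than on $\{Y=L\}$, but since both sets lie inside $\{Y\le L^{+,*}\}$ the estimates are insensitive to this difference, and the resulting constant $C$ depends only on $p,\mu,\lambda,T$ as claimed.
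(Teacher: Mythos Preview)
Your proposal is correct and follows essentially the same approach as the paper: both identify that the only properties of $K$ used in Proposition \ref{prop2.1} are the two support/sign facts (\ref{eq2.1}) and (\ref{eq2.3}), verify the analogous facts for $K^n$ from the penalization formula $dK^n_s=n(Y^n_s-L_s)^-\,ds$, and then invoke the computations of Proposition \ref{prop2.1} verbatim. The paper additionally checks explicitly that the exponential change of variables preserves the penalization structure ($\tilde K^n_t=n\int_0^t(\tilde Y^n_s-\tilde L_s)^-\,ds$), but since your argument only uses the support inclusion $\{Y^n<L\}\subseteq\{Y^n\le L^{+,*}\}$, which is invariant under this change, no gap arises from omitting that detail.
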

\begin{proof} The proof is similar to that of Proposition \ref{prop2.1}.
Set $\tilde Y^n_t=e^{at}Y^n_t$, $\tilde Z^n_t=e^{at}Z^n_t$ and
$\tilde L^n_t=e^{at}L^n_t$, $\tilde \xi=e^{aT}\xi$, $\tilde
f(t,y,z)=e^{at}f(t,e^{-at}y,e^{-at}z)-ay$. Then $(\tilde
Y^n,\tilde Z^n)$ solves the BSDE
\[
\tilde Y^n_t=\tilde \xi+\int_t^T\tilde f(s,\tilde Y^n_s,\tilde
Z^n_s)\,ds-\int_t^T\tilde Z^n_s\,dW_s +\tilde K^n_T-\tilde
K^n_t,\qquad t\in[0,T]
\]
with the penalization term
\[
\tilde K^n_t=\int_0^te^{as}dK^n_s=n\int_0^t(\tilde Y^n_s-\tilde
L_s)^-\,ds\quad t\in[0,T].
\]
Therefore without loss of generality we may assume that $\mu=0$.
Since $K^n$ is increasing only on the set $\{s:Y^n_s<L_s\}$,
\[
\int_0^{t}(Y^n_s-L^{+,*}_s)\,dK^n_s\leq\int_0^{t}(Y_s-L^{+,*}_s){\bf
1}_{\{Y^n_s>L^{+,*}_s\}}\,dK^n_s=0
\]
and
\begin{align*}
K^n_\tau=\int_0^{\tau}{\bf 1}_{\{Y^n_s\leq L^{+,*}_s\}}\,dK^n_s
&\le Y^{n,*}_{\tau}+L^{+,*}_{\tau}+\int_0^{\tau}|f(s,L^{+,*}_s,0)|\,ds\\
&\quad+\lambda T^{1/2}(\int_{0}^{\tau}|Z^n_s|^2\,ds)^{1/2}
+\int_{0}^{\tau}Z^n_s{\bf 1}_{\{Y^n_s\leq L^{+,*}_s\}}\,dW_s.
\end{align*}
To get the desired estimate it suffices now to repeat step by step
arguments from the proof of Proposition \ref{prop2.1}, the only
difference being in using the above estimates involving $K^n$
instead of (\ref{eq2.1}), (\ref{eq2.3}).
\end{proof}

\begin{proposition}
\label{prop6} Let assumptions (H1)--(H4) hold and let
$(Y^n,Z^n,K^n)$ be a solution of (\ref{eq1.2}). Then for every
$p>1$ there exists $C>0$ depending only on $p$ and $\mu,\lambda,
T$ such that for every $n\in\N$,
\begin{align*}
&E\big((Y^{n,*}_T)^p+(\int_0^T|Z^n_s|^2\,ds)^{p/2}+(K^n_T)^p\big)\\
&\qquad\le
CE\Big(|\xi|^p+(L^{+,*}_T)^p+(\int_0^T|f(s,L^{+,*}_s,0)|\,ds)^p\Big).
\end{align*}
\end{proposition}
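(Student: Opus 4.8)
The plan is to obtain the one genuinely new ingredient, a bound on $Y^{n,*}_T$ uniform in $n$, by transferring the argument of Proposition \ref{prop2} to the penalized solutions, and then to feed that bound into Proposition \ref{prop5} to control $Z^n$ and $K^n$ by the same right-hand side. First I would recall that, by the existence result quoted from \cite{bdhps}, for each fixed $n$ we have $Y^n\in{\cal S}^p$ and $Z^n\in{\cal H}^p$, and that $(Y^n,Z^n,K^n)$ solves the non-reflected BSDE (\ref{eq1.2}) with $K^n_t=n\int_0^t(Y^n_s-L_s)^-\,ds$. As at the beginning of the proof of Proposition \ref{prop2.1}, the exponential change of variable lets me assume $\mu=-\lambda^2/(p-1)$ and that $\xi$, $L^{+,*}_T$, $\int_0^T|f(s,L^{+,*}_s,0)|\,ds\in\LL^p$.

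The key point I would exploit is that the $Y^*_T$ estimate of Proposition \ref{prop2} rests solely on the sign condition $\sgn(Y_s-L^{+,*}_s)\,dK_s\le0$, together with (H1), (H2) and the $p$-integrability of $Z$. For the penalization term this condition still holds: $K^n$ increases only on $\{s:Y^n_s<L_s\}$, and since $L_s\le L^{+,*}_s$ we have $Y^n_s-L^{+,*}_s<0$ wherever $dK^n_s>0$, whence
\[
\sgn(Y^n_s-L^{+,*}_s)\,dK^n_s\le0.
\]
Applying the backward It\^o-Tanaka formula (\ref{eq3.3}) to $|Y^n-L^{+,*}|^p$ and repeating verbatim the chain of estimates in the proof of Proposition \ref{prop2}, using (H1), (H2), Young's inequality and the Burkholder-Davis-Gundy inequality, I would obtain a constant $C$ depending only on $p,\mu,\lambda,T$, hence not on $n$, such that
\[
E(Y^{n,*}_T)^p\le CE\Big(|\xi|^p+(L^{+,*}_T)^p+(\int_0^T|f(s,L^{+,*}_s,0)|\,ds)^p\Big).
\]
Here the stochastic integral $M^n_t=\int_0^t|Y^n_s-L^{+,*}_s|^{p-1}\sgn(Y^n_s-L^{+,*}_s)Z^n_s\,dW_s$ is, for each fixed $n$, a uniformly integrable martingale because $Y^n\in{\cal S}^p$ and $Z^n\in{\cal H}^p$; this is precisely what makes the argument of Proposition \ref{prop2} applicable.

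It then remains to combine this with Proposition \ref{prop5}. Taking $\tau=T$ there bounds $E((\int_0^T|Z^n_s|^2\,ds)^{p/2}+(K^n_T)^p)$ by $CE((Y^{n,*}_T)^p+(L^{+,*}_T)^p+(\int_0^T|f(s,L^{+,*}_s,0)|\,ds)^p)$, and substituting the estimate for $E(Y^{n,*}_T)^p$ just obtained controls the right-hand side by the desired data term. Adding the two estimates yields the assertion.

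I expect no serious obstacle: the entire content is the observation that the penalization increment $dK^n$ carries the correct sign against $Y^n-L^{+,*}$, so that the $Y^*$-estimate of Proposition \ref{prop2} transfers without modification. The only point requiring care is the order of the argument, namely that one must know a priori (for fixed $n$) that $Y^n\in{\cal S}^p$ and $Z^n\in{\cal H}^p$ to justify that $M^n$ is a genuine martingale, \emph{before} extracting the $n$-uniform constant; this integrability is supplied by the existence result from \cite{bdhps} and by Proposition \ref{prop5}.
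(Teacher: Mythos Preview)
Your proposal is correct and follows essentially the same approach as the paper: the paper's proof consists precisely of the observation that $|Y^n_s-L^{+,*}_s|^{p-1}\sgn(Y^n_s-L^{+,*}_s)\,dK^n_s\le0$ (since $dK^n_s$ is supported on $\{Y^n_s<L_s\}\subset\{Y^n_s<L^{+,*}_s\}$), after which the It\^o--Tanaka argument of Proposition~\ref{prop2} applied to $|Y^n-L^{+,*}|^p$ yields the $n$-uniform bound on $E(Y^{n,*}_T)^p$, and one concludes via Proposition~\ref{prop5} with $\tau=T$. Your remark about needing $Y^n\in{\cal S}^p$, $Z^n\in{\cal H}^p$ a priori from \cite{bdhps} to justify the martingale property is exactly what the paper notes as well.
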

\begin{proof}
Since
\begin{align*}
&p\int_t^T|Y^n_s-L^{+,*}_s|^{p-1}\sgn(Y^n_s-L^{+,*}_s)\,dK^n_s\\
&\qquad\leq p\int_t^T|Y^n_s-L^{+,*}_s|^{p-1}{\bf
1}_{\{Y^n_s>L^{+,*}_s\}}{\bf 1}_{\{Y^n_s<L_s\}}\,dK^n_s =0,
\end{align*}
applying the It\^o-Tanaka formula to the function $g(x)=|x|^p$ and
the semimartingale $Y^n-L^{+,*}$ we can estimate $E(Y^{n,*}_T)^p$
in much the same way as in Proposition \ref{prop2} (by the results
from \cite{bdhps} we know that $Y^n\in{\cal S}^p$, $n\in\N$).
Therefore the desired result follows from Proposition \ref{prop5}
with $\tau=T$.
\end{proof}

\begin{theorem}
\label{tw1}
Assume that (H1)--(H4)  are satisfied. If $(Y^n,Z^n,K^n)$,
$n\in\N$, is a solution of BSDE (\ref{eq1.2}), then
\[
\|Y^n-Y\|_{{\cal S}^p}\to0,\quad \|Z^n-Z\|_{{\cal H}^p}\to
0,\quad\|K^n-K\|_{{\cal S}^p}\to0,
\]
where $(Y,Z,K)$ is a unique solution of the reflected BSDE
(\ref{eq1.1}) such that $Y\in{\cal S}^p$, $Z\in{\cal H}^p$ and
$K\in {\cal S}^p$.
\end{theorem}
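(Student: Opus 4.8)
The plan is to prove Theorem \ref{tw1} as the culmination of the a priori estimates established in Propositions \ref{prop5} and \ref{prop6}. The penalized BSDEs (\ref{eq1.2}) have unique solutions $(Y^n,Z^n,K^n)$ by the cited results from \cite{bdhps}, and Proposition \ref{prop6} gives a uniform bound on $\|Y^n\|_{{\cal S}^p}+\|Z^n\|_{{\cal H}^p}+\|K^n\|_{{\cal S}^p}$ in terms of the data. The strategy is to show this sequence is Cauchy in the respective Banach spaces (recall $p>1$), identify the limit, and verify it solves the reflected BSDE (\ref{eq1.1}).

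First I would establish monotonicity of $(Y^n)$ in $n$. The penalization coefficient $n$ appears multiplicatively in $K^n_t=n\int_0^t(Y^n_s-L_s)^-\,ds$, so viewing the $n$-th equation as a BSDE with generator $f(s,y,z)+n(y-L_s)^-$ and comparing the generators for consecutive $n$ via Corollary \ref{cor3.3}, one obtains $Y^n_t\le Y^{n+1}_t$ for all $t$. Combined with the uniform ${\cal S}^p$ bound from Proposition \ref{prop6}, the monotone limit $Y_t=\lim_n Y^n_t$ exists and lies in ${\cal S}^p$; by dominated convergence (dominated by $2\sup_n Y^{n,*}_T\in\LL^p$) one upgrades pointwise convergence to $\|Y^n-Y\|_{{\cal S}^p}\to0$.

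Next I would obtain convergence of $(Z^n)$ and $(K^n)$. Applying the It\^o-Tanaka formula (\ref{eq3.3}) to $|Y^n-Y^m|^p$ and running the same computation as in Proposition \ref{prop2}—using (H1), (H2) and the sign of the increments of $K^n-K^m$—yields an estimate of the form
\[
E\Big(\sup_{t\le T}|Y^n_t-Y^m_t|^p+(\int_0^T|Z^n_s-Z^m_s|^2\,ds)^{p/2}\Big)
\le C\,E\sup_{t\le T}|Y^n_t-Y^m_t|^{p},
\]
so the already-established ${\cal S}^p$ convergence of $(Y^n)$ forces $(Z^n)$ to be Cauchy in ${\cal H}^p$, with limit $Z\in{\cal H}^p$. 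Since $K^n_t=Y^n_0-Y^n_t-\int_0^tf(s,Y^n_s,Z^n_s)\,ds+\int_0^tZ^n_s\,dW_s$, convergence of $Y^n$, $Z^n$ (the latter controlling both the Lebesgue integral via (H1) and the stochastic integral via Burkholder-Davis-Gundy) gives $\|K^n-K\|_{{\cal S}^p}\to0$ with $K$ nondecreasing, continuous and $K_0=0$. Passing to the limit in (\ref{eq1.2}) identifies $(Y,Z,K)$ as a solution of the forward relation in (\ref{eq1.1}).

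The hard part will be the last two structural conditions in (\ref{eq1.1}): that $Y\ge L$ and the Skorohod minimality $\int_0^T(Y_t-L_t)\,dK_t=0$. For $Y\ge L$, I would test the penalty term: since $n\int_0^T(Y^n_s-L_s)^-\,ds=K^n_T$ stays bounded in $\LL^p$ uniformly in $n$, the integral $\int_0^T(Y^n_s-L_s)^-\,ds\to0$, and with $Y^n\to Y$ uniformly this yields $(Y_s-L_s)^-=0$, i.e. $Y\ge L$. For minimality, I would use the uniform convergence $Y^n\to Y$ together with $K^n\to K$ to pass to the limit in $\int_0^T(Y^n_s-L_s)\,dK^n_s=-\int_0^T(Y^n_s-L_s)^-\,dK^n_s\le0$ and in the companion inequality, exploiting weak convergence of the measures $dK^n$ to $dK$; establishing this convergence of the reflecting measures rigorously, so that the support condition $\int_0^T(Y_t-L_t)\,dK_t=0$ survives in the limit, is the delicate point. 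Uniqueness of the limit is guaranteed by Proposition \ref{corr}.
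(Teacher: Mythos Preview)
Your overall strategy---monotonicity via comparison, uniform bounds from Proposition \ref{prop6}, then Cauchy estimates---matches the paper's, but there is a genuine gap in the step ``$Y\in{\cal S}^p$ and $\|Y^n-Y\|_{{\cal S}^p}\to0$ by dominated convergence''.

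Membership in ${\cal S}^p$ requires continuity, and a pointwise monotone limit of continuous processes need not be continuous. Dominated convergence for $E((Y^n-Y)^*_T)^p$ would also require $(Y^n-Y)^*_T\to0$ a.s., i.e.\ uniform convergence; Dini's theorem would deliver this only if $Y$ were already known to be continuous. Your dominant $2\sup_nY^{n,*}_T$ is indeed in $\LL^p$ (by monotonicity $\sup_nY^{n,*}_T\le Y^{1,*}_T+\lim_n\sup_t(Y^n_t)^+$, and monotone convergence applies to the second term), but that is not the obstruction. The paper handles the regularity of $Y$ by an elaborate localization (its Step 1): stopping times $\tau_k$ are chosen so that $Y^{n,*}_{\tau_k}$, $L^{+,*}_{\tau_k}$ and $\int_0^{\tau_k}|f(s,L^{+,*}_s,0)|\,ds$ are uniformly bounded, whence by Proposition \ref{prop5} the stopped $Z^n,K^n$ are bounded in $\LL^{p'}$ for some $p'>2$; weakly convergent subsequences are extracted in ${\cal H}^2$, and Peng's monotone limit theorem is invoked to conclude that $Y$ is c\`adl\`ag. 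Only then can one prove $Y\ge L$ (the c\`adl\`ag property is needed to pass from $\int_0^T(Y_s-L_s)^-\,ds=0$ to $(Y_t-L_t)^-=0$) and, via Dini, that $(Y^n-L)^{-,*}_T\to0$ a.s.

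This also undermines your Cauchy estimate for $Z^n$. The cross term $\int_0^T(Y^n_s-Y^m_s)(dK^n_s-dK^m_s)$ does not have a favorable sign: the penalized $K^n$ do not satisfy a Skorohod condition, so ``the sign of the increments of $K^n-K^m$'' does not help. In the paper this term is bounded by $2(Y^n-L)^{-,*}_TK^m_T+2(Y^m-L)^{-,*}_TK^n_T$, which vanishes only because $(Y^n-L)^{-,*}_T\to0$---a fact that must be established \emph{before} the $Z^n$ estimate, not after. Thus the order of your argument is inverted, and the missing regularity step for $Y$ is the crucial ingredient you still need to supply.
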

\begin{proof}
Without loss of generality we may assume that $\mu=0$. Let
$(Y^n,Z^n,K^n)$ be a solution of  (\ref{eq1.2}). By Corollary
\ref{cor3.3}, $Y^n_t\leq Y^{n+1}_t$, $n\in\N$. Therefore for every
$t\in[0,T]$ there exists $Y_t$ such that $Y^n_t\nearrow Y_t$. The
rest of the proof is divided into 3 steps.
\medskip\\
{\em Step 1.} We show that $Y$  is a c\`adl\`ag process. To see
this let us first note that for every $t\in[0,T]$ there exists
$V_t$ such that
\[
0\leq V^n_t=\sup_{s\leq t}(Y^n_s-Y^1_s)\nearrow V_t.
\]
By Fatou's lemma, $Y_t,V_t$ are finite. Indeed, $|Y_t|\leq
V_t+Y^{1,*}_t$, $t\in[0,T]$, and by Proposition \ref{prop6},
\[
E(V_T)\leq \liminf_{n\to\infty}E(V^n_T)\leq2\sup_nE(Y^{n,*}_T)\leq
2\sup_n\|Y^{n,*}_T\|_p<\infty.
\]
Therefore $V$ is a progressively measurable nondecreasing process.
Since the filtration $(\FF_t)_{t\geq0}$ is right-continuous,
setting
\[
V'_t=\inf_{t'>t} V_{t'},\quad t\in[0,T)\quad\mbox{\rm and}\quad
V'_T=V_T
\]
we get a progressively measurable c\`adl\`ag process $V'$.
Obviously  $V_t\leq V'_t$, so $Y^{n,*}_t\leq V'_t+Y^{1,*}_t$, $
t\in[0,T]$, $n\in\N$ . For $k\in\N$ set now
\begin{equation}
\label{eq4.5} \tau_k=\inf\{t;\min(V'_t+Y^{1,*}_t,L^{+,*}_t,
\int_0^t|f(s,L^{+,*}_s,0)|\,ds)>k\}\wedge T.
\end{equation}
Clearly $\tau_k\leq\tau_{k+1}$, $k\in\N$, and $P(\tau_k=
T)\nearrow1$. Since $Y^n$ is a continuous process,
\[
Y^{n,*}_{\tau_k}=Y^{n,*}_{\tau_k-}\le
V'_{\tau_k-}+Y^{1,*}_{\tau_k-}\le \max(k,c),\quad k\in\N,
\]
where $c=\sup_n(Y^n_0)^+$ with the convention that
$Y^{n,*}_{0-}=Y^{n,*}_0$, $V'_{0-}=V'_{0}$ ($c$ is a nonnegative
constant because $Y^n_0$, $n\in\N$, are deterministic and by
Proposition \ref{prop6}, $|Y^n_0|\le CE(|\xi|^p+(L^{+,*}_T)^p
+(\int_0^T|f(s,L^{+,*}_s,0)|\,ds)^p)$ for $n\in\N$). Moreover,
$L^{+,*}_{\tau_k}\leq \max(k,c)$ and
$\int_0^{\tau_k}|f(s,L^{+,*}_s,0)|\,ds\leq k$. Putting $p=p'>2$
and $\tau=\tau_k$ in Proposition \ref{prop5} we get
\begin{equation}
\label{eq4.6} \sup_nE\big((\int_0^{\tau_k}|Z^n_s|^2\,ds)^{p'/2}
+(K^n_{\tau_k})^{p'}\big)\leq 3C\,\max(k,c)^{p'}<+\infty,
\end{equation}
and consequently,
\begin{equation}
\label{eq4.7}
\sup_nE|\int_0^{\tau_k}f(s,Y^n_s,Z^n_s)\,ds|^{p'}<+\infty.
\end{equation}
Since $f$ is  Lipschitz continuous with respect to  $z$,
\[
\int_0^{t}f(s,Y^n_s,Z^n_s)\,ds
=\int_0^{t}h^n_s\,ds+\int_0^{t}f(s,Y^n_s,0)\,ds,\quad t\in[0,T],
\]
where $h^n_s=(f(s,Y^n_s,Z^n_s)-f(s,Y^n_s,0)){\bf
1}_{\{|Z^n_s|>0\}}=C^n_s|Z^n_s|$ and $C^n$ is a one-dimensional
progressively measurable process bounded by $\lambda$. By
(\ref{eq4.6}),
$\sup_nE(\int_0^{\tau_k}(h^n_s)^2\,ds)^{p'/2}\leq+\infty$. Since
the sequences $\{{\bf 1}_{\{\cdot\leq\tau_k\}}h^n\}_{n\in\N}$\,,
$\{{\bf 1}_{\{\cdot\leq\tau_k\}}Z^n\}_{n\in\N}$ are bounded in
${\cal H}^2$, there exist a subsequence $(n')\subset(n)$, a
one-dimensional progressively measurable process $h$ and a
$d$-dimensional progressively measurable process $Z$ such that
${\bf 1}_{\{\cdot\leq\tau_k\}}h^{n'}\rightarrow h$ and ${\bf
1}_{\{\cdot\le\tau_k\}}Z^{n'}\to Z$  weakly in ${\cal H}^2$, i.e.
for any one-dimensional $h'\in{\cal H}^2$,
\begin{equation}
\label{eq4.9} E\int_0^T{\bf 1}_{\{s\leq\tau_k\}}h^{n'}_sh'_s\,ds
\to E\int_0^Th_sh'_s\,ds.
\end{equation}
and for any $d$-dimensional process $Z'\in{\cal H}^2$,
\begin{equation}
\label{eq4.8}
E\int_0^T{\bf 1}_{\{s\leq\tau_k\}}Z^{n'}_sZ'_s\,ds\to
E\int_0^TZ_sZ'_s\,ds,
\end{equation}
From (\ref{eq4.8}) and (\ref{eq4.9}) it follows that $h,Z$ are
equal to $0$ on the set $\{s>\tau_k\}$. Moreover, for every
stopping time $\sigma\leq\tau_k$,
\begin{equation}\label{eq4.10}
\int_0^\sigma h^{n'}_s\,ds\to\int_0^\sigma h_s\,ds, \quad
\int_0^\sigma Z^{n'}_s\,dW_s\to\int_0^\sigma Z_s\,dW_s
\end{equation}
weakly in $\LL^2$. Indeed, in order to prove the first convergence
in (\ref{eq4.10}) let us first observe that replacing $h'$ by
${\bf 1}_{\{s\leq\sigma\}}h'$ in (\ref{eq4.9}) shows that for
every $h'\in{\cal H}^2$,
\[
E\int_0^T{\bf 1}_{\{s\leq\sigma\}}h^{n'}_sh'_s\,ds\rightarrow
E\int_0^T {\bf 1}_{\{s\leq\sigma\}}h_sh'_s\,ds.
\]
Let $Y\in\LL^2$. Then $h'=E(Y|\FF_{\cdot})\in {\cal H}^2$ since
$E\int_0^T|E(Y|\FF_s)|^2ds\le TEY^2<+\infty$. Hence, by
(\ref{eq4.9}) and Fubini's theorem,
\begin{align*}
E(Y\int_0^T{\bf 1}_{\{s\leq\sigma\}}h^{n'}_s\,ds)
&=E(\int_0^TY{\bf 1}_{\{s\leq\sigma\}}h^{n'}_s\,ds
=\int_0^TE(Y{\bf 1}_{\{s\leq\sigma\}}h^{n'}_s)\,ds\\
&=\int_0^TE(E(Y|\FF_s){\bf 1}_{\{s\leq\sigma\}}h^{n'}_s)\,ds\\
&\rightarrow\int_0^TE(E(Y|\FF_s){\bf 1}_{\{s\le\sigma\}}
h_s)\,ds=E(Y\int_0^T{\bf 1}_{\{s\leq\sigma\}} h_s\,ds),
\end{align*}
which means that $\int_0^\sigma h^{n'}_s\,ds
\rightarrow\int_0^\sigma h_s\,ds $ weakly in $\LL^2$. By the
representation theorem, $Y=\int^T_0Z'_s\,dW_s$ for some
$d$-dimensional process $Z'\in{\cal H}^2$. Hence, by
(\ref{eq4.8}),
\[
E(Y\int_0^\sigma Z^{n'}_s\,dW_s)=\int_0^T{\bf
1}_{\{s\leq\sigma\}}Z^{n'}_sZ'_s\,ds\rightarrow \int_0^T{\bf
1}_{\{s\leq\sigma\}}Z_sZ'_s\,ds=E(Y\int_0^\sigma Z_s\,dW_s),
\]
which proves the second convergence in (\ref{eq4.10}). By
(H3b)--(H3d) and the Lebesgue dominated convergence theorem, for
every stopping time $\sigma\leq\tau_k$,
$\int_0^{\sigma}f(s,Y^n_s,0)\,ds\to\int_0^{\sigma}f(s,Y_s,0)\,ds$
$P$-a.s., and by (\ref{eq4.6}) and (\ref{eq4.7}), the last
convergence holds in $\LL^2$, too. Since  $Y^n_\sigma\nearrow
Y_\sigma$ in $\LL^2$ as well, for every stopping time $\sigma\leq
\tau_k$,
\begin{equation}\label{eq4.11}
Y_\sigma=Y_0-\int_0^\sigma f(s,Y_s,0)ds-\int_0^\sigma
h_s\,ds+\int_0^\sigma Z_s\,dW_s-K_\sigma,
\end{equation} where
$K_\sigma $ is a weak limit in $\LL^2$ of $\{K^n_{\sigma}\}$. From
the proof of the  monotone limit theorem for BSDE (see Peng
\cite[Lemma 2.2]{peng}) it follows that $Y$ is c\`adl\`ag and $K$
is nondecreasing c\`adl\`ag on the stochastic interval
$[0,\tau_k]$. Since $P(\tau_k= T)\nearrow1$, it follows that
$P$-almost all trajectories of $Y$ are c\`adl\`ag  on the whole
interval $[0,T]$.
\medskip\\
{\em Step 2.} We  show that $Y_t\geq L_t$, $t\in[0,T]$  and
$(Y^n-L)^{-,*}_T\to0$ $P$-a.s. By (H3a), (H4) and Proposition
\ref{prop6} there is $C>0$ such that
$E(\int_0^T(Y^n_s-L_s)^-ds)^p\leq C/{n^p}$. Hence, by Fatou's
lemma,
\[
E\int_0^T(Y_s-L_s)^-\,ds\le
\liminf_{n\to\infty}E\int_0^T(Y^n_s-L_s)^-\,ds=0,
\]
which implies that $\int_0^T(Y_s-L_s)^-ds=0$. Since $Y-L$ is a
c\`adl\`ag process, $(Y_t-L_t)^-=0$ for $t\in[0,T)$ and hence
$Y_t\geq L_t$ for $t\in[0,T)$. Moreover, $Y_T=Y^n_T=\xi\geq L_T$.
Hence $(Y^n_t-L_t)^-\searrow 0$ for $t\in[0,T]$ and by Dini's
theorem, $(Y^n-L)^{-,*}_T\to0$ $P$-a.s.
\medskip\\
{\em Step 3.} We show that $\{(Y^n,Z^n,K^n)\}_{n\in\N}$ converges
in ${\cal S}^p\times{\cal H}^p\times{\cal S}^p$ to $(Y,Z,K)$,
where  $(Y,Z,K)$ is a  unique solution of (\ref{eq1.1}). Let
$\{\tau_k\}$ be a sequence of stopping times defined in {\em Step
1}. By It\^o's formula, (H1) and (H2) with $\mu=0$,
\begin{align*}
&(Y^n_{t\wedge \tau_k}-Y^m_{t\wedge\tau_k})^2
+\int_{t\wedge\tau_k}^{\tau_k}|Z^n_s-Z^m_s|^2\,ds\\
&\quad=(Y^n_{\tau_k}-Y^m_{\tau_k})^2+
2\int_{t\wedge\tau_k}^{\tau_k}(Y^n_s-Y^m_s)
(f(s,Y^n_s,Z^n_s)-f(s,Y^m_s,Z^m_s))\,ds\\
&\qquad+2\int_{t\wedge\tau_k}^{\tau_k}(Y^n_s-Y^m_s)(dK^n_s-dK^m_s)
-2\int_{t\wedge\tau_k}^{\tau_k}(Y^n_s-Y^m_s)(Z^n_s-Z^m_s)\,dW_s\\
&\quad\le(Y^n_{\tau_k}-Y^m_{\tau_k})^2+
2\lambda\int_{t\wedge\tau_k}^{\tau_k}|Y^n_s-Y^m_s||Z^n_s-Z^m_s|ds\\
&\qquad+2(Y^n-L)^{-,*}_{\tau_k}K^m_{\tau_k}
+2(Y^m-L)^{-,*}_{\tau_k}K^n_{\tau_k}
-2\int_{t\wedge\tau_k}^{\tau_k}(Y^n_s-Y^m_s)(Z^n_s-Z^m_s)\,dW_s\\
&\quad\le(Y^n_{\tau_k}-Y^m_{\tau_k})^2+
2\lambda^2\int_{t\wedge\tau_k}^{\tau_k}(Y^n_s-Y^m_s)^2ds
+2(Y^n-L)^{-,*}_{\tau_k}K^m_{\tau_k}+2(Y^m-L)^{-,*}_{\tau_k}K^n_{\tau_k}\\
&\qquad+\frac12\int_{t\wedge\tau_k}^{\tau_k}|Z^n_s-Z^m_s|^2\,ds
-2\int_{t\wedge\tau_k}^{\tau_k}(Y^n_s-Y^m_s)(Z^n_s-Z^m_s)\,dW_s.
\end{align*}
Hence
\begin{align*}
E\int_{0}^{\tau_k}|Z^n_s-Z^m_s|^2\,ds &\le
2E(Y^n_{\tau_k}-Y^m_{\tau_k})^2
+4\lambda^2E\int_{0}^{\tau_k}(Y^n_s-Y^m_s)^2\,ds\\
&\quad+4E(Y^n-L)^{-,*}_{\tau_k}K^m_{\tau_k}
+4E(Y^m-L)^{-,*}_{\tau_k}K^n_{\tau_k}.
\end{align*}
By Fubini's theorem,
\[
E\int_{0}^{\tau_k}(Y^n_s-Y^m_s)^2ds=\int_0^T E(Y^n_s{\bf
1}_{\{s\le\tau_k\}}-Y^m_s{\bf 1}_{\{s\le\tau_k\}})^2\,ds,
\]
which converges to $0$ as $m,n\to \infty$. By {\em Step 2} and
(\ref{eq4.6}),
\[
E(Y^n-L)^{-,*}_{\tau_k}K^m_{\tau_k}
\le\|(Y^n-L)^{-,*}_{\tau_k}\|_{2}\|K^m_{\tau_k}\|_2
\leq\|(Y^n-L)^{-,*}_{\tau_k}\|_{2}\sup_m\|K^m_{\tau_k}\|_2\,,
\]
which converges to 0 as $n\to\infty$. Similarly,
$E(Y^m-L)^{-,*}_{\tau_k}K^n_{\tau_k}\to0$ as $m\to\infty$. Since
$E(Y^n_{\tau_k}-Y^m_{\tau_k})^2\to0$  as $m,n\to\infty$, it is
clear that $\{{\bf 1}_{\{s\leq\tau_k\}}Z^n_s\}_{n\in\N}$ is a
Cauchy sequence in ${\cal H}^2$. Let $Z^{(k)}$  denote its limit.
By using standard arguments based on the Burkholder-Davis-Gundy
inequality one can show that in fact $E\sup_{t\leq \tau_k}
|Y^n_t-Y^m_t|^2\to0$ as $n,m\to\infty$, which implies that
$\sup_{t\leq \tau_k}|Y^n_t-Y_t|\displaystyle\arrowp0$ (here
$\displaystyle\arrowp$ stands for the convergence in probability
$P$). Since $P(\tau_k=T) \nearrow1$,
\begin{equation}
\label{eq4.12} \sup_{t\leq T}|Y^n_t-Y_t|\arrowp0,
\end{equation}
and consequently, $Y$ has continuous trajectories. Similarly, if
we set $\tau_0=0$, $Z_t=Z^{(k)}_t$, $t\in(\tau_{k-1},\tau_k]$,
$k\in\N$ and $Z_0=0$, then
\begin{equation}
\label{eq4.13}\int_{0}^{T}|Z^n_s-Z_s|^2\,ds\arrowp0.
\end{equation}
To see this let us fix $\varepsilon>0$. By Chebyshev's inequality,
for each $k\in\N$,
\begin{align*}
P(\int_{0}^{T}|Z^n_s-Z_s|^2\,ds>\varepsilon)&\le
P(\int_{0}^{\tau_k}|Z^n_s-Z_s|^2\,ds>\varepsilon,\,T
=\tau_k)+P(T>\tau_k)\\
&\le\varepsilon^{-2}E\int_0^{\tau_k}|Z^n_s-Z^{(k)}_s|^2\,ds
+P(T>\tau_k).
\end{align*}
Since we know that ${\bf 1}_{\{s\le\tau_k\}}(Z^n_s-Z^{(k)}_s)
\rightarrow0$ in ${\cal H}^2$, it follows that
\[
\limsup_{n\to\infty}P(\int_{0}^{T}|Z^n_s-Z_s|^2\,ds>\varepsilon)
\le P(T>\tau_k),
\]
which proves (\ref{eq4.13}) since $P(T>\tau_k)\searrow0$.  By
(H3c) and (H3d), $|f(s,Y^n_s,Z^n_s)|\leq g_k(s)+\lambda|Z^n_s|$,
$s\leq\tau_k$, where  $g_k$ is an integrable function. Hence, by
(H3b) and (\ref{eq4.13}),
\[
\int_0^{t\wedge\tau_k}f(s,Y^n_s,Z^n_s)\,ds
\arrowp\int_0^{t\wedge\tau_k}f(s,Y_s,Z_s)\,ds
\]
for every $k\in\N$. Letting $k\to\infty$ shows that we can omit
$\tau_k$ in the upper limit of integration. From the above  we
deduce that
\[
K^n_t\arrowp
K_t=Y_0-Y_t-\int_0^tf(s,Y_s,Z_s)\,ds+\int_0^tZ_s\,dW_s,\quad
t\in[0,T],
\]
where $K$ is a continuous nondecreasing process such that $K_0=0$.
It is clear that in fact,
\begin{equation}
\label{eq4.14}\sup_{t\leq T}|K^n_t-K_t|\arrowp0.
\end{equation}
By the above and (\ref{eq4.12}),
$0\geq\int_0^T(Y^n_t-L_t)\,dK^n_t\arrowp\int_0^T(Y_t-L_t)\,dK_t$,
which when combined with {\em Step 2} implies  that
$\int_0^T(Y_t-L_t)\,dK_t=0$. Putting together the facts mentioned
above we deduce that $(Y,Z,K)$ is a solution of the reflected BSDE
(\ref{eq1.1}).

In order to complete the proof we have to  show that
$(Y^n,Z^n,K^n)$ converges to $(Y,Z,K)$ in ${\cal S}^p\times{\cal
H}^p\times{\cal S}^p$. To see this let us first observe that by
(\ref{eq4.12})--(\ref{eq4.14}), Proposition \ref{prop6} and
Fatou's lemma, $(Y,Z,K)\in{\cal S}^p\times{\cal H}^p\times{\cal
S}^p$, which together with Proposition \ref{corr} implies that
$(Y,Z,K)$ is a unique solution of (\ref{eq1.1}) in ${\cal
S}^p\times{\cal H}^p\times{\cal S}^p$. Since
\[
\sup_n\sup_{t\leq T}|Y^n_s|\leq\sup_{t\leq T}|Y_s| +\sup_{t\leq T}
|Y^1_s|,
\]
applying the Lebesgue dominated convergence theorem shows that
$\|Y^n-Y\|_{{\cal S}^p}\to0$. Now,  we are going to estimate
 $Z^n-Z$ in the norm of ${\cal H}^p$. By
It\^o's formula,
\begin{align*}
&\int_{0}^{T}|Z^n_s-Z_s|^2\,ds=(Y^n_{0}-Y_{0})^2+
2\int_{0}^{T}(Y^n_s-Y_s)(f(s,Y^n_s,Z^n_s)-f(s,Y_s,Z_s))\,ds\\
&\qquad+2\int_{0}^{T}(Y^n_s-Y_s)(dK^n_s-dK_s)
-2\int_{0}^{T}(Y^n_s-Y_s)(Z^n_s-Z_s)\,dW_s.
\end{align*}
Hence, by (H1) and (H2) with $\mu=0$,
\begin{align*}
\int_{0}^{T}|Z^n_s-Z_s|^2\,ds &\le
2(Y^n_{0}-Y_{0})^2+4\lambda^2E\int_{0}^{T}(Y^n_s-Y_s)^2\,ds\\
&\quad+4(Y^n-L)^{-,*}_{T}K_{T}
+4\int_{0}^{T}(Y^n_s-Y_s)(Z^n_s-Z_s)\,dW_s.
\end{align*}
By {\em Step 2}, $(Y^n-L)^{-,*}_{T}\le
(Y^n-Y)^{*}_{T}+(Y-L)^{-,*}_{T}=(Y^n-Y)^{*}_{T}$. Hence
\begin{align*}
(\int_{0}^{T}|Z^n_s-Z_s|^2\,ds)^{p/2}&\le C((Y^n-Y)^*_T)^p
+2((Y^n-Y)^{*}_{T}K_{T})^{p/2}\\
&\quad+2|\int_{0}^{T}(Y^n_s-Y_s)(Z^n_s-Z_s)\,dW_s|^{p/2}.
\end{align*}
Using the Burkholder-Davis-Gundy inequality we deduce from the
above that
\[
E(\int_{0}^{T}|Z^n_s-Z_s|^2ds)^{p/2}\leq
C'(E(Y^n-Y)^*_T)^p+\|(Y^n-Y)^{*}_{T}\|_{p}\|K_{T}\|_p)\to0.
\]
On the other hand, there is  $C>0$ depending only  on $\lambda$
and $T$ such that
\begin{align*}
&\|\int_0^\cdot (f(s,Y^n,Z^n_s)-f(s,Y_s,Z_s)\,ds\|_{{\cal S}^p} \\
&\qquad\le\|\int_0^\cdot(f(s,Y^n,Z_s)-f(s,Y_s,Z_s)\,ds\|_{{\cal
S}^p} +C\|Z^n-Z\|_{{\cal H}^p}.
\end{align*}
By monotonicity of the mapping $y\mapsto f(s,y,z)$,
\[
\sup_n\sup_{t\leq T}|\int_0^tf(s,Y^n,Z_s)-f(s,Y_s,Z_s)\,ds|\le U,
\]
where $U=\sup_{t\leq T}|\int_0^t (f(s,Y_s,Z_s)\,ds|+\sup_{t\le T}
|\int_0^t(f(s,Y^1_s,Z_s)\,ds|\in\LL^p$. Hence, by the Lebesgue
dominated convergence theorem,
\[
\|\int_0^\cdot(f(s,Y^n,Z_s)-f(s,Y_s,Z_s)\,ds\|_{{\cal S}^p}\to0.
\]
Finally, putting together all the above convergences it is clear
that  $\|K^n-K\|_{{\cal S}^p}\to0$ and the proof of Theorem
\ref{tw1} is complete.
\end{proof}

\begin{remark}
{\rm Let us remark that if $f$ satisfies (H3c) and the general
increasing growth condition considered in \cite{lmx,pa}, i.e.
\begin{equation} \label{eq4.15} |f(t,y,0)|\leq
|f(t,0,0)|+\varphi(|y|),\quad t\in[0,T],\,y\in\R,
\end{equation}
where $\varphi:\Rp\to\Rp$ is a deterministic continuous increasing
function, and if $\varphi(L^{+,*}_T)$ $\in\LL^p$ then condition
(H4b) is satisfied. Moreover, if we assume  (H3c) and that
(\ref{eq4.15}) holds true for some measurable $\varphi:\Rp\to\Rp$
such that $\int_0^T \varphi(L^{+,*}_s)\,ds\in\LL^p$,  then (H4b)
is satisfied and the conclusion of Theorem \ref{tw1} is still in
force. Therefore Theorem \ref{tw1} generalizes and strengthens the
corresponding results of \cite{lmx} proved under condition
(\ref{eq4.15}) in case $p=2$ only. }
\end{remark}

\begin{corollary}\label{cor3}
Under the assumptions of Proposition \ref{prop3.2}, if moreover
$L=L'$ and  $\xi,f,L$ and $\xi'$, $f'$, $L$ satisfy (H3) and (H4),
$dK_s\geq dK'_s$.
\end{corollary}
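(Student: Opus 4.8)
The plan is to realize both solutions as limits of penalized ones and to carry out the comparison at the penalized level, where the reflection is replaced by an explicit increasing term. Since the data $(\xi,f,L)$ and $(\xi',f',L)$ satisfy (H1)--(H4), Theorem \ref{tw1} applies to each problem. Thus $(Y,Z,K)$ is the limit in ${\cal S}^p\times{\cal H}^p\times{\cal S}^p$ of the solutions $(Y^n,Z^n,K^n)$ of the penalized BSDE (\ref{eq1.2}) with $K^n_t=n\int_0^t(Y^n_s-L_s)^-\,ds$, and likewise $(Y',Z',K')$ is the limit of $({Y'}^n,{Z'}^n,{K'}^n)$ with ${K'}^n_t=n\int_0^t({Y'}^n_s-L_s)^-\,ds$. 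In particular $\|K^n-K\|_{{\cal S}^p}\to0$ and $\|{K'}^n-K'\|_{{\cal S}^p}\to0$.

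First I would show that $Y^n_t\le{Y'}^n_t$ for every $t\in[0,T]$ and $n\in\N$. Each penalized equation is a nonreflected BSDE whose generator is $f_n(s,y,z)=f(s,y,z)+n(y-L_s)^-$, respectively $f'_n(s,y,z)=f'(s,y,z)+n(y-L_s)^-$. The penalty $y\mapsto n(y-L_s)^-$ is independent of $z$ and nonincreasing in $y$, so it contributes nonpositively to the monotonicity product $(y-y')\big((y-L_s)^--(y'-L_s)^-\big)$ and leaves the $z$-modulus of continuity unchanged; hence $f_n$ still satisfies (H1) and (H2) with the same $\lambda,\mu$. Because the two penalty terms coincide, the generator domination $f_n(t,{Y'}^n_t,{Z'}^n_t)\le f'_n(t,{Y'}^n_t,{Z'}^n_t)$ reduces to $f(t,{Y'}^n_t,{Z'}^n_t)\le f'(t,{Y'}^n_t,{Z'}^n_t)$, the inequality inherited from the hypotheses of Proposition \ref{prop3.2}. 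Together with $\xi\le\xi'$, Corollary \ref{cor3.3} then yields $Y^n\le{Y'}^n$.

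Next, since $x\mapsto(x-L_s)^-$ is nonincreasing, the inequality $Y^n_s\le{Y'}^n_s$ gives $(Y^n_s-L_s)^-\ge({Y'}^n_s-L_s)^-$ for all $s$, whence $dK^n_s\ge d{K'}^n_s$; equivalently $K^n-{K'}^n$ is nondecreasing for every $n$. Finally I would pass to the limit: by the ${\cal S}^p$ convergences above, along a subsequence $K^n\to K$ and ${K'}^n\to K'$ uniformly on $[0,T]$ $P$-a.s., so for $s\le t$ the inequality $K^n_t-K^n_s\ge{K'}^n_t-{K'}^n_s$ survives in the limit and gives $K_t-K_s\ge K'_t-K'_s$. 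Thus $K-K'$ is nondecreasing, i.e. $dK_s\ge dK'_s$.

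The main obstacle is the second step, namely verifying that Corollary \ref{cor3.3} genuinely applies to the penalized pair: one must check that the penalty leaves (H1), (H2) intact (routine, as it is $z$-independent and monotone in $y$) and that the generator domination of Proposition \ref{prop3.2} can be used along the penalized solutions $({Y'}^n,{Z'}^n)$, which is what the comparison argument consumes. Once $Y^n\le{Y'}^n$ is secured, the monotonicity of the penalization terms and the passage to the limit are straightforward.
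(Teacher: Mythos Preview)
Your proof is correct and follows exactly the paper's approach: approximate both triples by the penalized solutions via Theorem~\ref{tw1}, compare $Y^n\le {Y'}^n$ at the penalized level (the paper cites Proposition~\ref{prop3.2} where you invoke Corollary~\ref{cor3.3}), deduce $K^n_{t_2}-K^n_{t_1}\ge {K'}^n_{t_2}-{K'}^n_{t_1}$ from monotonicity of the penalty, and pass to the limit using the ${\cal S}^p$ convergence. The paper's argument is simply a terser version of yours.
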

\begin{proof}
By Proposition \ref{prop3.2},
\[
K^n_{t_2}-K^n_{t_1}=n\int_{t_1}^{t_2}(Y^n_s-L_s)^-\,ds \geq n
\int_{t_1}^{t_2}(Y'^n_s-L_s)^-\,ds =K'^n_{t_2}-K'^n_{t_1}
\]
for every $n\in\N$ and $0\le t_1\le t_2\le T$. Since
$\|K^n-K\|_{{\cal S}^p}\rightarrow0$ and $\|K'^n-K'\|_{{\cal
S}^p}\rightarrow0$ by Theorem \ref{tw1}, it follows that
$K_{t_2}-K_{t_1}\ge K'_{t_2}-K'_{t_1}$ for every $0\le t_1\le
t_2\le T$, which proves the desired result.
\end{proof}

\begin{remark}
\label{rem4.6} {\rm Of course (H2) is satisfied if $f$ is
Lipschitz continuous with respect to $y$, i.e. when
\begin{equation}\label{eq4.16}
|f(t,y,z)-f(t,y',z)|\leq C|y-y'|,\quad
t\in[0,T],\,y,y'\in\R,\,z\in\Rd
\end{equation}
for some $C\ge0$. Moreover, (\ref{eq4.16}) together with (H3c),
(H4a) imply (H4b).
Therefore conclusions of Theorem \ref{tw1} and Corollary
\ref{cor3} hold true if (H1), (H3a), (H3c), (H4a) and
(\ref{eq4.16}) are satisfied. Thus, Theorem \ref{tw1} and
Corollary \ref{cor3} strengthen the corresponding stability and
comparison results for (\ref{eq1.1}) proved in \cite{hp}, where
(\ref{eq4.16}) is assumed. }
\end{remark}

\nsubsection{$\LL^1$ solutions of reflected BSDEs}
\label{sec5}

Throughout this section we will assume that $p=1$ in  conditions
(H3), (H4).

Let us recall that a process $X$ belongs to the class ${\cal D}$
if the family of random variables $\{X_\sigma; \sigma\,\mbox{\rm
stopping time},\, \sigma\leq T\}$ is uniformly integrable. In
\cite[page 90]{dm} it is observed that the space of continuous
(c\`adl\`ag), adapted processes from ${\cal D}$ is complete under
the norm $\|X\|_{\cal D}=\sup\{E|X_\sigma|; \sigma\,\mbox{\rm
stopping time, }\sigma\leq T\}$.

First we consider the case where $f$ does not depend on  $z$.

\begin{proposition}
\label{prop7} Assume that $f$ satisfies (H2) and does not depend
on $z$,  and let $(Y,Z,K)$ be a solution of (\ref{eq1.1}) such
that $Y\in{\cal D}$.
\begin{enumerate}
\item[\rm(i)]There exists $C>0$ depending only on
$\mu,T$ such that
\[
\|Y\|_{\cal D} \le C E\big(|\xi|+L^{+,*}_T
+\int_0^T|f(s,L^{+,*}_s)|\,ds\big).
\]
\item[\rm(ii)]For every $\beta\in(0,1)$ there exists $C>0$ depending
only on $\beta,\mu,T$ such that
\[
E\big((Y^*_{T})^\beta +(\int_0^T|Z_s|^2\,ds)^{\beta/2}
+K^\beta_T\big)\leq C \big(E\big(|\xi|+L^{+,*}_T
+\int_0^T|f(s,L^{+,*}_s)|\,ds\big)\big)^\beta.
\]
\end{enumerate}
\end{proposition}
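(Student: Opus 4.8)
The plan is to exploit the fact that $f$ does not depend on $z$, which lets me compare the $\mathbb{L}^1$-type solution with a solution of a \emph{linear} reflected equation whose generator dominates $f$. First I would reduce to the case $\mu=0$ by the exponential change of variables $\tilde Y_t=e^{at}Y_t$, exactly as at the start of the proof of Proposition \ref{prop2.1}; this changes the constant $C$ but not the structure of the estimate, and it converts (H2) into monotonicity with $\mu=0$. I may also assume the right-hand side is finite, else there is nothing to prove. The key input for part (i) is a representation of $Y$ as a conditional expectation. Using the monotonicity $(y)(f(s,y)-f(s,0))\le 0$ together with $\sgn(Y_s-L^{+,*}_s)dK_s\le 0$ (which holds because $K$ increases only on $\{Y_s=L_s\}\subset\{Y_s\le L^{+,*}_s\}$), I would apply the It\^o--Tanaka formula to $|Y-L^{+,*}|$ and localize along the stopping times $\tau_n=\inf\{t:\int_0^t|Z_s|^2ds\ge n\}$ to control the stochastic integral.

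For part (i), starting from any stopping time $\sigma\le T$, the equation gives
\[
Y_\sigma=E\Big(\xi+\int_\sigma^T f(s,Y_s)\,ds+K_T-K_\sigma\,\Big|\,\FF_\sigma\Big),
\]
once the local martingale $\int Z\,dW$ is shown to be a true martingale on $[\sigma,T]$ (again via the localization $\tau_n$ and the class $\mathcal D$ hypothesis, which gives uniform integrability of $\{Y_\sigma\}$). I would then estimate $|Y_\sigma|$ by bounding $\int_\sigma^T f(s,Y_s)\,ds$ using the monotonicity decomposition $f(s,Y_s)=f(s,L^{+,*}_s)+(f(s,Y_s)-f(s,L^{+,*}_s))$, where the second term has a favourable sign against $\sgn(Y_s-L^{+,*}_s)$, and by bounding $K_T$ through Proposition \ref{prop2.1} (applied with $p=1$, but see the obstacle below). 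Taking $\|\cdot\|_{\cal D}=\sup_\sigma E|Y_\sigma|$ should yield the claimed bound after collecting the terms $E|\xi|$, $EL^{+,*}_T$ and $E\int_0^T|f(s,L^{+,*}_s)|\,ds$.

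For part (ii) I would deduce the $\mathcal S^\beta$, $\mathcal H^\beta$ and $K_T^\beta$ estimates from part (i) by a Garsia--Neveu / Lenglart-type domination argument. The natural route is to show that $Y^*_T$, $(\int_0^T|Z_s|^2ds)^{1/2}$ and $K_T$ are each dominated, in the sense of conditional expectations of their increments, by an integrable control of the form $A=|\xi|+L^{+,*}_T+\int_0^T|f(s,L^{+,*}_s)|\,ds$; Lenglart's inequality then gives $E(X^\beta)\le c_\beta(E A)^\beta$ for $\beta\in(0,1)$. Concretely, Proposition \ref{prop2.1} with $p=\beta$ already bounds $E((\int_0^\tau|Z|^2)^{\beta/2}+K_\tau^\beta)$ by $E((Y^*_\tau)^\beta+\dots)$, so the remaining task is an $\mathcal S^\beta$ bound on $Y^*_T$; applying Proposition \ref{prop2.1} at a generic stopping time $\tau$ and feeding in the $\mathcal D$-bound from (i) supplies the domination needed for Lenglart.

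The hard part, I expect, is the passage from the $\mathcal D$-norm control of $Y$ in (i) to the pathwise supremum control $E(Y^*_T)^\beta$ in (ii): the $\mathcal D$-norm only sees $Y$ at stopping times, whereas $Y^*_T$ is the running maximum, and for $\beta$ strictly below $1$ one cannot invoke Doob's $\mathbb{L}^p$ inequality. This is precisely where Lenglart's domination inequality is indispensable, turning a uniform bound on $E|Y_\sigma|$ into a bound on $E(Y^*_T)^\beta$ with a $\beta$-dependent constant, and the care needed is to verify that $Y^*$ is dominated by an \emph{adapted} integrable process in Lenglart's sense rather than merely bounded in expectation at each fixed time.
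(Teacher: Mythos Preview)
Your instinct to apply It\^o--Tanaka to $|Y-L^{+,*}|$ is exactly right and is what the paper does; the trouble is that your detailed plan for (i) then abandons this and works instead with the raw representation $Y_\sigma=E(\xi+\int_\sigma^Tf(s,Y_s)\,ds+K_T-K_\sigma\,|\,\FF_\sigma)$. That route has a genuine gap. First, the monotonicity decomposition $f(s,Y_s)=f(s,L^{+,*}_s)+(f(s,Y_s)-f(s,L^{+,*}_s))$ only helps after multiplication by $\sgn(Y_s-L^{+,*}_s)$; in the raw representation there is no sign factor, so you cannot bound $\int_\sigma^T f(s,Y_s)\,ds$ by $\int_\sigma^T|f(s,L^{+,*}_s)|\,ds$. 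Second, bounding $K_T$ via Proposition~\ref{prop2.1} with $p=1$ requires $E\,Y^*_T<\infty$, which you do not have (class~$\mathcal D$ does not give this), and in any case produces $EK_T\le C\,E(Y^*_T+\cdots)$, which is circular when fed back into $E|Y_\sigma|$.

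The paper's argument avoids both problems simultaneously: applying It\^o--Tanaka (formula (\ref{eq3.2})) to $|Y-L^{+,*}|$ on $[\sigma\wedge\tau_n,\tau_n]$ inserts the factor $\sgn(Y_s-L^{+,*}_s)$ in front of $f(s,Y_s)$, $dK_s$ and $dL^{+,*}_s$. Then (H2) gives $\sgn(Y_s-L^{+,*}_s)f(s,Y_s)\le|f(s,L^{+,*}_s)|$, and $\sgn(Y_s-L^{+,*}_s)\,dK_s\le 0$ makes the $K$ contribution disappear entirely, so no separate estimate on $K_T$ is needed. Conditioning on $\FF_{\sigma\wedge\tau_n}$ and passing to the limit (using $Y\in\mathcal D$) yields the pointwise bound
\[
|Y_\sigma|\le E\Big(|\xi|+2L^{+,*}_T+\int_0^T|f(s,L^{+,*}_s)|\,ds\,\Big|\,\FF_\sigma\Big),
\]
which is strictly stronger than a bound on $\|Y\|_{\mathcal D}$ and is exactly the domination you need for (ii). Your plan for (ii) is then correct: this conditional bound plus \cite[Lemma~6.1]{bdhps} (which is the Lenglart-type inequality you have in mind) gives $E(Y^*_T)^\beta\le(1-\beta)^{-1}(EA)^\beta$, and Proposition~\ref{prop2.1} with $p=\beta$ handles $Z$ and $K$.
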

\begin{proof} We may and will assume that $\mu=0$. Let
$\tau_n=\inf\{t;\int_0^t|Z_s|^2ds\geq n\}\wedge T$, $n\in\N$. By
(\ref{eq3.2}),
\begin{align}
\label{eq5.1}
&|Y_{\sigma\wedge\tau_n}-L^{+,*}_{\sigma\wedge\tau_n}|+\tilde
L^0_{\sigma\wedge\tau_n}(Y-L^{+,*})=|Y_{\tau_n}-L^{+,*}_{\tau_n}|
+\int_{\sigma\wedge\tau_n}^{\tau_n}\sgn(Y_s-L^{+,*}_s)f(s,Y_s)\,ds
\nonumber\\
&\qquad+\int_{\sigma\wedge\tau_n}^{\tau_n}
\sgn(Y_s-L^{+,*}_s)(dK_s+dL^{+,*}_s)
-\int_{\sigma\wedge\tau_n}^{\tau_n}\sgn(Y_s-L^{+,*}_s)Z_s\,dW_s.
\end{align}
Since
\[
\int_{\sigma\wedge\tau_n}^{\tau_n}\sgn(Y_s-L^{+,*}_s)(dK_s+dL^{+,*}_s)
\le L^{+,*}_{\tau_n}- L^{+,*}_{\sigma\wedge\tau_n}
\]
and, by (H2),
\[
\int_{\sigma\wedge\tau_n}^{\tau_n}\sgn(Y_s-L^{+,*}_s)f(s,Y_s)\,ds
\le\int_{\sigma\wedge\tau_n}^{\tau_n}|f(s,L^{+,*}_s)|\,ds,
\]
it follows from (\ref{eq5.1}) that
\[
|Y_{\sigma\wedge\tau_n}|\leq |Y_{\tau_n}| +2L^{+,*}_{\tau_n}
+\int_{\sigma\wedge\tau_n}^{\tau_n}|f(s,L^{+,*}_s)|\,ds
-\int_{\sigma\wedge\tau_n}^{\tau_n}\sgn(Y_s-L^{+,*}_s) Z_s\,dW_s.
\]
Conditioning with respect to ${\cal F}_{\sigma\wedge\tau}$ and
then letting $n\to\infty$ we deduce from the above that
\begin{equation}
\label{eq5.2} |Y_\sigma|\leq E\big(|\xi|+2L^{+,*}_{T}
+\int_{0}^{T}|f(s,L^{+,*}_s)|\,ds|{\cal F}_\sigma\big),
\end{equation}
which implies (i).

By (\ref{eq5.2}) and \cite[Lemma 6.1]{bdhps},
\[
E(Y^*_{T})^\beta\leq (1-\beta)^{-1} \big(E\big(|\xi|+2L^{+,*}_T
+\int_0^T|f(s,L^{+,*}_s)|\,ds\big)\big)^\beta
\]
for every $\beta\in(0,1)$. Therefore (ii) follows from
(\ref{eq2.01}) with $\tau=T$.
\end{proof}
\medskip

let us note that by Proposition \ref{prop7}, if $(X,Z,K)$
satisfies (\ref{eq1.1}) and $Y\in{\cal D}$ then $Z\in\h$ and
$K\in\s$.

\begin{proposition}
\label{prop8} Under the assumptions of Proposition \ref{prop7}
there exists at most one solution  $(Y,Z,K)$ of (\ref{eq1.1}) such
that $Y\in{\cal D}$.
\end{proposition}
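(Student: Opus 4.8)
The plan is to prove $Y=Y'$ first and then read off $Z=Z'$ and $K=K'$ from the equation. Let $(Y,Z,K)$ and $(Y',Z',K')$ be two solutions of (\ref{eq1.1}) with $Y,Y'\in{\cal D}$; by the remark following Proposition \ref{prop7} we have $Z,Z'\in\h$ and $K,K'\in\s$, so every process appearing below is well defined. As at the beginning of the proof of Proposition \ref{prop2.1}, an exponential change of variables reduces the problem to the case $\mu=0$, in which (H2) says that $y\mapsto f(s,y)$ is nonincreasing, i.e. $\sgn(Y_s-Y'_s)(f(s,Y_s)-f(s,Y'_s))\le0$ for all $s$.

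The key step is to apply the It\^o--Tanaka formula (\ref{eq3.2}) with $g(x)=|x|$ to the continuous semimartingale $Y-Y'$ on an interval $[t,\sigma]$. Using $Y_T=Y'_T=\xi$ and discarding the nonnegative symmetric local time, this gives, for every $t\le T$ and every stopping time $\sigma\in[t,T]$,
\[
|Y_t-Y'_t|\le|Y_\sigma-Y'_\sigma| +\int_t^\sigma\sgn(Y_s-Y'_s)(f(s,Y_s)-f(s,Y'_s))\,ds +\int_t^\sigma\sgn(Y_s-Y'_s)(dK_s-dK'_s) -\int_t^\sigma\sgn(Y_s-Y'_s)(Z_s-Z'_s)\,dW_s.
\]
The drift term is nonpositive by monotonicity. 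The decisive point is the reflection term: since $dK$ charges only $\{Y=L\}$, where $Y=L\le Y'$ forces $\sgn(Y-Y')\le0$, while $dK'$ charges only $\{Y'=L\}$, where $Y'=L\le Y$ forces $\sgn(Y-Y')\ge0$, both $\int_t^\sigma\sgn(Y_s-Y'_s)\,dK_s$ and $-\int_t^\sigma\sgn(Y_s-Y'_s)\,dK'_s$ are nonpositive. Hence
\[
|Y_t-Y'_t|\le|Y_\sigma-Y'_\sigma|-\int_t^\sigma\sgn(Y_s-Y'_s)(Z_s-Z'_s)\,dW_s.
\]

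Next I would localize: taking $\sigma=\sigma_n=\inf\{s\ge t;\int_t^s|Z_u-Z'_u|^2\,du\ge n\}\wedge T$ makes the stopped stochastic integral a genuine martingale, so conditioning on ${\cal F}_t$ yields $|Y_t-Y'_t|\le E(|Y_{\sigma_n}-Y'_{\sigma_n}|\,|\,{\cal F}_t)$. Since $Z,Z'\in\h$ we have $\int_0^T|Z_s-Z'_s|^2\,ds<\infty$ a.s., whence $\sigma_n\nearrow T$ and $|Y_{\sigma_n}-Y'_{\sigma_n}|\to|Y_T-Y'_T|=0$ a.s.; as $Y,Y'\in{\cal D}$ this family is uniformly integrable, so $E(|Y_{\sigma_n}-Y'_{\sigma_n}|\,|\,{\cal F}_t)\to0$ in $\LL^1$ and therefore $Y_t=Y'_t$ for every $t$, i.e. $Y=Y'$ by continuity. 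Inserting $Y=Y'$ (so that $f(s,Y_s)=f(s,Y'_s)$) into (\ref{eq1.1}) gives $\int_0^t(Z_s-Z'_s)\,dW_s=K_t-K'_t$ for all $t$; the left side is a continuous local martingale and the right side a continuous process of finite variation, so both must vanish, giving $Z=Z'$ and, using $K_0=K'_0=0$, $K=K'$.

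I expect the main obstacle to be twofold. First, correctly extracting the sign of the two reflection terms from the Skorohod minimality conditions in (\ref{eq1.1}): this is the step that replaces the ${\cal S}^p$-based comparison of Proposition \ref{prop3.2}, which is unavailable in the class-${\cal D}$ setting. Second, justifying the passage to the limit in the stochastic integral using only the class-${\cal D}$ property and $Z-Z'\in\h$, since $Z-Z'$ need not belong to ${\cal H}^1$ and the integral is a priori only a local martingale.
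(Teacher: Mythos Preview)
Your proof is correct and follows essentially the same route as the paper: It\^o--Tanaka for $|Y-Y'|$, the sign inequality $\sgn(Y_s-Y'_s)(dK_s-dK'_s)\le0$ coming from the Skorohod conditions, localization of the stochastic integral, and the class-${\cal D}$ property to pass to the limit. Your write-up is in fact more explicit than the paper's on both of the points you flagged as potential obstacles; the paper simply states the reflection inequality as (\ref{eq4d}) and compresses the localization--limit step into one sentence.
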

\begin{proof} Without loss of generality we may assume that $\mu=0$.
Let $(Y,Z,K)$, $(Y',Z',K')$ be two solutions of (\ref{eq1.1}).
Then from the  It\^o-Tanaka formula, (H2) and the inequality
\begin{equation}
\label{eq4d} \sgn(Y_s-Y'_s)(dK_s-dK'_s)\leq0
\end{equation}
it follows that for every $t\in[0,T]$,
\begin{align*}
&|Y_t-Y'_t|+\tilde L^0_t(Y-Y')
=\int_t^T\sgn(Y_s-Y'_s)(f(s,Y_s)-f(s,Y'_s))\,ds\\
&\qquad+\int_t^T\sgn(Y_s-Y'_s)(dK_s-dK'_s)-\int_t^T\sgn(Y_s-Y'_s)
(Z_s-Z'_s)\,dW_s\\
&\qquad\quad\le -\int_t^T\sgn(Y_s-Y'_s)(Z_s-Z'_s)\,dW_s.
\end{align*}
By using the fact that $Y,Y'\in{\cal D}$, stopping at
$\tau_n=\inf\{t;\int_0^t|Z_s-Z'_s|^2\,ds\geq n\}\wedge T$ and then
letting $n\rightarrow\infty$ we deduce from the above that
$E|Y_t-Y'_t|=0$, $t\in[0,T]$, i.e. $Y=Y'$. Consequently,
$\int_0^t(Z_s-Z'_s)\,dW_s=(K_t-K'_t)$, $t\in[0,T]$, which implies
that $Z=Z'$ and $K=K'$.
\end{proof}

\begin{proposition}
\label{prop9} Let $(Y,Z,K)$ be a solution of (\ref{eq1.1}) with
$f$ not depending on $z$ and satisfying (H2), and let $(Y',Z',K')$
be a solution of (\ref{eq1.1}) with data $\xi'$, $f'$, $L'$ such
that $\xi\leq\xi'$, $f'$ does not depend on $z$, $f(t,Y'_t)\leq
f'(t,Y'_t)$ and $L_t\leq L'_t$, $t\in[0,T]$. If $Y,Y'\in{\cal D}$
then $Y_t\leq Y'_t$, $t\in[0,T]$.
\end{proposition}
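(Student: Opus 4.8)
This is the class-$\cal D$ (i.e. $\LL^1$) counterpart of the comparison result of Proposition \ref{prop3.2}, so the plan is to mimic that proof, replacing the function $((\cdot)^+)^p$ by $(\cdot)^+$ and the It\^o--Tanaka formula (\ref{eq3.3}) by the Tanaka formula (cf. (\ref{eq3.2})) with its nonnegative local time term. As at the beginning of the proof of Proposition \ref{prop2.1}, an exponential change of variables $\tilde Y_t=e^{at}Y_t$, $\tilde Y'_t=e^{at}Y'_t$ together with the transformed data $\tilde\xi,\tilde\xi',\tilde f,\tilde f',\tilde L,\tilde L'$ reduces the statement to the case $\mu=0$ in (H2): one checks that $\xi\le\xi'$, $L\le L'$, $f(t,Y'_t)\le f'(t,Y'_t)$ and membership in $\cal D$ are all preserved (since $e^{at}>0$ and is bounded on $[0,T]$), and that $Y\le Y'$ for the transformed pair is equivalent to $Y\le Y'$ for the original one. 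Thus I assume $\mu=0$ throughout, and I also recall from the remark following Proposition \ref{prop7} that $Z,Z'\in\h$, so $\int_0^T|Z_s-Z'_s|^2\,ds<+\infty$ $P$-a.s.

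Set $U=Y-Y'$, so that $dU_s=-(f(s,Y_s)-f'(s,Y'_s))\,ds+(Z_s-Z'_s)\,dW_s-(dK_s-dK'_s)$ and $U_T=\xi-\xi'\le0$. Applying the It\^o--Tanaka formula to $x\mapsto x^+$ and the continuous semimartingale $U$, in the one-sided (Tanaka--Meyer) form whose integrand is ${\bf 1}_{\{U_s>0\}}$ and whose local time is nonnegative, and dropping the (favourably signed) local time term, I expect to arrive, for $\tau_n=\inf\{t;\int_0^t|Z_s-Z'_s|^2\,ds\ge n\}\wedge T$, at
\[
(U_{t\wedge\tau_n})^+\le(U_{\tau_n})^+ +\int_{t\wedge\tau_n}^{\tau_n}{\bf 1}_{\{U_s>0\}}(f(s,Y_s)-f'(s,Y'_s))\,ds +\int_{t\wedge\tau_n}^{\tau_n}{\bf 1}_{\{U_s>0\}}(dK_s-dK'_s) -\int_{t\wedge\tau_n}^{\tau_n}{\bf 1}_{\{U_s>0\}}(Z_s-Z'_s)\,dW_s.
\]
This is the $p=1$ analogue of (\ref{eq3.06}).

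The crux is the sign analysis of the drift and reflection terms, and this is where I expect the main difficulty. For the drift: on $\{U_s>0\}$, (H2) with $\mu=0$ gives $f(s,Y_s)-f(s,Y'_s)\le0$, while $f(s,Y'_s)-f'(s,Y'_s)\le0$ by hypothesis, so $f(s,Y_s)-f'(s,Y'_s)\le0$ there and the drift integral is $\le0$. For the reflection term, the key observation generalizing (\ref{eq4d}) is that on the support of $dK$ we have $Y_s=L_s\le L'_s\le Y'_s$, hence $U_s\le0$; therefore ${\bf 1}_{\{U_s>0\}}\,dK_s=0$, while $-{\bf 1}_{\{U_s>0\}}\,dK'_s\le0$ because $K'$ is nondecreasing, so the $dK$-integral is $\le0$ as well. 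It is precisely to obtain the clean factor ${\bf 1}_{\{U_s>0\}}$ here (and so avoid a spurious $\frac12{\bf 1}_{\{U_s=0\}}\,dK_s$ boundary contribution of the wrong sign) that I use the one-sided rather than the symmetric Tanaka formula.

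With both the drift and the $dK$ terms discarded, the inequality reduces to $(U_{t\wedge\tau_n})^+\le(U_{\tau_n})^+-\int_{t\wedge\tau_n}^{\tau_n}{\bf 1}_{\{U_s>0\}}(Z_s-Z'_s)\,dW_s$. Since $\int_0^{\tau_n}|Z_s-Z'_s|^2\,ds\le n$, the stopped stochastic integral is an $\LL^2$ martingale, so taking expectations kills it and gives $E(U_{t\wedge\tau_n})^+\le E(U_{\tau_n})^+$. Finally I let $n\to\infty$: because $\int_0^T|Z_s-Z'_s|^2\,ds<+\infty$ a.s. we have $\tau_n\nearrow T$, and since $U=Y-Y'\in{\cal D}$ the family $\{(U_{\tau_n})^+\}$ is uniformly integrable; hence $E(U_{\tau_n})^+\to E(U_T)^+=0$ and, by continuity of $U$ and the same uniform integrability, $E(U_{t\wedge\tau_n})^+\to E(U_t)^+$. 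Thus $E(U_t)^+=0$ for every $t$, and by continuity of $U$ (testing over rationals) $(Y_t-Y'_t)^+=0$ for all $t\in[0,T]$ $P$-a.s., i.e. $Y_t\le Y'_t$, $t\in[0,T]$. The only substantive departures from Proposition \ref{prop3.2} are the reflection bookkeeping above and the replacement of $\cal S^p$/$\cal H^p$ integrability by the class-$\cal D$ uniform integrability in this limiting step.
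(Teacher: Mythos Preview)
Your proof is correct and follows the same route as the paper's: apply the Tanaka formula to $(Y-Y')^+$, discard the drift and reflection integrals via (H2), the hypothesis $f(t,Y'_t)\le f'(t,Y'_t)$ and the barrier ordering $L\le L'$, then localize and pass to the limit using the class-$\cal D$ property exactly as in the proof of Proposition \ref{prop8}. One minor point: the remark after Proposition \ref{prop7} does not literally cover $Z'$ because $f'$ is not assumed to satisfy (H2), but $\int_0^T|Z'_s|^2\,ds<+\infty$ a.s. is already implicit in the definition of a solution of (\ref{eq1.1}), so your localization argument goes through unchanged.
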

\begin{proof}
Assume that $\mu=0$ and observe that by (\ref{eq3.1}), (H2) and
(\ref{eq4d}),
\begin{align*}
&(Y_t-Y'_t)^+
+\frac{1}{2}\tilde L^0_t(Y-Y')
=\int_t^T{\bf 1}_{\{Y_s>Y'_s\}}(f(s,Y_s)-f(s,Y'_s))\,ds\\
&\qquad+\int_t^T{\bf 1}_{\{Y_s>Y'_s\}}(dK_s-dK'_s) -\int_t^T{\bf
1}_{\{Y_s>Y'_s\}} (Z_s-Z'_s)\,dW_s\\
&\qquad\quad\leq -\int_t^T{\bf 1}_{\{Y_s>Y'_s\}} (Z_s-Z'_s)\,dW_s.
\end{align*}
From this as in the proof of Proposition \ref{prop8} we deduce
that $E(Y_t-Y'_t)^+=0$, $t\in[0,T]$, i.e. $Y_t\leq Y'_t$,
$t\in[0,T]$.
\end{proof}

\begin{theorem}\label{tw2}
Assume that $f$ does not depend on $z$ and (H2)--(H4) are
satisfied. If $(Y^n,Z^n,K^n)$, $n\in\N$, is a solution of BSDEs
(\ref{eq1.2}) then for every $\beta\in(0,1)$,
\[
\|Y^n-Y\|_{{\cal S}^\beta}\to0,\quad \|Z^n-Z\|_{{\cal H}^\beta}\to
0,\quad \|K^n-K\|_{{\cal S}^\beta}\to0,
\]
where $(Y,Z,K)$ is a unique solution of the reflected BSDEs
(\ref{eq1.1}) such that $Y\in{\cal D}$, $Z\in\h$ and $K\in \s$.
\end{theorem}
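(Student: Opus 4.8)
The plan is to follow the three step scheme of the proof of Theorem \ref{tw1}, replacing the $\LL^p$ estimates used there by the class $\mathcal D$ and $\mathcal S^\beta,\mathcal H^\beta$ estimates of Proposition \ref{prop7}, and using Propositions \ref{prop8}, \ref{prop9} for uniqueness and comparison. As usual I would first reduce to $\mu=0$ by the exponential substitution from the beginning of the proof of Proposition \ref{prop2.1}. Each penalized equation (\ref{eq1.2}) is a non-reflected BSDE with generator $\tilde f^n(s,y)=f(s,y)+n(y-L_s)^-$, which is monotone in $y$ and independent of $z$, so under (H2)--(H4) it has a unique solution with $Y^n\in\mathcal D$, $Z^n\in\h$ by the $\LL^1$ theory of \cite[Section 6]{bdhps}; since $\tilde f^n\le\tilde f^{n+1}$, comparison for non-reflected BSDEs (the $L\equiv-\infty$ case of Proposition \ref{prop9}) gives $Y^n_t\le Y^{n+1}_t$, whence $Y^n_t\nearrow Y_t$ for every $t$. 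Repeating the proof of Proposition \ref{prop7}(i) with $K^n$ in place of $K$ — the penalization term satisfies $\sgn(Y^n_s-L^{+,*}_s)\,dK^n_s\le0$ exactly as the reflection term does — gives $|Y^n_\sigma|\le E(|\xi|+2L^{+,*}_T+\int_0^T|f(s,L^{+,*}_s)|\,ds\,|\,\FF_\sigma)$ uniformly in $n$, so by \cite[Lemma 6.1]{bdhps}, (H3a) and (H4), $\sup_nE(Y^{n,*}_T)^{\beta}<\infty$ and, via Proposition \ref{prop5}, $\sup_n(E(\int_0^T|Z^n_s|^2ds)^{\beta/2}+E(K^n_T)^{\beta})<\infty$ for every $\beta\in(0,1)$. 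In particular $Y\in\mathcal D$.

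Step 1 (c\`adl\`ag regularity of $Y$) is carried out exactly as {\em Step 1} of Theorem \ref{tw1}: I introduce $V_t=\lim_n\sup_{s\le t}(Y^n_s-Y^1_s)$, pass to its c\`adl\`ag modification using right-continuity of $(\FF_t)$, and localize by the stopping times $\tau_k$ of (\ref{eq4.5}). On each $[0,\tau_k]$ all the data are bounded by a constant, so Proposition \ref{prop5} applied with any $p'>2$ provides uniform $\mathcal H^2$ bounds on $\{Z^n\}$ and uniform $\LL^2$ bounds on $\{K^n_{\tau_k}\}$; extracting weak $\mathcal H^2$ limits and invoking Peng's monotone limit theorem \cite[Lemma 2.2]{peng} shows that $Y$ is c\`adl\`ag and the limit $K$ is nondecreasing on $[0,\tau_k]$, hence on $[0,T]$ because $P(\tau_k=T)\nearrow1$. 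This step is in fact simpler than in Theorem \ref{tw1}, since $f$ is independent of $z$ and the decomposition $f(s,Y^n,Z^n)=h^n+f(s,Y^n,0)$ is unnecessary.

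Step 2 ($Y\ge L$ and $(Y^n-L)^{-,*}_T\to0$) is where the only genuine change from the case $p>1$ occurs: I cannot bound $E\int_0^T(Y^n_s-L_s)^-\,ds$ directly, so I use the $\beta$-moment estimate. From $\sup_nE(K^n_T)^{\beta}<\infty$ I get $E(\int_0^T(Y^n_s-L_s)^-\,ds)^{\beta}=n^{-\beta}E(K^n_T)^{\beta}\to0$; since $Y^n\nearrow Y$ the integrals $\int_0^T(Y^n_s-L_s)^-\,ds$ decrease a.s.\ to $\int_0^T(Y_s-L_s)^-\,ds$, which must therefore vanish, giving $Y_t\ge L_t$ on $[0,T)$ (as $Y-L$ is c\`adl\`ag) and $Y_T=\xi\ge L_T$. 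Then $(Y^n_t-L_t)^-\searrow0$ pointwise, and since the limit is the continuous zero function, Dini's theorem yields $(Y^n-L)^{-,*}_T\to0$ a.s. In Step 3 I localize again: on $[0,\tau_k]$, It\^o's formula applied to $(Y^n-Y^m)^2$ together with (H2) (the drift cross term is $\le0$, and there is no $\lambda$-term because $f$ is $z$-independent) reduces the estimate of $E\int_0^{\tau_k}|Z^n_s-Z^m_s|^2ds$ to the two terms $E(Y^m-L)^{-,*}_{\tau_k}K^n_{\tau_k}$ and $E(Y^n-L)^{-,*}_{\tau_k}K^m_{\tau_k}$, handled by $\int(Y^n_s-Y^m_s)\,dK^n_s\le(Y^m-L)^{-,*}_{\tau_k}K^n_{\tau_k}$ and the support property of $dK^n$. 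On $[0,\tau_k]$ the factors $(Y^\cdot-L)^{-,*}_{\tau_k}$ are bounded by a constant and tend to $0$ a.s.\ by the first part of Step 2, while $\sup_n\|K^n_{\tau_k}\|_2<\infty$, so $\{{\bf 1}_{\{\cdot\le\tau_k\}}Z^n\}$ is Cauchy in $\mathcal H^2$ and defines $Z$ on $(\tau_{k-1},\tau_k]$. As in (\ref{eq4.12})--(\ref{eq4.14}) this yields, after letting $k\to\infty$, convergence in probability of $\sup_{t\le T}|Y^n_t-Y_t|$, $\int_0^T|Z^n_s-Z_s|^2ds$ and $\sup_{t\le T}|K^n_t-K_t|$ to $0$, with $Y$ continuous, $K$ continuous nondecreasing and $K_0=0$; the minimality $\int_0^T(Y_t-L_t)\,dK_t=0$ follows from $0\ge\int_0^T(Y^n_t-L_t)\,dK^n_t\to\int_0^T(Y_t-L_t)\,dK_t\ge0$ and Step 2, so $(Y,Z,K)$ solves (\ref{eq1.1}).

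It remains to upgrade convergence in probability to convergence in $\mathcal S^\beta$, $\mathcal H^\beta$ and $\mathcal S^\beta$ for every $\beta\in(0,1)$. Fixing $\beta'\in(\beta,1)$, the uniform bounds recorded above (for $\beta'$ in place of $\beta$), together with $Y^{n,*}_T\le Y^*_T+Y^{1,*}_T$ and the analogous domination for $Z$ and $K$, show that the $\beta$-th powers $((Y^n-Y)^*_T)^\beta$, $(\int_0^T|Z^n_s-Z_s|^2ds)^{\beta/2}$ and $((K^n-K)^*_T)^\beta$ are uniformly integrable; combined with the convergence in probability from Step 3 this gives the asserted norm convergences. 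Uniqueness of $(Y,Z,K)$ in the class $Y\in\mathcal D$, $Z\in\h$, $K\in\s$ is precisely Proposition \ref{prop8}. The main difficulty is this final passage: one has no global $\LL^1$ (let alone $\LL^2$) control, so all identifications must be made on the localized intervals $[0,\tau_k]$ in $\LL^2$ and then transferred to $[0,T]$ through the uniform $\beta'$-moment bounds and the uniform-integrability argument; tracking the penalization term through Step 2 via $\beta$-moments, rather than the $\LL^1$ bound available when $p>1$, is the technical heart of the proof.
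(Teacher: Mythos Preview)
Your proposal is correct and follows essentially the same route as the paper's proof: reduce to $\mu=0$, obtain the uniform class~$\mathcal D$ bound $|Y^n_\sigma|\le E(|\xi|+2L^{+,*}_T+\int_0^T|f(s,L^{+,*}_s)|\,ds\,|\,\FF_\sigma)$ and the resulting uniform $\mathcal S^\beta\times\mathcal H^\beta\times\mathcal S^\beta$ bounds, then repeat Steps~1--3 of Theorem~\ref{tw1} with the $\beta$-moment bound $E(\int_0^T(Y^n_s-L_s)^-\,ds)^\beta\to0$ replacing the $\LL^p$ bound in Step~2. Two minor remarks: the paper postpones the verification $Y\in\mathcal D$ to the very end (after continuity of $Y$ is established), whereas you assert it early; and your phrase ``analogous domination for $Z$ and $K$'' is not quite accurate, but the uniform $\beta'$-moment bounds you invoke are exactly what is needed for the uniform-integrability argument, so no harm is done.
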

\begin{proof}
We may and will assume that $\mu=0$. By \cite[Proposition
6.4]{bdhps}, for every $n\in\N$ there exists a unique solution
$(Y^n,Z^n,K^n)$  of BSDE (\ref{eq1.2}) such that $Y^n\in{\cal D}$,
$Z^n\in\h$ and $K^n\in \s$. As in the proof of Proposition
\ref{prop6} one can observe that
\[
|Y^n_\sigma|\leq E\big(|\xi|+2L^{+,*}_{T}
+\int_{0}^{T}|f(s,L^{+,*}_s)|\,ds|{\cal F}_\sigma\big),
\]
which implies that for $N>0$,
\[
E(|Y^n_\sigma|{\bf 1}_{\{|Y^n_\sigma|>N\}})\le
E\big((|\xi|+2L^{+,*}_{T}+\int_{0}^{T}|f(s,L^{+,*}_s)|\,ds){\bf
1}_{\{|Y^n_\sigma|>N\}}\big).
\]
Since by Chebyschev's inequality,
$\lim_{N\to\infty}\sup_{\sigma,n}P(|Y^n_\sigma|>N)=0$, it is clear
that
\begin{equation}
\label{eq5.5} \{Y^n_\sigma; \sigma\,\mbox{\rm stopping time},\,
\sigma\leq T,\,n\in\N\}\quad\mbox{\rm is uniformly integrable}.
\end{equation}
Now, as in the proof of Proposition \ref{prop6} one can check that
for every $\beta\in(0,1)$ there exists $C>0$ depending only on
$\mu,\lambda,T$ such that for every $n\in\N$,
\[
E\big((Y^{n,*}_T)^\beta+(\int_0^T|Z^n_s|^2\,ds)^{\beta/2}
+(K^n_T)^\beta\big)\le C
\Big(E(|\xi|+L^{+,*}_T+\int_0^T|f(s,L^{+,*}_s)|\,ds)\Big)^\beta.
\]
Moreover,  arguing as in the proof of Proposition \ref{prop9}
shows that $Y^n_t\leq Y^{n+1}_t$, $n\in\N$, $t\in[0,1]$. Therefore
for every $t\in[0,T]$ there exists $Y_t$ such that $Y^n_t\nearrow
Y_t$. By the same method as in the proof of Theorem \ref{tw1} we
can show that $Y$ is c\`adl\`ag  (the process $V$ need not be
integrable and we only know that $E(V_T)^\beta\leq
\liminf_{n\to\infty}E(V^n_T)^\beta\leq2\sup_nE(Y^{n,*}_T)^\beta$).
By Fatou's lemma,
\[
E(\int_0^T(Y_s-L_s)^-\,ds)^\beta\le
\liminf_{n\to\infty}E(\int_0^T(Y^n_s-L_s)^-\,ds)^\beta=0,
\]
which  implies that  $Y_t\geq L_t$, $t\in[0,T]$, and
$(Y^n-L)^{-,*}_T\to0$ $P$-a.s. As in the proof of Theorem
\ref{tw1} we also show that $\|Y^n-Y\|_{{\cal S}^\beta}\to0$,
$\|Z^n-Z\|_{{\cal H}^\beta}\to 0$ and $\|K^n-K\|_{{\cal
S}^\beta}\to0$, where $(Y,Z,K)$ is a solution of (\ref{eq1.1})
such that $Y, K\in\s$ and $Z\in\h$. In order to complete the proof
we have to check that $Y\in {\cal D}$, but this is an easy
consequence of (\ref{eq5.5}).
\end{proof}
\medskip

The following corollary may be proved in much the same way as
Corollary \ref{cor3}.
\begin{corollary}
\label{cor5.5} Under the assumptions of Proposition \ref{prop9},
if moreover $L=L'$ and $\xi,f,L$ and $\xi'$, $f',L$ satisfy (H3)
and (H4), $dK_s\ge dK'_s$.
\end{corollary}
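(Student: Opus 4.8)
The plan is to follow the proof of Corollary \ref{cor3}, replacing the $\mathcal{S}^p$ convergence with $p>1$ used there by the ${\cal S}^\beta$ convergence with $\beta\in(0,1)$ furnished by Theorem \ref{tw2}. First I would introduce the penalized solutions $(Y^n,Z^n,K^n)$ and $(Y'^n,Z'^n,K'^n)$ of the BSDE (\ref{eq1.2}) corresponding to the data $(\xi,f,L)$ and $(\xi',f',L)$. Applying Theorem \ref{tw2} to each set of data (both of which satisfy (H2)--(H4)), these converge to $(Y,Z,K)$ and $(Y',Z',K')$ in ${\cal S}^\beta\times{\cal H}^\beta\times{\cal S}^\beta$ for every $\beta\in(0,1)$. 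Since $L=L'$, the two penalized equations carry the same penalty term, so $K^n_t=n\int_0^t(Y^n_s-L_s)^-\,ds$ and $K'^n_t=n\int_0^t(Y'^n_s-L_s)^-\,ds$.

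The key step is to establish the comparison $Y^n_s\le Y'^n_s$ for every $n\in\N$ and $s\in[0,T]$. The penalized equations are nonreflected BSDEs with $z$-independent generators $f(s,y)+n(y-L_s)^-$ and $f'(s,y)+n(y-L_s)^-$; both satisfy the monotonicity condition (H2) (the added map $y\mapsto n(y-L_s)^-$ is nonincreasing, so (H2) is preserved with the same $\mu$), the terminal conditions satisfy $\xi\le\xi'$, and $Y^n,Y'^n\in{\cal D}$. Repeating the argument of Proposition \ref{prop9} for these nonreflected equations then yields $Y^n_s\le Y'^n_s$. Since $x\mapsto(x-L_s)^-$ is nonincreasing, it follows that for all $0\le t_1\le t_2\le T$ and all $n\in\N$,
\[
K^n_{t_2}-K^n_{t_1}=n\int_{t_1}^{t_2}(Y^n_s-L_s)^-\,ds\ge n\int_{t_1}^{t_2}(Y'^n_s-L_s)^-\,ds=K'^n_{t_2}-K'^n_{t_1}.
\]

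Finally I would let $n\to\infty$. Fixing $\beta\in(0,1)$, the convergences $\|K^n-K\|_{{\cal S}^\beta}\to0$ and $\|K'^n-K'\|_{{\cal S}^\beta}\to0$ from Theorem \ref{tw2} give, along a subsequence, uniform convergence of $K^n$ to $K$ and of $K'^n$ to $K'$ on $[0,T]$ almost surely, so the increment inequality passes to the limit: $K_{t_2}-K_{t_1}\ge K'_{t_2}-K'_{t_1}$ for all $0\le t_1\le t_2\le T$, which is exactly $dK_s\ge dK'_s$. The main obstacle is the comparison $Y^n\le Y'^n$ in the ${\cal D}$-class: one must ensure that the generator inequality needed for Proposition \ref{prop9}, namely $f(s,Y'^n_s)\le f'(s,Y'^n_s)$, is available at the penalized approximations $Y'^n$ and not merely at the reflected solution $Y'$. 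This is precisely where the $z$-independence of $f,f'$ and the monotonicity condition (H2) are used, and it is the only point where the argument differs in substance from the passage to the limit, which is otherwise routine.
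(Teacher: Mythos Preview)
Your proposal is correct and follows precisely the approach the paper indicates, namely to mimic the proof of Corollary~\ref{cor3} with the ${\cal S}^p$ convergence of Theorem~\ref{tw1} replaced by the ${\cal S}^\beta$ convergence of Theorem~\ref{tw2}. The concern you raise at the end about needing $f(s,Y'^n_s)\le f'(s,Y'^n_s)$ at the penalized level rather than only at $Y'$ is legitimate, but it applies equally to the paper's own proof of Corollary~\ref{cor3}, so it is not a defect of your adaptation.
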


We now consider reflected BSDEs of the form
\begin{equation}
\label{eq5.7} Y_t=\xi+\int_t^Tf(s,Y_s,V_s)\,ds-\int_t^TZ_s\,dW_s
+K_T-K_t\qquad t\in[0,T]
\end{equation}
and
\begin{equation}
\label{eq5.8}
Y'_t=\xi+\int_t^Tf(s,Y'_s,V'_s)\,ds-\int_t^TZ'_s\,dW_s
+K'_T-K'_t,\qquad t\in[0,T],
\end{equation}
where $V,V'$ are arbitrary progressively measurable processes on
the filtered probability space $(\Omega,{\cal F},({\cal F}_t),P)$.

\begin{proposition}
\label{prop10} Let $f$ satisfy (H2) and let $(Y,Z,K)$,
$(Y',Z',K')$ be  solutions of (\ref{eq5.7}), (\ref{eq5.8}),
respectively, such that $Y,Y'\in{\cal D}$. Then for every $p>1$
there is  $C>0$ depending only on $\mu,T$ such that
\[
\|Y-Y'\|_{{\cal S}^p}+\|Z-Z'\|_{{\cal H}^p}\leq
C\|\int_0^T|f(s,Y_s,V_s)-f(s,Y_s,V'_s)|\,ds\|_p.
\]
\end{proposition}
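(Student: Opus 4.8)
The plan is to reduce to the case $\mu=0$ and then derive the two bounds separately, exploiting that (\ref{eq5.7}) and (\ref{eq5.8}) carry the \emph{same} terminal value $\xi$ and the \emph{same} barrier, so that the difference solves a reflected equation with zero terminal condition whose driver splits into a monotone part and a genuine source. First, exactly the substitution used at the beginning of the proof of Proposition \ref{prop2.1} (with the free constant chosen equal to $\mu$) turns (H2) into the same condition with constant $0$, multiplies $Y-Y'$, $Z-Z'$ and the integrand $f(s,Y_s,V_s)-f(s,Y_s,V'_s)$ by the bounded factor $e^{\mu s}$, and preserves membership in ${\cal D}$ and $\h$; hence it suffices to treat $\mu=0$, at the cost of a constant depending on $\mu$ and $T$. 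Writing $\delta Y=Y-Y'$, $\delta Z=Z-Z'$, $\delta K=K-K'$, I decompose
\[
f(s,Y_s,V_s)-f(s,Y'_s,V'_s)=\underbrace{[f(s,Y_s,V_s)-f(s,Y_s,V'_s)]}_{=:\,\Delta f_s}+\underbrace{[f(s,Y_s,V'_s)-f(s,Y'_s,V'_s)]}_{=:\,g_s},
\]
so that (H2) with $\mu=0$ gives $\sgn(\delta Y_s)g_s\le0$, while the minimality of $K,K'$ against the common barrier yields $\sgn(\delta Y_s)(dK_s-dK'_s)\le0$ (on $\{dK_s>0\}$ one has $Y_s=L_s\le Y'_s$, and symmetrically for $K'$).

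Next I apply the It\^o--Tanaka formula (\ref{eq3.2}) to $|\delta Y|$. Using $\delta Y_T=0$ and discarding the nonpositive local time, monotone and reflection contributions exactly as in the proof of Proposition \ref{prop8} and inequality (\ref{eq4d}), I obtain
\[
|\delta Y_t|\le\int_t^T|\Delta f_s|\,ds-\int_t^T\sgn(\delta Y_s)\,\delta Z_s\,dW_s .
\]
Stopping at $\tau_n=\inf\{t;\int_0^t|\delta Z_s|^2\,ds\ge n\}\wedge T$ makes the stochastic integral a true martingale; conditioning on $\FF_{t\wedge\tau_n}$ and letting $n\to\infty$ (here $Y,Y'\in{\cal D}$ supplies the uniform integrability, precisely as in the passage to (\ref{eq5.2})) gives the pointwise bound $|\delta Y_t|\le E\big(\int_0^T|\Delta f_s|\,ds\,\big|\,\FF_t\big)$.

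From this the ${\cal S}^p$ estimate is immediate: the right-hand side is a martingale, so for $p>1$ Doob's maximal inequality yields $\|Y-Y'\|_{{\cal S}^p}\le\frac{p}{p-1}\|\int_0^T|\Delta f_s|\,ds\|_p$, which in particular shows $\delta Y\in{\cal S}^p$. For the ${\cal H}^p$ estimate I apply It\^o's formula to $|\delta Y|^2$ on $[0,\tau_n]$; discarding once more the nonpositive terms $2\delta Y_sg_s\,ds$ and $2\delta Y_s(dK_s-dK'_s)$ and using $\delta Y_T=0$ leads to
\[
\int_0^{\tau_n}|\delta Z_s|^2\,ds\le\big(\sup_{t\le T}|\delta Y_t|\big)^2+2\sup_{t\le T}|\delta Y_t|\int_0^T|\Delta f_s|\,ds-2\int_0^{\tau_n}\delta Y_s\,\delta Z_s\,dW_s .
\]
Raising to the power $p/2$, taking expectations, estimating the stochastic integral by Burkholder--Davis--Gundy together with Young's inequality, and absorbing the resulting term $\tfrac12E(\int_0^{\tau_n}|\delta Z_s|^2\,ds)^{p/2}$ (finite because $\int_0^{\tau_n}|\delta Z_s|^2\,ds\le n$) into the left-hand side, I obtain a bound by $C\big(E(\sup_{t\le T}|\delta Y_t|)^p+E(\int_0^T|\Delta f_s|\,ds)^p\big)$ that is uniform in $n$. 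Fatou's lemma as $n\to\infty$ and the ${\cal S}^p$ bound then give $\|Z-Z'\|_{{\cal H}^p}\le C\|\int_0^T|\Delta f_s|\,ds\|_p$, and adding the two estimates completes the proof.

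The delicate point is not the algebra but the integrability bookkeeping under the weak standing hypotheses $Y,Y'\in{\cal D}$ and $Z,Z'\in\h$: a priori I only know $\int_0^T|\delta Z_s|^2\,ds<\infty$ a.s., so every martingale must be handled through the localizing times $\tau_n$ (which therefore satisfy $\tau_n\nearrow T$), and the conditional-expectation step producing the pointwise bound on $|\delta Y_t|$ must be justified by the uniform integrability coming from ${\cal D}$ before Doob's inequality can be invoked. Once that bound is in force the rest is the familiar BDG-plus-Young absorption; the only subtlety there is to perform the absorption of the $Z$-term after stopping, where $\int_0^{\tau_n}|\delta Z_s|^2\,ds$ is bounded and the term is a priori finite.
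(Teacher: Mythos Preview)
Your proof is correct and follows essentially the same route as the paper's: reduction to $\mu=0$, It\^o--Tanaka on $|\delta Y|$ with the sign inequalities $\sgn(\delta Y_s)g_s\le0$ and $\sgn(\delta Y_s)\,d(K-K')_s\le0$ to reach the pointwise bound $|\delta Y_t|\le E(U\,|\,\FF_t)$ via localization and class ${\cal D}$, then Doob for the ${\cal S}^p$ estimate and It\^o on $|\delta Y|^2$ plus BDG/Young for the ${\cal H}^p$ estimate. One small remark: you list $Z,Z'\in\h$ among the ``weak standing hypotheses'', but the proposition assumes only $Y,Y'\in{\cal D}$; your argument in fact needs only $\int_0^T|\delta Z_s|^2\,ds<\infty$ a.s., which is implicit in the notion of solution, so this does not affect correctness.
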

\begin{proof}
Without loss of generality we may and will assume that $\mu=0$ and
$U=\int_0^T|f(s,Y_s,V_s)-f(s,Y_s,V'_s)|ds\in\LL^p$.  Arguing as in
the proof of Proposition \ref{prop8}, i.e. using the fact that
$Y,Y'\in{\cal D}$, stopping at
$\tau_n=\inf\{t;\int_0^t|Z_s-Z'_s|^2ds\geq n\}\wedge T$ and then
letting $n\rightarrow\infty$ we show that $|Y_t-Y'_t|\le E(U|{\cal
F}_t)$, $ t\in[0,T]$. Hence, by Doob's maximal inequality,
\begin{equation}
\label{eq5.9}
E((Y-Y')^*_T)^p)\leq C_pE(U)^p.
\end{equation}
On the other hand, by the same method as in the proof of
Proposition \ref{prop2.1} one can show that for $n\in\N$ we have
\begin{align*}
&\int_0^{\tau_n}|Z_s-Z'_s|^2\,ds =(Y_{\tau_n}-Y'_{\tau_n})^2
+2\int_0^{\tau_n}(Y_s-Y'_s)(f(s,Y_s,V_s)-f(s,Y'_s,V'_s))\,ds\\
&\qquad-2\int_0^{\tau_n}(Y_s-Y'_s)(Z_s-Z'_s)\,dW_s
+2\int_0^{\tau_n}(Y_s-Y'_s)(dK_s-dK'_s).
\end{align*}
Since $K$ is increasing only on the set $\{s:Y_s=L_s\}$ and $K'$
is increasing only on the set $\{s:Y'_s=L_s\}$,
\[
\int_0^{\tau_n}(Y_s-Y'_s)(dK_s-dK'_s)\le0.
\]
On the other hand, by (H2),
\[
\int_0^{\tau_n}(Y_s-Y'_s)(f(s,Y_s,V'_s)-f(s,Y'_s,V'_s)\,ds)\le0.
\]
By the above,
\begin{align*}
\int_0^{\tau_n}|Z_s-Z'_s|^2\,ds &=(Y_{\tau_n}-Y'_{\tau_n})^2
+2\int_0^{\tau_n}(Y_s-Y'_s)(f(s,Y_s,V_s)-f(s,Y_s,V'_s))\,ds\\
&\quad-2\int_0^{\tau_n}(Y_s-Y'_s)(Z_s-Z'_s)\,dW_s
\end{align*}
and hence
\begin{align*}
&(\int_0^{\tau_n}|Z_s-Z'_s|^2\,ds)^{p/2} \\
&\qquad\le
c_p\big(|Y_{\tau_n}-Y'_{\tau_n}|^p+((Y-Y')_{\tau_n}^*)^{p/2}(U)^{p/2}
+|\int_0^{\tau_n}(Y_s-Y'_s)(Z_s-Z'_s)\,dW_s|^{p/2}\big)\\
&\qquad\le c'_p\big(((Y-Y')_{\tau_n}^*)^{p}+(U)^{p}
+|\int_0^{\tau_n}(Y_s-Y'_s)(Z_s-Z'_s)\,dW_s|^{p/2}\big).
\end{align*}
Using the Burkholder-Davis-Gundy inequality and letting
$n\to\infty$ we conclude from the above that
\[
E(\int_0^T|Z_s-Z'_s|^2\,ds)^{p/2}\leq
C_pE\big(((Y-Y')^*_{T})^p+(U)^p\big),
\]
which together with (\ref{eq5.9}) implies the desired result.
\end{proof}
\medskip

By the arguments from the proof of the above proposition one can
obtain similar estimates for processes on arbitrary intervals
$[t,q]\subset[0,T]$.
\begin{proposition}
\label{cordod} Under the assumptions of Proposition \ref{prop10}
for every $p>1$ there is  $C>0$ depending only on $\mu,T$ such
that for every $0\le t<q\le T$,
\begin{align*}
&\|(Y-Y'){\bf 1}_{[t,q]}\|_{{\cal S}^p}+\|(Z-Z'){\bf
1}_{[t,q]}\|_{{\cal H}^p}\\
&\qquad\le C(\|Y_q-Y'_q\|_p
+\|\int_t^q|f(s,Y_s,V_s)-f(s,Y_s,V'_s)|\,ds\|_p).
\end{align*}
\end{proposition}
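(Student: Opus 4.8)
The plan is to localize the proof of Proposition \ref{prop10} to the subinterval $[t,q]$; the only genuinely new feature is that the terminal datum $Y_T-Y'_T=0$ used there is now replaced by the value $Y_q-Y'_q$ at the right endpoint, and this is precisely the term that appears on the right-hand side. As before I would begin by reducing to the case $\mu=0$ via the exponential substitution from the start of Proposition \ref{prop2.1}, and I would assume without loss of generality that $U:=|Y_q-Y'_q|+\int_t^q|f(s,Y_s,V_s)-f(s,Y_s,V'_s)|\,ds\in\LL^p$. Throughout I would stop at $\tau_n=\inf\{u\ge t:\int_t^u|Z_s-Z'_s|^2\,ds\ge n\}\wedge q$ so that the relevant stochastic integrals become genuine martingales, passing to the limit $n\to\infty$ only at the end.

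For the ${\cal S}^p$ estimate I would apply the It\^o--Tanaka formula (\ref{eq3.2}) to $|Y-Y'|$ on $[s,q]$ for $s\in[t,q]$. Splitting $f(r,Y_r,V_r)-f(r,Y'_r,V'_r)$ as $(f(r,Y_r,V_r)-f(r,Y_r,V'_r))+(f(r,Y_r,V'_r)-f(r,Y'_r,V'_r))$, the second bracket contributes a nonpositive term after multiplication by $\sgn(Y_r-Y'_r)$ thanks to (H2) with $\mu=0$, while the first is dominated by $|f(r,Y_r,V_r)-f(r,Y_r,V'_r)|$; together with $\sgn(Y_r-Y'_r)(dK_r-dK'_r)\le0$ (minimality of $K,K'$ off the common obstacle, exactly as in Proposition \ref{prop10}) this yields, after stopping at $\tau_n$, conditioning on ${\cal F}_s$, and letting $n\to\infty$ using $Y,Y'\in{\cal D}$, the pointwise bound $|Y_s-Y'_s|\le E(U\mid{\cal F}_s)$ for $s\in[t,q]$. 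Doob's maximal inequality then gives $\|(Y-Y'){\bf 1}_{[t,q]}\|_{{\cal S}^p}\le C_p\|U\|_p$, and the triangle inequality applied to $U$ splits this into the two terms in the statement.

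For the ${\cal H}^p$ estimate I would apply It\^o's formula to $(Y-Y')^2$ on $[t,\tau_n]$, keeping $(Y_t-Y'_t)^2+\int_t^{\tau_n}|Z_s-Z'_s|^2\,ds$ on the left. Discarding the nonnegative term $(Y_t-Y'_t)^2$, using $\int(Y_s-Y'_s)(dK_s-dK'_s)\le0$ and the monotonicity bound $\int(Y_s-Y'_s)(f(s,Y_s,V'_s)-f(s,Y'_s,V'_s))\,ds\le0$ as in Proposition \ref{prop10}, and estimating the residual $f$-difference by $2\,(\sup_{s\in[t,q]}|Y_s-Y'_s|)\int_t^q|f(s,Y_s,V_s)-f(s,Y_s,V'_s)|\,ds$, I would obtain after raising to the power $p/2$, taking expectations, and applying the Burkholder--Davis--Gundy and Young inequalities the bound $E(\int_t^q|Z_s-Z'_s|^2\,ds)^{p/2}\le C_pE\big((\sup_{s\in[t,q]}|Y_s-Y'_s|)^p+U^p\big)$. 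Combining this with the ${\cal S}^p$ estimate of the previous paragraph and invoking Fatou's lemma as $n\to\infty$ completes the argument.

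The step I expect to require the most care is the conditional-expectation argument in the ${\cal S}^p$ bound: a priori one only knows $Z-Z'\in\bigcup_{\beta<1}{\cal H}^\beta$, so the stochastic integral is merely a local martingale and the stopping at $\tau_n$ is genuinely needed to make conditioning legitimate; justifying the passage $n\to\infty$ inside $E(\,\cdot\mid{\cal F}_s)$ is exactly where the class-${\cal D}$ hypothesis on $Y$ and $Y'$ enters. Everything else is a routine transcription of the estimates of Proposition \ref{prop10}, with $[0,T]$ replaced by $[t,q]$ and the terminal value $0$ replaced by $Y_q-Y'_q$.
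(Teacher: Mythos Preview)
Your proposal is correct and follows exactly the approach the paper intends: the paper does not give a separate proof but simply remarks that ``by the arguments from the proof of the above proposition one can obtain similar estimates for processes on arbitrary intervals $[t,q]\subset[0,T]$,'' and you have carried out precisely that localization, correctly identifying that the only new ingredient is the nonzero terminal value $Y_q-Y'_q$ replacing $0$. Your care with the stopping and the class-${\cal D}$ hypothesis is appropriate and matches what is implicit in the paper's treatment of Proposition~\ref{prop10}.
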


To deal with generators depending on $z$ we will need the
following condition introduced in \cite{bdhps}:
\begin{enumerate}
\item[(H5)]There exist constants $\gamma\geq0$, $\alpha\in(0,1)$ and a
nonnegative progressively measurable process $g$ such that
$E(\int_0^Tg_s\,ds)<+\infty$ and
\[
|f(t,y,z)-f(t,y,0)|\leq\gamma(g_t+|y|+|z|)^{\alpha},\quad
t\in[0,T], y\in\R, z\in\Rd.
\]
\end{enumerate}

\begin{theorem}
\label{tw5.8} Let assumptions (H1)--(H5) hold. Then there exists a
unique solution $(Y,Z,K)$ of the reflected BSDEs (\ref{eq1.1})
such that $Y\in{\cal D}$, $Z\in\h$ and $K\in \s$.
\end{theorem}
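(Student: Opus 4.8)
The plan is to construct the solution by a Picard-type iteration in which the $z$-argument of the generator is frozen at the control process of the previous step, so that at each stage one solves a reflected BSDE whose generator does not depend on $z$ and is therefore covered by Theorem \ref{tw2}. Set $V^0\equiv0$ and, given a progressively measurable $V^n\in\h$, let $(Y^{n+1},Z^{n+1},K^{n+1})$ be the unique solution in ${\cal D}\times\h\times\s$ of the reflected BSDE with generator $f^n(s,y)=f(s,y,V^n_s)$, barrier $L$ and terminal value $\xi$, and put $V^{n+1}=Z^{n+1}$. First I would check that $f^n$ satisfies (H2)--(H4) with $p=1$: monotonicity and continuity in $y$ are inherited from $f$, while the required $\LL^1$-integrability of $f^n(\cdot,0)$ and $f^n(\cdot,L^{+,*})$ follows from (H3c), (H4b) together with (H5), since the sublinear bound $|f(s,y,V^n_s)-f(s,y,0)|\le\gamma(g_s+|y|+|V^n_s|)^\alpha$ with $\alpha<1$ and $V^n\in\h$ gives $E\int_0^T(g_s+|V^n_s|)^\alpha\,ds<+\infty$ by H\"older's inequality. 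Thus Theorem \ref{tw2} legitimately produces $(Y^{n+1},Z^{n+1},K^{n+1})$ at every step.

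The core of the argument is a contraction estimate for the differences, for which I would invoke Proposition \ref{prop10}. Reading $(Y^{n+1},Z^{n+1},K^{n+1})$ and $(Y^n,Z^n,K^n)$ as solutions of (\ref{eq5.7}) and (\ref{eq5.8}) with frozen processes $V^n$ and $V^{n-1}$, Proposition \ref{prop10} gives, for any $p>1$,
\[
\|Y^{n+1}-Y^n\|_{{\cal S}^p}+\|Z^{n+1}-Z^n\|_{{\cal H}^p}\le C\Big\|\int_0^T|f(s,Y^{n+1}_s,Z^n_s)-f(s,Y^{n+1}_s,Z^{n-1}_s)|\,ds\Big\|_p,
\]
and (H1) bounds the right-hand side by $C\lambda T^{1/2}\|Z^n-Z^{n-1}\|_{{\cal H}^p}$. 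The delicate point is the very first difference: $Z^1$ carries only $\LL^1$-type data and so lies in $\h$ but not in ${\cal H}^p$ for $p>1$. Here (H5) is decisive --- applying Proposition \ref{prop10} to $(Y^2,Z^2,K^2)$ and $(Y^1,Z^1,K^1)$ and bounding $|f(s,Y^2_s,Z^1_s)-f(s,Y^2_s,0)|\le\gamma(g_s+|Y^2_s|+|Z^1_s|)^\alpha$ shows, via H\"older's inequality and $Y^1,Y^2\in\s$, $Z^1\in\h$, that $Z^2-Z^1\in{\cal H}^p$ for every $p\in(1,1/\alpha)$. Iterating the Lipschitz bound keeps all later differences in ${\cal H}^p$, and for $T$ small enough that $C\lambda T^{1/2}<1$ the sequence $(Z^{n+1}-Z^1)$ is Cauchy in ${\cal H}^p$ while $(Y^{n+1}-Y^1)$ is Cauchy in ${\cal S}^p$. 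Passing to the limit in the frozen equations, using the continuity of $f$ in $y$ and the Lipschitz property in $z$, and carrying over the reflection conditions $Y\ge L$, $K$ nondecreasing and $\int_0^T(Y_s-L_s)\,dK_s=0$ from the approximants, yields a solution $(Y,Z,K)$ of (\ref{eq1.1}) on the short interval, with $Y\in{\cal D}$, $Z\in\h$, $K\in\s$ (the limit being $Z^1$ plus an ${\cal H}^p$-correction). For arbitrary $T$ I would subdivide $[0,T]$ into finitely many subintervals of length below the contraction threshold and paste the solutions, using Proposition \ref{cordod} to control the matching at the endpoints.

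For uniqueness I would use the same circle of ideas. If $(Y,Z,K)$ and $(Y',Z',K')$ are two solutions in ${\cal D}\times\h\times\s$, then $Z$ and $Z'$ are fixed points of the freezing map, so Proposition \ref{prop10} applied with $V=Z$, $V'=Z'$ bounds $\|Z-Z'\|_{{\cal H}^p}$ by the $\LL^p$-norm of $\int_0^T|f(s,Y_s,Z_s)-f(s,Y_s,Z'_s)|\,ds$. A priori this difference is only known to be $\LL^1$, but writing it through the value at $z=0$ and using (H5) twice bounds it by $\gamma(g_s+|Y_s|+|Z_s|)^\alpha+\gamma(g_s+|Y_s|+|Z'_s|)^\alpha$, which lies in $\LL^p$ for $p\in(1,1/\alpha)$; this lifts $Z-Z'$ into ${\cal H}^p$. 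Only then does (H1) apply to give $\|Z-Z'\|_{{\cal H}^p}\le C\lambda T^{1/2}\|Z-Z'\|_{{\cal H}^p}$, forcing $Z=Z'$ for small $T$ and hence $Y=Y'$, $K=K'$; the general interval is handled by the same backward pasting via Proposition \ref{cordod}, starting from $Y_T=Y'_T=\xi$.

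The main obstacle, in both existence and uniqueness, is precisely this regularity mismatch: individual solutions enjoy only the $\LL^1$-level regularity $\h$, $\s$, ${\cal D}$, so the contraction cannot be run directly in any ${\cal H}^p$ with $p>1$. The resolution, which is the heart of the proof, is to exploit the strict sublinearity $\alpha<1$ in (H5) to place the $z$-increment of the generator into $\LL^p$ for some $p\in(1,1/\alpha)$ --- even though its individual pieces are merely integrable --- thereby lifting the relevant differences $Z^{n+1}-Z^n$ and $Z-Z'$ into ${\cal H}^p$, where the Lipschitz condition (H1) finally produces the contraction.
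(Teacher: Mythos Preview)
Your proposal is correct and follows essentially the same route as the paper's own proof: a Picard iteration with the $z$-argument frozen (each step solved via Theorem \ref{tw2}), the bootstrap through (H5) to place the first increment $Z^2-Z^1$ into ${\cal H}^p$ for some $p\in(1,1/\alpha)$, and then the (H1)-contraction via Propositions \ref{prop10} and \ref{cordod}, with subdivision of $[0,T]$ when $C\lambda T^{1/2}$ is too large; uniqueness is handled in the same way. The only cosmetic difference is that the paper invokes (H5) at every step to show $Z^{n+1}-Z^n\in{\cal H}^p$, whereas you (also correctly) observe that after the first step (H1) alone propagates this regularity.
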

\begin{proof}
Our method of proof will be adaptation of the  proofs of Theorems
6.2 and 6.3 in \cite{bdhps}. Assume that $\mu=0$. First we  show
that there exist at most one solution. Let $(Y,Z,K)$, $(Y',Z',K')$
be two solutions of (\ref{eq1.1}) such that $Y,Y'\in{\cal D}$ and
$Z,Z'\in\h$. Let $p>1$ be such that $\alpha p<1$. Then
\begin{align*}
&E(\int_0^T|f(s,Y_s,Z_s)-f(s,Y_s,Z'_s)|\,ds)^p\\
&\qquad\le
(2\gamma)^pT^{p-1}E(\int_0^T(g_s+|Y_s|+|Z_s|+|Z'_s|)^{p\alpha}\,ds)
<+\infty
\end{align*}
and hence, by Proposition \ref{prop10},
$E(\int_0^T|Z_s-Z'_s|^2\,ds)^{p/2}<+\infty$. Moreover, by
Proposition \ref{prop10} and (H1),
\begin{align*}
\|Y-Y'\|_{{\cal S}^p}+\|Z-Z'\|_{{\cal H}^p}&\le
C\|\int_0^T|f(s,Y_s,Z_s)-f(s,Y_s,Z'_s)|\,ds\|_p\\
&\le C\lambda T^{1/2}\|Z-Z'\|_{{\cal H}^p}\,.
\end{align*}
Therefore, if $2C\lambda T^{1/2}\le1$ then $\|Y-Y'\|_{{\cal
S}^p}+\frac12\|Z-Z'\|_{{\cal H}^p}\le0$, which implies that $Y=Y'$
and $Z=Z'$, and consequently, $K=K'$. If $2C\lambda T^{1/2}>1$, we
divide the interval $[0,T]$ into a finite number of intervals with
mesh $\delta>0$  such that $2C\lambda\delta^{1/2}\le1$ and prove
uniqueness by induction by using Proposition \ref{cordod} instead
of Proposition \ref{prop10}.

Now we are going to prove existence of solutions. Let $Z^0=0$. By
(H5) and Theorem \ref{tw2}, for each $n\in\N$ there exists a
unique solution $(Y^n,Z^n,K^n)$ of the reflected BSDEs (with
obstacle $L$) of the form
\begin{equation}
\label{eq5.12} Y^{n}_t=\xi+\int_t^Tf(s,Y^{n}_s,Z^{n-1}_s)\,ds
-\int_t^TZ^{n}_s\,dW_s+K^{n}_T-K^{n}_t,\quad t\in[0,T]
\end{equation}
such that $Y^n\in{\cal D}$, $Z^n\in\h$ and $Y^n,K^n\in\s$. Let
$p>1$ be such that $\alpha p<1$. Then by (H5) and Proposition
\ref{prop10},
\begin{align*}
&\|Y^{n+1}-Y^n\|_{{\cal S}^p}+\|Z^{n+1}-Z^n\|_{{\cal H}^p}\le
C\|\int_0^T|f(s,Y^n_s,Z^n_s)-f(s,Y^n_s,Z^{n-1}_s)|\,ds\|_p\\
&\qquad\le2\gamma T^{(p-1)/p}
(E(\int_0^T(g_s+|Y^n_s|+|Z^n_s|+|Z^{n-1}_s|)^{p\alpha}\,ds))^{1/p}
<+\infty.
\end{align*}
Thus, $(Y^{n+1}-Y^{n})\in{\cal S}^p$, $(Z^{n+1}-Z^n)\in{\cal
H}^p$, and hence, by elementary calculations,
$(K^{n+1}-K^n)\in{\cal S}^p$. It follows that $(Y^n-Y^1)\in{\cal
S}^p$, $(Z^n-Z^1)\in{\cal H}^p$ and $(K^n-K^1)\in{\cal S}^p$,
$n\in\N$. As in the proof of uniqueness we first assume that
$2C\lambda T^{1/2}\le1$. By (H1) and Proposition \ref{prop10},
\[
\|Y^{n+1}-Y^n\|_{{\cal S}^p}+\|Z^{n+1}-Z^n\|_{{\cal H}^p}\leq
\frac12\|Z^{n}-Z^{n-1}\|_{{\cal H}^p}
\]
for $n\in\N$. Hence
\[
\|Y^{m}-Y^n\|_{{\cal S}^p}+\|Z^{m}-Z^n\|_{{\cal H}^p}\leq
2(\frac12)^{n-1}\|Z^2-Z^{1}\|_{{\cal H}^p}
\]
for all $m\geq n$. Consequently, $\{(Y^n-Y^1,Z^n-Z^1)\}_{n\in\N}$
is a Cauchy sequence in ${\cal S}^p\times{\cal H}^p$ converging to
some process $(\tilde Y,\tilde Z)$. Since $(Y^1,Z^1)\in{\cal
D}\times\h$, it follows that
\begin{equation}
\label{eq5.09} Y^n\to Y=\tilde Y+Y^1\mbox{ \rm in }{\cal D},\quad
Z^n\to Z=\tilde Z+Z^1\,\,\mbox{\rm in }{ \cal H}^\beta,
\,\beta\in(0,1).
\end{equation}
Using standard arguments one can also check that for every
$\beta\in(0,1)$ the sequence $\{K^n\}$ converges in ${\cal
S}^\beta$ to some nondecreasing continuous process $K$ and that
$(Y,Z,K)$ is a solution of the reflected BSDE (\ref{eq1.1}).

If $2C\lambda T^{1/2}>1$ we consider a partition
$0=t_0<t_1<\dots<t_k=T $ of the interval $[0,T]$ such that
$t_{i}-t_{i-1}\le\delta$, $i=1,\dots,k$, and
$2C\lambda\delta^{1/2}\le1$. By  arguments from the first part of
the proof and Proposition \ref{cordod},
\begin{align*}
&\|(Y^{n+1}-Y^n){\bf 1 }_{[t_{k-1},T]}\|_{{\cal S}^p}
+\|(Z^{n+1}-Z^n){\bf }_{[t_{k-1},T]}\|_{{\cal H}^p}\\
&\qquad\quad\le
C\|\int_{t_{k-1}}^{T}
|f(s,Y^{n}_s,Z^{n}_s)-f(s,Y^n_s,Z^{n-1}_s)|\,ds\|_p\\
& \qquad\quad\le\frac12\|(Z^n-Z^{n-1}){\bf
1}_{[t_{k-1},T]}\|_{{\cal H}^p}
\le(\frac12)^{n-1}\|(Z^2-Z^{1}){\bf 1}_{[t_{k-1},T]}\|_{{\cal
H}^p},
\end{align*}
which implies that
\[
\|(Y^{m}-Y^n){\bf 1}_{[t_{k-1},T]}\|_{{\cal S}^p}
+\|(Z^{m}-Z^n){\bf 1}_{[t_{k-1},T]}\|_{{\cal H}^p} \le
2(\frac12)^{n-1}\|(Z^2-Z^{1}){\bf 1}_{[t_{k-1},T]}\|_{{\cal H}^p}
\]
for all $m\geq n$. Accordingly, $\{(Y^n-Y^1){\bf
1}_{[t_{k-1},T]},(Z^n-Z^1){\bf 1}_{[t_{k-1},T]}\}_{n\in\N}$ is a
Cauchy sequence in ${\cal S}^p\times{\cal H}^p$. Therefore as in
the proof of (\ref{eq5.09}) one can show that there exist
processes $Y^{(k)},Z^{(k)}, K^{(k)}$ such that $ Y^n{\bf
1}_{[t_{k-1},T]}\to Y^{(k)}$ in ${\cal D}$, $Z^n{\bf
1}_{[t_{k-1},T]}\to Z^{(k)}$ in ${ \cal H}^\beta$,
$\beta\in(0,1)$, and  $ (K^n-K^n_{t_{k-1}}){\bf
1}_{[t_{k-1},T]}\to K^{(k)}$ in ${ \cal S}^\beta$,
$\beta\in(0,1)$. Observe that $Y^{(k)}_T=\xi$ and
\begin{equation}
\label{eqdo0} Y^{(k)}_t=Y^{(k)}_{{t_{k-1}}}-\int_{t_{k-1}}^t
f(s,Y^{(k)}_s,Z^{(k)}_s)\,ds+\int_{t_{k-1}}^t
Z^{(k)}_s\,dW_s-K^{(k)}_t,\quad t\in[t_{k-1},T].
\end{equation}
By (H5) and Theorem \ref{tw2}, for each $n\in\N$ there exists a
unique solution $(Y^n,Z^n,K^n)$ of the reflected BSDEs
(\ref{eq5.12}) with $[0,T]$ replaced by $[0,t_{k-1}]$ and $\xi$
replaced by $Y^{(k)}_{t_{k-1}}$. Therefore in the same manner as
before we can see that there exist processes $Y^{(k-1)}$,
$Z^{(k-1)}$,  $K^{(k-1)}$ such that $ Y^n{\bf
1}_{[t_{k-2},t_{k-1}]}\to Y^{(k-1)}$ in ${\cal D}$, $ Z^n{\bf
1}_{[t_{k-2},t_{k-1}]}\to Z^{(k-1)}$  in  ${ \cal H}^\beta$,
$\beta\in(0,1)$,  and  $ (K^n-K^n_{t_{k-2}}){\bf
1}_{[t_{k-2},t_{k-1}]}\to K^{(k-1)}$ in ${ \cal S}^\beta$,
$\beta\in(0,1)$. We continue in this fashion to obtain for
$i=k-1,...,1$ the triple of processes $(Y^{(i)},Z^{(i)},K^{(i)})$
such that $Y^{(i)}_{t_i}=Y^{(i+1)}_{t_i}$  and
\begin{equation}
\label{eqdo1} Y^{(i)}_t=Y^{(i)}_{{t_{i-1}}}-\int_{t_{i-1}}^t
f(s,Y^{(i)}_s,Z^{(i)}_s)\,ds+\int_{t_{i-1}}^t Z^{(i)}_s\,dW_s
-K^{(i)}_t,\quad t\in[t_{i-1},t_i].
\end{equation}
It is clear that for $i=1,\dots,k$,
\begin{equation}\label{eqdo-2} L_t\leq Y^n_t\arrowp Y^{(i)}_t\geq L_t,\quad
t\in[t_{i-1},t_i]\end{equation}  and
\begin{equation}
\label{eqdo-1}0=\int_{t_{i-1}}^{t_i}
(Y^n_t-L_t)\,dK^n_t\arrowp\int^{t_i}_{t_{i-1}}(Y^{(i)}_t-L_t)\,d
K^{(i)}_t=0.
\end{equation}
Set $Y_T=\xi$, $Z_T=0$ and
\[
Y_t=Y^{(i)}_t,\quad Z_t=Z^{(i)}_t,\quad t\in[t_{i-1},t_i),\,\,
i=1,\dots,k,\quad K=\sum_{i=1}^k K^{(i)},
\]
and observe that $Y,Z,K$ are progressively measurable, $Y\in{\cal
D}$, $Z\in{ \cal H}^\beta$, $\beta\in(0,1)$, and $ K\in{ \cal
S}^\beta$, $\beta\in(0,1)$, $K_0=0$. Moreover, the process $Y$ is
continuous, $K$ is continuous and nondecreasing, and by
(\ref{eqdo0}), (\ref{eqdo1})  the triple $(Y,Z,K)$ satisfies the
forward equation
\[
Y_t=Y_{0}-\int_{0}^t
f(s,Y_s,Z_s)\,ds+\int_{0}^t Z_s\,dW_s-K_t,\quad t\in[0,T].
\] Since
$Y_T=\xi$, it satisfies the backward equation(\ref{eq1.1})${}_1$
as well. Finally, by (\ref{eqdo-2}), $Y_t\geq L_t$, $t\in[0,T]$,
whereas by (\ref{eqdo-1}),
\[
\int_0^T(Y_t-L_t)\,dK_t
=\sum_{i=1}^k\int^{t_i}_{t_{i-1}}(Y^{(i)}_t-L_t)\,d
K^{(i)}_t=0,
\]
i.e. (\ref{eq1.1})${}_2$ and (\ref{eq1.1})${}_3$ are satisfied.
Thus, the triple $(Y,Z,K)$ is a solution of the reflected BSDE
(\ref{eq1.1}).
\end{proof}

\begin{remark}
{\rm Similarly to the case $p>1$ (see Remark \ref{rem4.6}), if we
assume that stronger than (H2) condition (\ref{eq4.16}) is
satisfied, then in Theorem \ref{tw2} , Corollary \ref{cor5.5} and
Theorem \ref{tw5.8}  assumption (H4b) may be omitted.}
\end{remark}
{\bf Acknowledgements}\\
Research supported by Polish Ministry of Science and Higher
Education (grant no. N N201 372 436).
\medskip\\
We thank the anonymous referees for careful reading of the first
version of the paper and for valuable comments.

\end{document}